\renewcommand{\H}{\mathcal{H}}
\renewcommand{\div}{\mathrm{div}}
\renewcommand{\epsilon}{\varepsilon}
\renewcommand{\d}{\mathrm d}
\newcommand{\co}{\mathrm{co}}
\newcommand{\tr}{\mathrm{tr}}
\newcommand{\D}{\mathrm{D}}
\newcommand{\To}{\mathbb{T}}
\newcommand{\T}{\mathrm{T}}
\newcommand{\im}{\mathrm{Im}}
\newcommand{\id}{\mathrm{id}}
\newcommand{\dist}{\mathrm {dist}}
\theoremstyle{definition}
\newtheorem{thm}{Theorem}[section]
\newtheorem*{thm*}{Theorem}
\newtheorem{defi}[thm]{Definition}
\newtheorem*{defi*}{Definition}
\newtheorem{lemma}[thm]{Lemma}
\newtheorem*{lemma*}{Lemma}
\newtheorem*{prop*}{Proposition}
\newtheorem*{ex*}{Example}
\newtheorem*{cor*}{Corollary}
\newtheorem{rem}[thm]{Remark}
\newtheorem*{rem*}{Remark}
\newtheorem*{rems*}{Remarks}
\numberwithin{equation}{section}
\begin{document}

\title[Volume preserving mean curvature flows in flat torus]{Volume preserving mean curvature flows near strictly stable sets in flat torus}
\author{Joonas Niinikoski}
\address{University of Jyvaskyla, Department of Mathematics and Statistics
P.O.Box (MaD)FI-40014 University of Jyvaskyla, Finland.}
\email{joonas.k.niinikoski@jyu.fi}
\subjclass[2010]{Primary 53C44. Secondary 35K93}
\keywords{Periodic stability, strictly stable sets, volume preserving mean curvature flow}

\begin{abstract}
In this paper we establish a new stability result for smooth volume preserving mean curvature flows in flat torus \(\To^n\) 
in dimensions \(n=3,4\). The result says roughly that if an initial set is near to a strictly stable 
set in \(\To^n\) in \(H^3\)-sense, then the corresponding flow has infinite lifetime and
converges exponentially fast to a translate of the strictly stable (critical) set in \(W^{2,5}\)-sense.
\end{abstract}

\maketitle
\tableofcontents


\section{Introduction}
A smooth evolution of sets \((E_t)_t\), that is, a smooth flow in \(\mathbb R^n\) is a \emph{volume preserving mean curvature flow} (VMCF), if for every time 
\(t\) the (outer) normal velocity of the flow on \(\partial E_t\) obeys the law
\[
V_t = \bar H_t - H_t,
\]
where \(H_t\) is the (scalar) \emph{mean curvature} on \(\partial E_t\) and \(\bar H_t\) its integral average over \(\partial E_t\). As the name suggests, a VMCF
preserves the volume, which is in the contrast to a classical \emph{mean curvature flow} with a smooth and bounded initial set. Such a flow shrinks the initial volume to zero in finite time.

The short time existence for a smooth VMCF in \(\mathbb R^n\)  is well-known. For any smooth (a closed set with a smooth boundary) and compact set 
\(E \subset \mathbb R^n\) with \(n\geq 2\) there is a unique VMCF starting from \(E\).  However, a VMCF \((E_t)_t\) may develop \emph{singularities} 
such as self-intersections of the boundary \(\partial E_t\)  within a finite time, see \cite{MS}. Another type of singularities of VMCF in a free boundary setting is studied in \cite{At}, where it is shown that certain thin necks have to pinch-off under VMCF. A natural problem is to find a sufficient condition for the initial set such that the VMCF starting from the set does not form singularities and has infinite lifetime.

Several contributions concerning the previous question have been made over the years. The classical result of Huisken \cite{Hu} (see also Gage \cite{Ga} in the case \(n=2\)) says that for any smooth, compact and convex set \(E \subset \mathbb R^n\) there is a unique VMCF \((E_t)_t\) starting from \(E\) such that the flow has infinite lifetime and converges exponentially fast  to a closed ball of the volume \(|E|\) in \(C^\infty\)-topology. In the dimension $n=3$, the result by Li \cite{Li} gives us an alternative condition for a connected initial boundary based on a certain energy such that the corresponding VMCF has infinite lifetime and converges exponentially fast to a ball. Notice that if VMCF converges in \(C^2\)-sense, then the limit set is a finite union of balls with mutually positive distance. This follows from the \emph{Alexandrov theorem}.

This naturally raises questions about the stability of VMCF near \emph{stable sets}, in this case the closed Euclidean balls.
Such problems are often called \emph{stability problems}. Escher and Simonett \cite{ES} used to center manifold analysis to prove that if \(E \subset \mathbb R^n\) is a smooth compact set and \(\bar B(x,r)\) is a closed ball with the same volume such that \(\partial E\) is \(C^{1,\alpha}\)-close to \(\partial B(x,r)\), then the VMCF starting from \(E\) has infinite lifetime and converges
to a translate of \(\bar B(x,r)\) exponentially fast  in \(C^k\)-sense for any 
\(k \in \mathbb N\).

Instead of having generic smooth sets in \(\mathbb R^n\), we may focus on periodic smooth sets, that is, the smooth sets in \(\mathbb R^n\) invariant under the lattice translations. This again leads us to consider the \emph{flat torus} \(\To^n\) in place of \(\mathbb R^n\).  One motivation for this is that there then are more different types of compact and \emph{critical} sets than in \(\mathbb R^n\). Also, the notion of VMCF generalizes to the flat torus and corresponds to the periodic VMCFs in \(\mathbb R^n\).  We are interested in the subclass of compact and critical sets in \(\To^n\) called \emph{strictly stable} sets (with respect to perimeter), 
see Definition \ref{StricSet}. Examples of strictly stable sets (besides single balls, cylinders and strips) in \(\To^3\) are those sets having a \emph{Schwarz surface} as a boundary, see \cite{Ro}. Acerbi, Fusco and Morini prove that strictly stable sets in \(\To^n\) are always isolated \emph{local perimeter minimizers} (under the notion of volume in \(\To^n\)), see \cite[Theorem 1.1]{AFM}. In contrast, the only smooth local perimeter minimizers in \(\mathbb R^n\) are just the single balls. This essentially follows from \cite{BdCE}, see also \cite{We}.

Our goal is to prove the following stability result for VMCFs near strictly stable sets in the flat torus \(\To^n\) with \(n=3,4\) using the notion of \emph{graph surface representation in normal direction} and Sobolev spaces on smooth compact hypersurfaces.

\begin{thm*}[\textbf{Main result}]
\ \\
Let \(F \subset \To^n\), where \(n=3,4\), be a strictly stable set and let \(\nu_F\) be the unit normal of \(\partial F\) with the inside-out 
orientation. There exists a positive constant \(\delta_0 \in \mathbb R_+\) depending on \(F\) such that the following hold.

If \(E_0\) is a smooth set in \(\mathbb T^n\) with the same volume as \(F\) and having a boundary of the form
\[
\partial E_0 = \{ x + \psi_0(x)\nu_F(x): x \in \partial F\},
\]
where \(\psi_0 \in C^\infty(\partial F)\) and \(\|\psi_0\|_{H^3(\partial F)} \leq \delta_0 \), then the VMCF
\((E_t)_t\) starting from \(E_0\) has infinite lifetime and converges to a translate \(F+p\) of \(F\) exponentially fast in \(W^{2,5}\)-sense from the point of view
of the boundary \(\partial(F+p)\). Moreover, \(|p| \rightarrow 0\) as \( \|\psi_0\|_{H^3(\partial F)} \rightarrow 0\).
\end{thm*}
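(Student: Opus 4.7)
The plan is to parametrize the evolving boundary as a normal graph over a time-dependent translate of the reference. Concretely, as long as the height function $\psi_t$ remains small in, say, $C^1(\partial F)$, one writes $\partial E_t = \{x + \psi_t(x)\nu_{F+p_t}(x) : x \in \partial(F+p_t)\}$ with a scalar $\psi_t$ identified with a function on $\partial F$ via the ambient translation. The VMCF law $V_t = \bar H_t - H_t$ then becomes a quasilinear, nonlocal, second-order parabolic equation of schematic form $\partial_t \psi_t = -L_F \psi_t + \mathcal{N}(\psi_t)$, where $L_F = -\Delta_{\partial F} - |B_F|^2$ is the Jacobi operator and $\mathcal{N}$ collects quadratic and higher remainders including the average $\bar H_t$. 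Short time existence gives smoothness on a maximal interval $[0,T)$; the aim is to prove $T=\infty$, that $\psi_t \to 0$ exponentially in $W^{2,5}$, and that $p_t\to p$ with $|p|\to 0$ as $\|\psi_0\|_{H^3}\to 0$.

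The heart of the argument is a higher-order energy estimate driven by strict stability. Choose $p_t$ through an auxiliary ODE so that at each time $\psi_t$ is $L^2$-orthogonal to the $n$ translation Jacobi modes $\langle \nu_{F+p_t}, e_j\rangle$; the volume constraint forces orthogonality to constants as well. On the resulting codimension-$(n+1)$ subspace, the strict stability of $F$, extended by continuity to nearby translates, yields a coercivity inequality
\[
\int_{\partial(F+p_t)}\!\!\bigl(|\nabla_\tau \phi|^2 - |B_{F+p_t}|^2 \phi^2\bigr)\,d\mathcal{H}^{n-1} \ \geq\ c_0\,\|\phi\|_{H^1}^2.
\]
Differentiate the natural $H^3$-energy $\mathcal{E}(t):=\tfrac{1}{2}\|\psi_t\|_{H^3(\partial(F+p_t))}^2$ along the flow. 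After integration by parts against $\partial_t \psi_t$ and splitting the right-hand side into the linear $L_F$-contribution and the nonlinear remainder, one reaches a leading negative term bounded above by $-c_0 \|\psi_t\|_{H^4}^2$, plus error terms involving $\dot p_t$ (controlled by the projection ODE) and nonlinear products of derivatives of $\psi_t$ up to order three.

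The dimensional restriction $n=3,4$ enters precisely in absorbing those errors. Since $\dim \partial F\leq 3$, the embeddings $H^3(\partial F)\hookrightarrow C^{1,\alpha}(\partial F)$ and $H^3(\partial F)\hookrightarrow W^{2,5}(\partial F)$ both hold. The first gives uniform smallness of $\psi_t,\nabla\psi_t$ once $\mathcal{E}(t)$ is small, allowing the nonlinear remainders to be absorbed into the coercive $H^4$-quantity via interpolation. One obtains a differential inequality $\tfrac{d}{dt}\mathcal{E}(t)\leq -c\,\mathcal{E}(t)$ valid as long as $\mathcal{E}(t)$ stays below a threshold depending only on $F$. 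A continuity and bootstrap argument starting from $\mathcal{E}(0)\leq \tfrac{1}{2}\delta_0^2$, with $\delta_0$ sufficiently small, then closes the loop: $\mathcal{E}$ stays below the threshold for all $t$ and decays exponentially, the projection ODE for $p_t$ converges to a limit $p$ with $|p|\lesssim \|\psi_0\|_{H^3}$, the graph representation persists on $[0,\infty)$ so $T=\infty$, and the second Sobolev embedding upgrades the $H^3$-decay to exponential $W^{2,5}$-convergence of the height function when rewritten over the fixed limit surface $\partial(F+p)$.

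The main obstacle I expect is closing the $H^3$-energy estimate. The VMCF is only second-order, so tracking three spatial derivatives of $\psi_t$ along the flow forces careful commutation of $\partial_t$ with tangential derivatives on the moving surfaces $\partial(F+p_t)$, careful control of the nonlocal term $\bar H_t$, and estimates for $\dot p_t$ in terms of $\psi_t$ itself; meanwhile the coercive structure furnished by strict stability has to survive on all these terms. The choice of $H^3$ as the correct energy level (and hence of $\delta_0$ measured in $H^3$) is dictated exactly by the Sobolev embeddings available in dimensions $n\leq 4$; in higher dimensions the present absorption scheme breaks down at this order.
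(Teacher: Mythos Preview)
Your proposal takes a genuinely different route from the paper. The paper does not introduce a moving reference $F+p_t$ or a projection ODE; it works over the fixed set $F$ throughout. Rather than differentiating $\|\psi_t\|_{H^3}$, it tracks the intrinsic quantities $\|\bar H_t-H_t\|_{L^2(\partial E_t)}$ and $\|\nabla_\tau H_t\|_{L^2(\partial E_t)}$. The pivotal observation is that $\bar H_t-H_t$ is \emph{automatically} $L^2$-orthogonal to every translation mode $\langle\nu_t,p\rangle$ (by the surface divergence theorem), so the strict-stability coercivity applies directly to $\varphi=\bar H_t-H_t$ with no gauge-fixing, giving $\tfrac{d}{dt}\|\bar H_t-H_t\|_{L^2}^2\le -\sigma_1\|\bar H_t-H_t\|_{L^2}^2$. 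A separate calculation shows $\|\nabla_\tau H_t\|_{L^2}^2+C_0\|\bar H_t-H_t\|_{L^2}^2$ is nonincreasing. The height-function norm is controlled only indirectly, through the two-sided estimate $\|\psi\|_{H^3}\approx \|\nabla_\tau H_{F_\psi}\|_{L^2}+\sqrt{D_F(F_\psi)}$; the paper never proves $\|\psi_t\|_{H^3}$ decays, only that it stays below threshold, which already yields global existence. The limit translate is then found \emph{a posteriori} by weak $H^3$-compactness plus a rigidity lemma for nearby critical sets, and the $W^{2,5}$-rate comes from interpolating exponentially decaying $L^2$ against merely bounded $H^3$.

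Your scheme is closer in spirit to Escher--Simonett and, if it closes, would give the stronger conclusion of exponential $H^3$-decay; its cost is the bookkeeping for $\dot p_t$ and the moving-surface commutators that the paper avoids entirely. One caution on a point you should make precise: your claimed leading term $-c_0\|\psi_t\|_{H^4}^2$ conflates two mechanisms. Parabolic dissipation contributes $-\|\nabla^4\psi_t\|_{L^2}^2$, while strict stability gives coercivity only at the $H^1$ level on the orthogonal complement; turning this into a genuine differential inequality $\tfrac{d}{dt}\mathcal E\le -c\,\mathcal E$ at the $H^3$ level requires an additional spectral-gap/commutator argument (roughly, equivalence of $\|\cdot\|_{H^3}$ with the graph norm of a power of $L_F$ on the complement) that is not automatic and should be spelled out.
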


\begin{rems*}
\ \\
Of course, using the same arguments we obtain a similar result in \(\To^2\).
Since the convergence happens exponentially fast in time, the flow is also said to be \emph{exponentially stable} near 
strictly stable set.
\end{rems*}

In terms of methods we are motivated by the paper Acerbi, Fusco, Julin and Morini \cite{AFJM}, where they prove similar kinds of stability results 
for other volume preserving flows, namely the \emph{modified Mullins-Sekerka flow} and the \emph{surface diffusion flow} in the three dimensional flat torus \(\To^3\).
The cornerstone of our analysis (see Section 3) is to prove that \(H^3\)-closeness to a strictly stable set implies for the VMCF \((E_t)_t\)
that the \(L^2\)-norm of the normal velocity over \(\partial E_t\), that is \(\|\bar H_t - H_t\|_{L^2(\partial E_t)}\), is decaying exponentially in time while the \(L^2\)-norm of its tangential gradient over \(\partial E_t\) is bounded in time. In proving this we are heavily dependent on Sobolev interpolation inequalities, which is the reason
we have to restrict ourselves to low dimensions.


\section{Preliminaries}

\subsection*{Flat torus}

Recall that for given \(n \geq 2\) the (unit) flat torus \(\To^n\) is defined as the quotient space \(\mathbb R^n / \ \mathbb Z^n\). Here the equivalence relation is given in the obvious way: \(x \sim y\) exactly when \(x-y \in \mathbb Z^n\). Functions \(f: \To^n \rightarrow \mathbb R^d\) (for \(d \in \mathbb N\)) can be (canonically) identified with the class of \(\mathcal D_n\)-periodic maps \(\mathbb R^n \rightarrow \mathbb R^d\), where
\(\mathcal D_n = [0,1[^n\) is the dyadic unit cube. 
The continuous maps \(\phi : \To^n \rightarrow \To^n\) are identified (up to integer translations) with their continuous lifts 
\(\tilde \phi: \mathbb R^n \rightarrow \mathbb R^n\).
Such lifts \(\tilde \phi :\mathbb R^n \rightarrow \mathbb R^n\) are of the form \(\tilde \phi = L + u\), where \(L \in M_n(\mathbb Z)\) and \(u\) is continuous and \(\mathcal D_n\)-periodic. 
Further, the sets in \(\To^n\) can be identified with  
\(\mathcal D_n\)-periodic sets in \(\mathbb R^n\). If \(x \in \To^n\), then for any \(p \in \mathbb R^n\) the notation \(x+p\) means the element
\(q(\tilde x + p) \in \To^n\), where \(\tilde x \in q^{-1}(x)\) and \(q:\mathbb R^n \rightarrow \To^n\) is the quotient map.

We consider \(\mathbb \To^n\) as a smooth and compact manifold where the smooth structure is given via the quotient map \(\mathbb R^n \rightarrow \mathbb R^n / \ \mathbb Z^n\). The natural flat Riemannian metric on \(\To^n\) is induced by the standard Euclidean inner product \(\langle  \ \cdot \ , \ \cdot \ \rangle\)  of \(\mathbb R^n\) via the quotient map. Indeed, one can think 
\(\To^n\) is “locally" the Euclidean \(\mathbb R^n\). A compatible distance \(d_{\To^n} : \To^n \times \To^n \rightarrow [0,\infty[\) is given by
\[
d_{\To^n}(x,y) = \min \{|\tilde x - \tilde y|: \tilde x \in q^{-1}(x), \tilde y \in q^{-1}(y) \}. 
\]
For every Borel set in \(A\) in \(\To^n\) its \(s\)-dimensional Hausdorff measure \(\mathcal H^s(A)\) is defined to be the corresponding Hausdorff-measure of the intersection of the periodic extension and \(\mathcal D_n\).

A function \(f:\To^n \rightarrow \mathbb R^d\) is locally \(C^{k,\alpha}\)-regular at \(x \in \To^n\) exactly when its periodic extension \(\tilde f\) to \(\mathbb R^n\) is locally \(C^{k,\alpha}\)-regular at every representative \(\tilde x\) of \(x\). Thus, we set the \(j\)-th derivative \(\D^j f (x)\) at \(x\) to be \(\D^j \tilde f(\tilde x)\) for every \(1\leq j \leq k\). Again, a continuous map \(\phi : \To^n \rightarrow \To^n\) is \(C^{k,\alpha}\)-regular exactly when its continuous lifts 
$\tilde \phi$ are \(C^{k,\alpha}\)-maps. Then $\phi$ has a unique continuous lift modulo integer translations so we may again see the derivatives of
$\phi$ as the derivatives of its continuous lifts. If a diffeomorphism \(\Phi: \To^n \rightarrow \To^n\) is close enough to the identity map \(\id_{\To^n}\) in the sense that  \(\sup_{\To^n} d_{\To^n} (\Phi,\id_{\To^n})\) is sufficiently small, 
then there is a unique diffeomorphic lift \(\tilde \Phi\) such that \(\tilde \Phi = \id + u\) and \(\sup_{\To^n} d_{\To^n} (\Phi,\id_{\To^n}) = \sup_{\mathbb R^n} |u|\).
In such a case, we set for every \(l \in \mathbb N\) and \(0 \leq \alpha \leq 1\) (for which \(\Phi\) is \(C^{l,\alpha}\)-regular)
\[
\|\Phi - \id_{\To^n}\|_{C^{l,\alpha}(\To^n;\To^n)}=\|u\|_{C^{l,\alpha}(\mathbb R^n;\mathbb R^n)}.
\]
Further, we  denote \(\id_{\To^n}\) by \(\id\) when there is no danger of confusion.

\subsection*{Smooth hypersurfaces}
By a hypersurface, we mean an embedded submanifold of codimension 1 without boundary in our discussion.
A set \(\Gamma \subset \To^n\) is a \(C^{k,\alpha}\)-hypersurface exactly when its 
\(\mathcal D_n\)-periodic extension is a \(C^{k,\alpha}\)-hypersurface in \(\mathbb R^n\). From now on, \(\Sigma\) denotes a smooth and compact hypersurface of $\To^n$. 
Note that then $\Sigma$ has finitely many (path) connected components $\Sigma_1,\ldots,\Sigma_N$ which are in their own right smooth and compact hypersurfaces. 
The shorthand notation \(|\Sigma| = \mathcal H^{n-1}( \Sigma)\) is used from time to time without any further mention.

A function \(f\) belongs to \(C^k(\Sigma;\mathbb R^d)\) if and only if it admits a \(C^k\)-extension to some open neighborhood of \(\Sigma\). 
For every \(x \in \Sigma\) the \emph{geometric tangent space} \(G_x\Sigma\) is defined as a unique \(n-1\)-dimensional subspace of \(\mathbb R^n\) such that \(x + G_x \Sigma\) is the tangent plane of \(\Sigma\) at \(x\). Equivalently we can set \(G_x\Sigma = \D \phi (\mathbb R^{n-1})\), where \(\phi:U \rightarrow \To^n\) is any local parametrization of \(\Sigma\) at \(x\). Moreover, the orthogonal projection
from \(\mathbb R^n\) onto \(G_x\Sigma\) is denoted by \(P_\Sigma (x)\). Then \(P_\Sigma : \Sigma \rightarrow \mathcal L (\mathbb R^n;\mathbb R^n)\) is a smooth map.

For any \(x \in \Sigma\), open neighborhood \(U\) of \(x\) in \(\To^n\) and \(f \in C^1(U;\mathbb R^k)\) the \emph{tangential differential} 
\(\D_\tau f (x):\mathbb R^n \rightarrow \mathbb R^k\) of \(f\) with respect to 
\(\Sigma\)  at \(x\) is given by 
\[
\D_\tau f (x) = \D f (x) P_\Sigma (x).
\]
The definition does not depend on how \(f\) is extended beyond \(\Sigma\). In the case \(k=1\) the dual of \(\D_\tau f (x)\) is called the \emph{tangential gradient} of \(f\) with respect to \(\Sigma\) at \(x\) denoted by \(\nabla_\tau f (x)\). Then we can write
\(\nabla_\tau f (x)= P_\Sigma (x) \nabla f (x)\) so \(\nabla_\tau f(x) \in G_x \Sigma\). Further, in the case \(k=n\) the \emph{tangential divergence} \(\div_\tau f (x)\) of \(f\) with respect to \(\Sigma\) at \(x\) is defined as the trace of \(\D_\tau f (x)\). These operations behave as their
ordinary counterparts in \(\To^n\).

Since \(\Sigma\) has finitely many connected components, then it is also orientable, i.e., it admits a smooth unit normal field \(\nu: \Sigma \rightarrow \mathbb R^n\), where
\(\nu(x) \in N_x\Sigma := G_x\Sigma^\perp\) for every \(x \in \Sigma\). The pair \((\Sigma,\nu)\) is called an \emph{oriented hypersurface} \(\Sigma\) with an orientation \(\nu\). If \(\Sigma'\) is a \(C^1\)-hypersurface and \(\Phi: \Sigma \rightarrow \Sigma'\) is a \(C^1\) diffeomorphism, then we have the \emph{change of variable formula}. For any \(h \in L^1(\Sigma',\mathcal H^{n-1})\)  
\[
\int_{\Sigma'} h \ \d \H^{n-1}  =  \int_\Sigma (h \circ \Phi) J_\tau \Phi \ \d \H^{n-1},
\]
where the \emph{tangential Jacobian} \(J_\tau \Phi \) is given by \(J_\tau \Phi = |\det \D \tilde \Phi| |(\D \tilde \Phi)^{-\T}\nu|\) and \(\tilde \Phi\) is any diffeomorphic extension of \(\Phi\). This is independent of the choice of orientation.
 
The \emph{second fundamental form} \(B (x)\) of \(\Sigma\) associated with the orientation \(\nu\) at \(x\)
can be seen as the linear operator from \(G_x \Sigma\) to itself or equivalently \(\mathbb R^n \rightarrow \mathbb R^n\)  given by
\[
B (x) =\D_\tau \tilde \nu (x),
\]
where \(\tilde \nu\) is any smooth extension of \(\nu\). We use both of these conventions interchangeably. If we use the latter one, then \(N_x \Sigma \subset \ker B (x)\) and \( \im \ B (x) \subset G_x \Sigma\).
The operator is symmetric and its eigenvalues and corresponding eigenspaces on \( G_x \Sigma\) are called \emph{principal values} and \emph{principal directions}. The \emph{mean curvature} \(H (x)\) of \(\Sigma\) associated with the orientation at \(x\) is now defined as the trace of  \(B (x)\) or equivalently the sum of principal values on \( G_x \Sigma\). Again, the maps \(B : \Sigma \rightarrow \mathcal L (\mathbb R^n;\mathbb R^n)\) and \(H :  \Sigma \rightarrow \mathbb R\) are smooth. If \(H\) is constant, then \(\Sigma\) is said to be critical. The Frobenius norm \(|B|\) on \(\Sigma\) does not depend on the used orientation.
We define also the mean curvature vector field \(\mathbf H : \Sigma \rightarrow \mathbb R^n\) by setting \(\mathbf H =- H \nu\). Notice that \(\mathbf H\) is independent of the choice of orientation.
Since \(\Sigma\) is compact, then the \emph{divergence theorem for hypersurfaces} says that for any \(f \in C^1(\To^n,\mathbb R^n)\) 
\[
\int_\Sigma \div_\tau f \ \d \H^{n-1} =-\int_\Sigma \langle f ,\mathbf H  \rangle \ \d \H^{n-1} = \int_\Sigma H \langle f,\nu \rangle \ \d \H^{n-1}.
\]
If \(f(x) \in G_x\Sigma\) for every \(x \in \Sigma\), then the previous formula yields the \emph{integration by parts formula}
\[
\int_\Sigma \varphi \ \div_\tau f \ \d \H^{n-1} = - \int_\Sigma \langle \nabla_\tau \varphi, f \rangle \ \d \H^{n-1}
\] 
for every \(\varphi \in C^1(\Sigma)\), in particular \(\int_\Sigma \div_\tau f \ \d \H^{n-1} =0\).

While the concept of tangential derivative can be defined similarly on compact \(C^1\)-hypersurfaces, the classical notion of mean curvature 
is not generally possible for such surfaces due to lack of regularity. However, the following generalization is possible. We say that a compact  
\(C^1\)-hypersurface 
\(\Gamma \subset \To^n\) with an orientation \(\nu\) has mean curvature in weak sense, if there exists a Borel function \(h \in L^1(\Gamma)\) such that for every \(f \in C^\infty(\To^n;\mathbb R^n)\) 
\[
\int_\Gamma \div_\tau f \ \d \H^{n-1} =  \int_\Gamma h \langle f,\nu \rangle \ \d \H^{n-1}.
\]
In such a case \(h\) is called a \emph{weak} or \emph{distributional mean curvature} of \(\Gamma\). Moreover, if \(h\) is constant, then \(\Gamma\) is called \emph{stationary}. By writing \(\Gamma\) locally as a graph surface in an orthonormal coordinates and applying the elliptic regularity results \cite[Proposition 7.56]{AFP} and \cite[Theorem 9.19]{GT} we obtain the following well-known result.
\begin{lemma}
\label{Lwcrit}
\ \\
Let \(\Gamma \subset \To^n\) be a compact \(C^{1,\alpha}\)-hypersurface with \(0<\alpha < 1\). Then \(\Gamma\) is smooth and critical if and only if it is stationary.
\end{lemma}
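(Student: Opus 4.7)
The forward direction is essentially immediate from the divergence theorem for hypersurfaces recalled just before the statement: if $\Gamma$ is smooth and critical then its classical mean curvature $H$ is a constant $c$, and for every $f \in C^\infty(\To^n;\mathbb R^n)$
\[
\int_\Gamma \div_\tau f \ \d \H^{n-1} = \int_\Gamma H \langle f,\nu \rangle \ \d \H^{n-1} = \int_\Gamma c \langle f,\nu \rangle \ \d \H^{n-1},
\]
so $\Gamma$ is stationary with weak mean curvature $h \equiv c$. The substance of the lemma is the converse.

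For the converse, I would fix an arbitrary point $x_0 \in \Gamma$ and work in local orthonormal coordinates $(x',x_n)$ in $\To^n$ chosen so that $G_{x_0}\Gamma$ becomes horizontal and $\nu(x_0)=e_n$. Since $\Gamma$ is $C^{1,\alpha}$, it can be written, in a neighborhood $V$ of $x_0$, as a graph
\[
\Gamma \cap V = \{(x',u(x')) : x' \in U\}
\]
for some open $U \subset \mathbb R^{n-1}$ and some $u \in C^{1,\alpha}(U)$, with $\nu$ expressible as $(-\nabla u,1)/\sqrt{1+|\nabla u|^2}$ (up to sign). The plan is then to translate the weak mean curvature identity, applied to test vector fields of the form $f = \varphi\, e_n$ compactly supported in $V$, into the weak prescribed mean curvature equation
\[
\int_U \frac{\nabla u \cdot \nabla \varphi}{\sqrt{1+|\nabla u|^2}} \ \d x' \;=\; c \int_U \varphi \ \d x' \qquad \text{for all } \varphi \in C_c^\infty(U),
\]
using the standard formulas $\d \H^{n-1} = \sqrt{1+|\nabla u|^2}\ \d x'$ and $\div_\tau(\varphi e_n) = -\langle \nabla_\tau\varphi, \nu\rangle\langle e_n,\nu\rangle + \varphi\, H\langle e_n,\nu\rangle$ (or, equivalently, expanding directly with the projection $P_\Gamma$). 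This identifies $u$ as a weak solution of the quasilinear divergence-form equation
\[
\div\!\left(\frac{\nabla u}{\sqrt{1+|\nabla u|^2}}\right) = -c \qquad \text{in } U.
\]

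Now I would bootstrap regularity. Because $u \in C^{1,\alpha}$, the coefficient matrix
\[
a^{ij}(x') \;=\; \frac{\delta_{ij}}{\sqrt{1+|\nabla u(x')|^2}} - \frac{\partial_i u(x')\,\partial_j u(x')}{(1+|\nabla u(x')|^2)^{3/2}}
\]
obtained by expanding the divergence is uniformly elliptic and of class $C^{0,\alpha}$, and $u$ solves $a^{ij}(x')\,\partial_{ij}u = -c$ in the weak (and hence, by \cite[Prop.~7.56]{AFP}, strong) sense. Applying the interior $W^{2,p}$ estimates of \cite[Thm.~9.19]{GT} gives $u \in W^{2,p}_{\mathrm{loc}}(U)$ for every $p < \infty$, hence $u \in C^{1,\beta}_{\mathrm{loc}}$ for all $\beta<1$ by Sobolev embedding; then classical Schauder theory upgrades this to $u \in C^{2,\alpha}_{\mathrm{loc}}$, and iterating the bootstrap (each time the coefficients gain one derivative with the same H\"older exponent) shows $u \in C^\infty(U)$. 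Since $x_0$ was arbitrary and smoothness is local, $\Gamma$ is a smooth hypersurface, and the strong form of the equation reads $H \equiv c$, so $\Gamma$ is critical.

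The main obstacle I anticipate is purely technical: carefully verifying that the weak identity coming from the stationary hypothesis of $\Gamma$ in $\To^n$ really does produce the prescribed mean curvature equation for $u$ in the local chart (i.e.\ handling the tangential divergence, the graph parametrization and the Jacobian $\sqrt{1+|\nabla u|^2}$ correctly), and then checking the hypotheses of \cite[Prop.~7.56]{AFP} and \cite[Thm.~9.19]{GT} so that the bootstrap actually starts. Once the PDE is in place, the elliptic regularity machinery runs in a standard way.
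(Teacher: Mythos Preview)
Your proposal is correct and follows exactly the approach the paper indicates: the paper does not give a detailed proof but only remarks, just before the lemma, that one writes \(\Gamma\) locally as a graph in orthonormal coordinates and applies the elliptic regularity results \cite[Proposition~7.56]{AFP} and \cite[Theorem~9.19]{GT}. Your plan spells out precisely this route, invoking the same two references for the bootstrap, so there is nothing to add.
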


\subsection*{Vector fields, tensors and covariant derivatives on hypersurfaces}
Since now for every \(\varphi \in C^\infty(\Sigma)\) there is a smooth extension \(\tilde \varphi\) to some open neighborhood of \(\Sigma\) and tangential differential is independent of the way an ambient function is extended beyond \(\Sigma\), we may
define a tangential gradient \(\nabla_\tau \varphi (x)\) of \(\varphi\) at \(x\) by setting \(\nabla_\tau \varphi (x) = \nabla_\tau \tilde \varphi (x)\). Let \(T_x\Sigma\) be the \emph{tangent space} of \(\Sigma\) at \(x\). Then for every \(v \in T_x \Sigma\)
\[
v(\varphi) =  \langle z_v, \nabla_\tau \varphi (x) \rangle
\]
with some unique \(z_v \in G_x\Sigma\). Thus the geometric tangent space \(G_x\Sigma\) can be canonically
identified with the tangent space \(T_x\Sigma\) and, from now on, we just use the notation \(T_x\Sigma\). The tangent bundle \(T\Sigma\) 
is then the disjoint union of the tangent spaces  \(T_x \Sigma\) equipped with the corresponding smooth structure.
Further, we may canonically identify the set of smooth vector fields \(\mathfrak X(\Sigma)\) on \(\Sigma\), that is, the smooth sections
\(\Sigma \rightarrow T\Sigma\), with the collection 
\[
\mathfrak X(\Sigma) = \{X \in C^\infty(\Sigma;\mathbb R^n) : X(x) \in T_x\Sigma \ \ \text{for every} \ \ x \in \Sigma\},
\] 
and for \(X \in \mathfrak X(\Sigma)\) its action on \(\varphi \in C^\infty(\Sigma)\) can be seen as
\(X \varphi = \langle X , \nabla_\tau \varphi \rangle\) on \(\Sigma\).
As usual, for \(X,Y \in \mathfrak X(\Sigma)\) the vector field \(XY\) is determined by the rule \(XY\varphi = X(Y\varphi)\).

The Riemannian metric \(g\) on \(\Sigma\) is the naturally induced flat metric. Keeping the previous identifications in mind this is just the restriction of the standard Euclidean inner product to the hyperspace \(T_x\Sigma\). The usual flat and sharp operations induced by \(g\) for smooth vector and \emph{covector} fields on \(\Sigma\) are denoted by \(\flat\) and \(\sharp\) correspondingly. Then for \(\varphi \in C^\infty(\Sigma)\) the gradient vector field is
\(\text{grad} \ \varphi = d \varphi^\sharp = \nabla_\tau \varphi\).

Slightly abusing the notations we define for a vector field \(X \in \mathfrak X(\Sigma)\) the (geometric) tangential differential \(\D_\tau X (x)\) at \(x\)
as the linear map \(\mathbb R^n \rightarrow \mathbb R^n\) (or equivalently \(T_x \Sigma \rightarrow T_x \Sigma\))
 by setting
\[
\D_\tau X (x) = P_\Sigma(x) \D_\tau \tilde X (x),
\]
where \(\tilde X\) is any smooth extension of \(X\) beyond \(\Sigma\). The tangential divergence  \(\div_\tau X (x)\) of \(X\) at \(x\) is the trace of previous operator and \(\div_\tau X (x) = \div_\tau \tilde X (x)\). Now 
the mappings  \(\Sigma \rightarrow \mathcal L (\mathbb R^n;\mathbb R^n)\), \(x \mapsto \D_\tau X (x)\) 
and \(\Sigma \rightarrow \mathbb R\), \(x \mapsto \div_\tau X(x)\), are smooth. Again, for any \(\varphi \in C^\infty(\Sigma)\) the \emph{tangential Hessian} is given by \(\D_\tau^2 \varphi = \D_\tau \nabla_\tau \varphi\), which is a symmetric operator.
Further, the \emph{Laplace-Beltrami operator} or the \emph{tangential Laplacian} \(\Delta_\tau\) (of \(\Sigma\)) acting on \(\varphi\) can be seen as \(\Delta_\tau \varphi = \tr \ \D_\tau^2 \varphi = \div_\tau \nabla_\tau \varphi\).

The compatible Riemannian connection on \(\Sigma\) is the \emph{tangential connection} \(\nabla^\top : \mathfrak X(\Sigma) \times  \mathfrak X(\Sigma) \rightarrow \mathfrak X(\Sigma)\) given by the rule
\[
\nabla^\top_X Y = (\D_\tau Y) X.
\]
Recall that this is a symmetric connection, i.e.,
\(\nabla^\top_X Y-\nabla^\top_Y X = [X,Y]\), where  \([X,Y] =XY-YX \) is the corresponding commutator.

As usual, for every \(m \in \mathbb N \cup \{0\}\) and \(x \in \Sigma\) we denote the space of \(m\)-multilinear mappings or \(m\)-\emph{covariant tensors}
\(T_x\Sigma \times \cdots \times T_x\Sigma \rightarrow \mathbb R\) by \(T^m_0 (T_x \Sigma)\). Recall the special cases 
\(T^1_0(T_x\Sigma) = (T_x\Sigma)^*\) and \(T^0_0(T_x\Sigma)=\mathbb R\).
If  \(m>0\), for given \(L,G \in T^m_0 (T_x \Sigma)\) the inner product is given by
\[
\langle L,G\rangle = \sum_{i_1 \ldots i_m} L(v_{i_1},\ldots,v_{i_m}) G(v_{i_1},\ldots,v_{i_m}),
\]
where \(v_1,\ldots,v_{n-1}\) is any orthonormal basis of \(T_x\Sigma\). Thus the corresponding tensor norm for
\(L \in T^m_0 (T_x \Sigma)\) is \(|L| = \langle L,L \rangle^\frac{1}{2}\). If \(L\in T^2_0 (T_x \Sigma)\), then with help of 
 any orthonormal basis \(v_1,\ldots,v_{n-1}\) of \(T_x\Sigma\) 
 we may define the trace of 
\(L\) by setting 
\[
\tr \ L = \sum_i L(v_i,v_i).
\]

The \(m\)-\emph{covariant tensor bundle} \(T^m_0(\Sigma)\)
is defined by similar means as \(T\Sigma\). A \emph{covariant} \(m\)-\emph{tensor field} or section \(T:\Sigma \rightarrow T^m_0(\Sigma)\) is smooth
exactly when \(T(X_1,\ldots,X_m) \in C^\infty(\Sigma)\) for every \(X_1,\ldots,X_m \in \mathfrak X(\Sigma)\). We denote them by \(\mathscr T^m(\Sigma)\).
Recall that \(\mathscr T^1(\Sigma)\) is the collection of the smooth \emph{covector} fields or 1-\emph{forms} and  
\(\mathscr T^0(\Sigma) = C^\infty(\Sigma)\).

Now for any (smooth) covariant \(m\)-tensor field \(T \in \mathscr T^m(\Sigma)\) the covariant derivative of \(T\), denoted by 
\(\nabla_\co T\), is defined as the element of \(\mathscr T^{m+1}(\Sigma)\) for which
\[
\nabla_\co T (X_1,\ldots,X_m,X_{m+1}) = X_{m+1} T (X_1,\ldots,X_m) - \sum_{k=1}^m T(X_1,\ldots,\nabla_{X_{m+1}}^\top X_k,\ldots,X_m).
\]
For \(\varphi \in C^\infty(\Sigma)\) this simply means that \(\nabla_\co \varphi = d \varphi\) and again for \(T \in \mathscr T^m(\Sigma)\) the \(k+1\)-th 
covariant derivative is defined recursively by setting \(\nabla_\co^{k+1} T = \nabla_\co(\nabla_\co^k T)\). It is straightforward to compute that
for  \(\varphi \in C^\infty(\Sigma)\) the \emph{covariant Hessian} \(\nabla_\co^2 \varphi\) is now the symmetric 2-tensor field obtained by
\[
\nabla_\co^2 \varphi (X,Y)= \langle X, \D^2_\tau \varphi Y\rangle \ \ \text{for every} \ \  X,Y \in \mathfrak X(\Sigma),
\]
 \(\Delta_\co \varphi := \tr \ \nabla_\co^2\varphi = \Delta_\tau \varphi\) and \(|\nabla_\co^2 \varphi| = |\D^2_\tau \varphi|\), where \( |\D^2_\tau \varphi|\) is the standard Frobenius norm of \(\D^2_\tau \varphi\).

For any \(T \in \mathscr T^m(\Sigma)\) the \(C^k\)-norm on \(\Sigma\) is given in the obvious way
\[
\|T\|_{C^k(\Sigma)} = \sum_{i=0}^m \sup_\Sigma|\nabla_\co^i T|
\] 
and further we set \(\|X\|_{C^k(\Sigma)} = \|X^\flat\|_{C^k(\Sigma)}\) for every \(X \in \mathfrak X (\Sigma)\).

\subsection*{Sobolev and Hölder spaces on hypersurfaces}

Recall that the \emph{Riemannian measure} on \(\Sigma\) induced by the flat metric \(g\)  is the restriction of the \(n-1\)-dimensional Hausdorff measure \(\H^{n-1}\) to \(\Sigma\). Then the corresponding \(p\)-Lebesgue space is \(L^p(\Sigma, \H^{n-1})\) for any \(1\leq p \leq \infty\). 
For every smooth covariant tensor field \(T\) on \(\Sigma\) the \(L^p\)-norm is now given by
\[
\|T\|_{L^p(\Sigma)} = \left(\int_\Sigma |T|^p \ \d \H^{n-1} \right)^\frac{1}{p}.
\]
For a \(\varphi \in C^\infty(\Sigma)\) the Sobolev \(W^{k,p}\)-norm with \(k \in \mathbb N \cup \{0\}\) and \(1\leq p \leq \infty\) is given
as usual
\[
\|\varphi\|_{W^{k,p}(\Sigma)} = \left(\sum_{j=0}^k \|\nabla_\co^j \varphi \|_{L^p(\Sigma)}^p\right)^\frac{1}{p}.
\]
Here \(\nabla_\co^0\varphi = \varphi\). For any \(1\leq p < \infty\) the space \(W^{k,p}(\Sigma)\) is now the norm completion of  \(C^\infty(\Sigma)\),
where any of its element \(\varphi\) is considered as the \(k+1\)-tuple \((\varphi,\nabla_\co^1\varphi,\ldots,\nabla_\co^k\varphi)\). 
Again, we use the conventional notation \(H^k(\Sigma)\) for a Hilbert space
\(W^{k,2}(\Sigma)\), where the inner product is given in the obvious way 
\[
\langle \varphi_1,\varphi_2\rangle_{H^k} = \sum_{j=0}^k \int_\Sigma \langle\nabla_\co^j \varphi_1,\nabla_\co^j\varphi_2\rangle \ \d\H^{n-1}.
\]

We have the standard Sobolev interpolation for \(L^p\)-norms of covariant derivatives of smooth maps, see \cite[Theorem 3.70]{Au}. From now on, we denote the space of \(C^\infty(\Sigma)\)-maps with vanishing integrals by
\(\tilde C^\infty(\Sigma)\).
 
\begin{lemma}[\textbf{Basic interpolation}]
\label{Int1}
\ \\
Let \(1\leq r,q < \infty\), \(1\leq p \leq \infty\) and integers \(0 \leq j < m\) be selected  
such that
\begin{equation}
\label{Int1a}
\frac{1}{p} = \frac{j}{n-1} + \left(\frac{1}{r} - \frac{m}{n-1}\right)\alpha + \frac{1-\alpha}{q},  
\end{equation}
where \(\frac{j}{m} \leq \alpha \leq 1\) and the condition \(r= \frac{n-1}{m-j} \neq 1 = \alpha\) does not hold.
Then there exists a constant \(C\) depending on the previous numbers and \(\Sigma\) such that for every \(\varphi \in C^\infty(\Sigma)\)
\begin{equation}
\label{Int1b}
\|\nabla_{\co}^j \varphi\|_{L^p(\Sigma)} \leq C \left( \|\nabla_{\co}^m \varphi\|_{L^r(\Sigma)}^\alpha \| \varphi\|_{L^q(\Sigma)}^{1-\alpha} + \beta \|\varphi\|_{L^1(\Sigma)}\right).
\end{equation}
Here \(\beta = 0\), if  (i) \(j\geq1\) or (ii) \(\Sigma\) is connected and \(\varphi \in \tilde C^\infty(\Sigma)\). Otherwise, \(\beta = 1\).
\end{lemma}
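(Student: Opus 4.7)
The plan is to reduce the stated inequality to the classical Euclidean Gagliardo--Nirenberg inequality on $\mathbb{R}^{n-1}$ via local coordinate charts and a partition of unity. Since $\Sigma$ has finitely many smooth compact connected components $\Sigma_1,\ldots,\Sigma_N$ and every $L^p$-norm on $\Sigma$ splits as a finite combination of the $L^p$-norms on the components, I would first prove the estimate on each connected $\Sigma_i$ separately and then recombine. On a given $\Sigma_i$, choose a finite smooth atlas $\{(\phi_k,U_k)\}$ together with a subordinate smooth partition of unity $\{\eta_k\}$, write $\varphi = \sum_k \eta_k \varphi$, and pull each piece back to a compactly supported smooth function $u_k$ on $\mathbb{R}^{n-1}$.

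In these local coordinates, $\nabla_\co^j \varphi$ is expressed through iterated partial derivatives of $u_k$ plus lower-order terms whose coefficients are polynomial expressions in the metric components and Christoffel symbols of the induced flat metric on $\Sigma_i$. Because $\Sigma_i$ is smooth and compact, all such coefficients and their derivatives are uniformly bounded, so up to multiplicative constants and strictly lower-order $L^p$-corrections the quantity $\|\nabla_\co^j \varphi\|_{L^p(\Sigma_i)}$ is equivalent to $\sum_k \|D^j u_k\|_{L^p(\mathbb{R}^{n-1})}$, and analogously on the right-hand side of (\ref{Int1b}). The Euclidean Gagliardo--Nirenberg inequality applied to each $u_k$ under the scaling balance (\ref{Int1a}) in dimension $n-1$, together with the well-known exceptional case $r=(n-1)/(m-j)$, $\alpha=1$ being explicitly excluded, then yields the bound on each chart. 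Summing over $k$, handling the derivatives that fall on the cut-off factors $\eta_k$ as lower-order remainders, and absorbing those remainders by an induction on the order of differentiation produces (\ref{Int1b}) on $\Sigma_i$ with some constant multiple of $\|\varphi\|_{L^q(\Sigma_i)}$ (or an analogous lower-order quantity) possibly appearing on the right; in the range $j\geq 1$ this is harmless since $\|\varphi\|_{L^q}$ is already present.

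The main obstacle is the endpoint $j=0$, where the constants lie in the kernel of $\nabla_\co^m$ and a pure interpolation bound is impossible without further control. If $\Sigma_i$ is connected and $\varphi \in \tilde C^\infty(\Sigma_i)$ has zero mean, a standard Poincaré inequality on the closed connected manifold $\Sigma_i$ gives $\|\varphi\|_{L^q(\Sigma_i)} \leq C \|\nabla_\co \varphi\|_{L^q(\Sigma_i)}$, which combined with the $j\geq 1$ case iterated once reduces the estimate to the previous step and delivers $\beta = 0$. In the general case one splits $\varphi = \bar\varphi + (\varphi - \bar\varphi)$, where $\bar\varphi$ denotes the integral average over $\Sigma_i$, applies the zero-mean bound to $\varphi - \bar\varphi$, and finally estimates the constant $|\bar\varphi| \leq \|\varphi\|_{L^1(\Sigma_i)}/|\Sigma_i|$; this last contribution is precisely the source of the $\beta = 1$ correction term in (\ref{Int1b}). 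Recombining over the connected components yields the full statement.
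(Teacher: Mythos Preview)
The paper does not give its own proof of this lemma: it is stated as a known fact with a citation to \cite[Theorem 3.70]{Au} (Aubin's book). So there is nothing in the paper to compare your argument against line by line.

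That said, your outline is the standard route and is essentially how the result is established in the cited reference: localize via a finite atlas and partition of unity on each compact connected component, pull back to compactly supported functions on $\mathbb{R}^{n-1}$, invoke the Euclidean Gagliardo--Nirenberg inequality under the scaling relation \eqref{Int1a}, and absorb the lower-order terms coming from Christoffel symbols and cut-off derivatives by induction on the order. Your treatment of the $j=0$ case (mean splitting plus Poincar\'e on a connected closed manifold) is also the standard way to produce the $\beta$-term. One small point to be careful with: when $\alpha=1$ and $j\geq 1$, the factor $\|\varphi\|_{L^q}^{1-\alpha}$ is absent, so the lower-order remainders cannot be hidden in it directly; you need instead the iterated interpolation between $\|\nabla_\co^i\varphi\|$ for $0<i<m$ and the endpoints, together with Young's inequality, to absorb them into $\|\nabla_\co^m\varphi\|_{L^r}$. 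With that caveat, your sketch is correct.
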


\begin{rem}
\label{H3Remark}
\ \\
It follows from the previous theorem  that in the case \(2\leq n \leq 4\) there is a
constant \(C\) depending on \(\Sigma\) such that \(\|\varphi\|_{L^p(\Sigma)} \leq C \|\varphi\|_{H^1(\Sigma)}\) and \(\|\varphi\|_{C^1(\Sigma)},\|\varphi\|_{W^{2,p}(\Sigma)} \leq C \|\varphi\|_{H^3(\Sigma)}\) for every \(1\leq p\leq 6\) and \(\varphi \in C^\infty(\Sigma)\).
\end{rem}

We also recall the following interpolation inequality for covariant tensor fields, see \cite[Theorem 12.1]{Ha}.

\begin{lemma}[\textbf{Second order tensor interpolation}]
\label{Int2}
\ \\
Let $N$ be the number of connected components of $\Sigma$ and suppose that 
\begin{equation}
\label{Int2a}
\frac{1}{p} = \frac{1}{q}+ \frac{1}{r}  
\end{equation}
for \(1\leq p,q,r \leq \infty\).
Then for every smooth covariant \(k\)-tensor field \(T\) on \(\Sigma\) it holds
\begin{equation}
\label{Int2b}
\|\nabla_{\co} T\|_{L^{2p}(\Sigma)}^2 \leq N (2p-2+n-1) \|\nabla_{\co}^2 T\|_{L^r(\Sigma)} \| T\|_{L^q(\Sigma)}.
\end{equation}
\end{lemma}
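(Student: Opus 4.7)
The plan is to prove the inequality on each connected component of $\Sigma$ separately via Hamilton's integration-by-parts trick, and then combine the $N$ componentwise estimates by a crude sum bound that accounts for the factor $N$. Fix a component $\Sigma_\ell$, which is a closed Riemannian manifold of dimension $n-1$. The starting identity is
\[
\int_{\Sigma_\ell}|\nabla_\co T|^{2p}\,\d\H^{n-1}=\int_{\Sigma_\ell}|\nabla_\co T|^{2p-2}\,\langle\nabla_\co T,\nabla_\co T\rangle\,\d\H^{n-1}.
\]
Viewing the second factor as the covariant derivative of $T$ and applying the divergence theorem for tensor fields on $\Sigma_\ell$ (valid since $\Sigma_\ell$ has empty boundary), the right-hand side becomes
\[
-\int_{\Sigma_\ell}\bigl\langle T,\ \mathrm{div}_\co\!\bigl(|\nabla_\co T|^{2p-2}\,\nabla_\co T\bigr)\bigr\rangle\,\d\H^{n-1},
\]
where $\mathrm{div}_\co$ contracts the leading differentiated index of the resulting $(k{+}1)$-tensor.

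I would then expand the divergence by the Leibniz rule and estimate each piece pointwise. The Laplacian piece satisfies $|\Delta_\co T|\le\sqrt{n-1}\,|\nabla_\co^2 T|$ by Cauchy-Schwarz on the metric trace, while the chain rule $\nabla_\co|\nabla_\co T|^{2p-2}=(2p-2)|\nabla_\co T|^{2p-4}\langle\nabla_\co^2 T,\nabla_\co T\rangle$, together with two applications of Cauchy-Schwarz (once inside each tensor block and once across the remaining derivative index), bounds the weight-derivative piece pointwise by $(2p-2)|\nabla_\co T|^{2p-2}|\nabla_\co^2 T|$ after pairing with $T$. Using $\sqrt{n-1}\le n-1$ for $n\ge 2$ yields
\[
\int_{\Sigma_\ell}|\nabla_\co T|^{2p}\,\d\H^{n-1}\le (2p-2+n-1)\int_{\Sigma_\ell}|T|\,|\nabla_\co T|^{2p-2}\,|\nabla_\co^2 T|\,\d\H^{n-1}.
\]
Three-factor Hölder with exponents $q,\ \tfrac{p}{p-1},\ r$ — whose reciprocals sum to $1$ precisely because $\tfrac{1}{p}=\tfrac{1}{q}+\tfrac{1}{r}$ — followed by division by $\|\nabla_\co T\|_{L^{2p}(\Sigma_\ell)}^{2p-2}$, gives the componentwise inequality
\[
\|\nabla_\co T\|_{L^{2p}(\Sigma_\ell)}^{2}\le (2p-2+n-1)\,\|\nabla_\co^2 T\|_{L^r(\Sigma_\ell)}\,\|T\|_{L^q(\Sigma_\ell)}.
\]

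To globalise, I would raise this to the $p$-th power, sum over $\ell=1,\ldots,N$, and in each summand bound $\|\nabla_\co^2 T\|_{L^r(\Sigma_\ell)}\le\|\nabla_\co^2 T\|_{L^r(\Sigma)}$ and $\|T\|_{L^q(\Sigma_\ell)}\le\|T\|_{L^q(\Sigma)}$; the resulting $N$ identical copies produce the factor $N$. Extracting the $1/p$-th root, with $N^{1/p}\le N$, recovers the claimed inequality. The one genuinely subtle point is the chain-rule step when $1<p<\tfrac{3}{2}$, where the intermediate factor $|\nabla_\co T|^{2p-3}$ is singular on the zero set of $\nabla_\co T$; I would handle this in the standard way by replacing $|\nabla_\co T|^{2p-2}$ with the smooth regularisation $(|\nabla_\co T|^2+\varepsilon)^{p-1}$, running the integration by parts and all pointwise estimates with this smooth weight, and letting $\varepsilon\to 0^+$ by dominated convergence, whose majorant $(|\nabla_\co T|^2+1)^{p-1}|\nabla_\co^2 T||T|$ is integrable by the compactness and smoothness of $T$.
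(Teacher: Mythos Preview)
Your proof is correct and is precisely Hamilton's integration-by-parts argument from \cite[Theorem 12.1]{Ha}, which is exactly what the paper cites rather than proves; the paper does not supply its own proof of this lemma. The only addition beyond Hamilton's connected-manifold statement is your crude sum over the $N$ components to produce the factor $N$, which is handled correctly.
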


Moreover, we need the following estimates, see \cite[Lemma 2.3 and Remark 2.4]{FJM}.
\begin{lemma}
\label{Llaplace}
\ \\
There are
constants \(C_n\), depending only on \(n\), and \(C_\Sigma\), depending on \(\Sigma\), such that for every
\(\varphi \in C^\infty(\Sigma)\)
\begin{align}
\label{llaplace1}
\|\nabla^2_{\co} \varphi\|_{L^2(\Sigma)}^2 &\leq \|\Delta_{\co} \varphi\|_{L^2(\Sigma)}^2 + C_n \int_\Sigma 
|B|^2|\nabla_{\co} \varphi|^2 \ \d\mathcal H^{n-1} \\
\label{llaplace2}
\|\nabla^3_{\co} \varphi\|_{L^2(\Sigma)}^2 &\leq \|\nabla_\co(\Delta_{\co} \varphi)\|_{L^2(\Sigma)}^2 + C_\Sigma \|\varphi\|_{H^2(\Sigma)}^2.
\end{align}
\end{lemma}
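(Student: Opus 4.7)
\smallskip

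The plan is to treat both estimates as Bochner-type identities: repeatedly integrate by parts on the boundaryless compact $\Sigma$, and use the Ricci identity to commute covariant derivatives, picking up curvature terms which, via the Gauss equation for a hypersurface in Euclidean space, are pointwise controlled by $|B|^2$ (and, for derivatives of curvature, by quantities depending only on $\Sigma$).

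For \eqref{llaplace1}, I would work in a local orthonormal frame $\{e_i\}$ on $\Sigma$ and expand
\[
\int_\Sigma |\nabla_{\co}^2 \varphi|^2 \, \d \H^{n-1} = \int_\Sigma (\nabla_i \nabla_j \varphi)(\nabla_i \nabla_j \varphi)\, \d \H^{n-1}.
\]
Using the symmetry $\nabla_i\nabla_j\varphi = \nabla_j\nabla_i\varphi$ (since $\nabla_\co^2\varphi$ is the covariant Hessian of a function), integrate by parts on the outer $\nabla_i$ to move it onto the second factor. Then commute $\nabla_i\nabla_j \nabla_i \varphi$ using the Ricci identity applied to the 1-form $d\varphi$: this produces $\nabla_j\Delta_\co\varphi$ plus a term $\mathrm{Ric}(\nabla_\co\varphi, e_j)$. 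Integrating by parts once more turns the first contribution into $\int(\Delta_\co\varphi)^2$, while the Ricci term yields $\int \mathrm{Ric}(\nabla_\co\varphi,\nabla_\co\varphi)$. By the Gauss equation $R_{ijkl}=B_{ik}B_{jl}-B_{il}B_{jk}$ valid for hypersurfaces in flat ambient spaces, one has $|\mathrm{Ric}| \leq C_n|B|^2$ pointwise, which yields \eqref{llaplace1} with a dimensional constant.

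For \eqref{llaplace2} I would iterate the same procedure one level higher, applied to $\nabla_\co\varphi$ regarded as a 1-form on $\Sigma$. Writing
\[
\int_\Sigma |\nabla_{\co}^3 \varphi|^2 \, \d \H^{n-1} = - \int_\Sigma \nabla_\co^2\varphi \cdot \Delta_\co(\nabla_\co^2\varphi)\, \d \H^{n-1}
\]
after one integration by parts, I would then commute the rough Laplacian $\Delta_\co$ past the two covariant derivatives. Each commutation produces, via Ricci, terms linear in the Riemann tensor of $\Sigma$ and its covariant derivative acting on $\nabla_\co^2\varphi$, $\nabla_\co\varphi$, or $\varphi$. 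After further integrations by parts, the leading contribution becomes $\int|\nabla_\co(\Delta_\co\varphi)|^2$, and the remaining error terms are bounded in absolute value by $C_\Sigma\int(|\nabla_\co^2\varphi|^2+|\nabla_\co\varphi|^2+\varphi^2) = C_\Sigma \|\varphi\|_{H^2(\Sigma)}^2$, since on the fixed smooth compact $\Sigma$ the curvature tensor and all its covariant derivatives are uniformly bounded, and one uses Cauchy--Schwarz together with absorption into lower-order $H^2$ norms whenever a mixed product of a third derivative and a curvature error appears.

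The first estimate is essentially a clean one-shot Bochner computation, so the substantive obstacle lies in \eqref{llaplace2}: the main difficulty is the bookkeeping of the several commutator terms produced when moving $\Delta_\co$ across two covariant derivatives, and organising the integrations by parts so that every remainder term is genuinely of $H^2$-type (i.e.\ involves at most two derivatives of $\varphi$ in $L^2$). Once these curvature remainders are collected and the Cauchy--Schwarz/Young absorptions are performed, the constant depends only on bounds for $B$ and its derivatives on $\Sigma$, and can be packaged as a single $C_\Sigma$.
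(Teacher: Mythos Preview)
Your approach is correct and is the standard Bochner-type argument for such estimates. Note, however, that the paper does not actually prove this lemma: it is stated with a direct citation to \cite[Lemma 2.3 and Remark 2.4]{FJM}, so there is no in-paper proof to compare against. Your sketch for \eqref{llaplace1} is precisely the integrated Bochner identity
\[
\int_\Sigma |\nabla_\co^2\varphi|^2 \,\d\H^{n-1} = \int_\Sigma (\Delta_\co\varphi)^2 \,\d\H^{n-1} - \int_\Sigma \mathrm{Ric}(\nabla_\co\varphi,\nabla_\co\varphi)\,\d\H^{n-1},
\]
combined with the Gauss equation bound $|\mathrm{Ric}|\leq C_n|B|^2$; and your plan for \eqref{llaplace2} (iterate one order higher, commute via Ricci, absorb cross terms involving $\nabla_\co^3\varphi$ by Young's inequality, and bound all curvature and curvature-derivative factors by constants depending on the fixed compact $\Sigma$) is exactly what is needed. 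The bookkeeping you flag as the main obstacle is genuine but routine, and your awareness that third-order cross terms must be absorbed back into the left-hand side is the only delicate point.
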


For a continuous map \(f:\Sigma \rightarrow \mathbb R\) and \(0 <\alpha < 1\) the \(C^\alpha(\Sigma)\)-Hölder semi-norm
is given by 
\[
[f]_{C^\alpha(\Sigma)} = \max_{i=1,\ldots,N} \sup_{\substack{ x,y \in \Sigma_i \\ x \neq y}} \frac{|f(x)-f(y)|}{d_g (x,y)^\alpha},
\]
where \(d_g\) is the length metric induced by \(g\). For every \(\varphi \in C^{\infty}(\Sigma)\) we define 
\(C^{1,\alpha}(\Sigma)\)-norm by setting
\[
\|f\|_{C^{1,\alpha}(\Sigma)} = \|\varphi\|_{C^1(\Sigma)} + \sup_{\substack{X \in \mathfrak X(\Sigma) \\ \|X\|_{C^1(\Sigma)} \leq 1}} [\nabla_\co \varphi (X)]_{C^\alpha(\Sigma)} 
\]
and set the space \(C^{1,\alpha}(\Sigma)\) to be the norm completion of the set of \(C^\infty(\Sigma)\)-maps with finite \(C^{1,\alpha}(\Sigma)\)-norm. 
Then  \(C^{1,\alpha}(\Sigma)\)  is the space of continuous maps on \(\Sigma\) with \(C^{1,\alpha}\)-extension to some open neighborhood of \(\Sigma\).
Note that there is several equivalent ways to define \(C^{1,\alpha}(\Sigma)\)-norm.

The higher order Hölder spaces are defined similarly, but we do not need them. By using the Rellich-Kondarchov theorem, see \cite[Thm 2.9]{He}, and Lemma \ref{Int1}
one obtains the following embedding result.
\begin{lemma}
\label{Emb}
\ \\
Suppose that \(k \geq 3\) is an integer and \(1< p \) is a real number.
If for given exponent \(0<\alpha<1\) the condition
\[
\alpha <  2 - \frac{n-1}{p}
\]
is true, then the embedding \(W^{3,p}(\Sigma) \subset C^{1,\alpha}(\Sigma)\) is compact. 
\end{lemma}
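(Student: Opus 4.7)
The strategy is to factor the claimed compact embedding as the composition of a compact Sobolev-to-Sobolev step, produced by Rellich--Kondrachov, with a continuous Morrey-type Sobolev-to-Hölder step.

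To set this up I would choose an auxiliary Lebesgue exponent $q\in(1,\infty)$ satisfying
\[
\frac{n-1}{1-\alpha}<q\quad\text{and}\quad \frac{1}{q}>\frac{1}{p}-\frac{1}{n-1},
\]
where the second constraint is vacuous whenever $p\ge n-1$. A short rearrangement shows that such a $q$ exists precisely when $\alpha<2-(n-1)/p$, so the standing hypothesis is exactly what makes room for this choice. The first constraint is the Morrey threshold for the second (continuous) step, the second constraint is the strict Sobolev scaling needed to make the first step compact via Rellich.

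Using Lemma~\ref{Int1} (applied to $\nabla_\co^{2}\varphi$ with $m=3$, $j=2$ and the interpolation exponent determined by the chosen $q$) I would first establish the continuous Sobolev embedding $W^{3,p}(\Sigma)\hookrightarrow W^{2,q}(\Sigma)$, after which the cited Rellich--Kondrachov theorem on the smooth compact hypersurface $\Sigma$ upgrades this to a compact embedding. Independently, the classical Morrey inequality, transferred from the Euclidean setting via a finite atlas of local parametrizations of $\Sigma$ together with a subordinate smooth partition of unity, yields the continuous embedding $W^{2,q}(\Sigma)\hookrightarrow C^{1,\alpha}(\Sigma)$; here the inequality $q>(n-1)/(1-\alpha)$ is precisely what forces $\nabla_\co\varphi\in W^{1,q}(\Sigma)$ to be $\alpha$-Hölder continuous. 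Composing the compact embedding into $W^{2,q}(\Sigma)$ with the continuous embedding into $C^{1,\alpha}(\Sigma)$ delivers the desired compactness of $W^{3,p}(\Sigma)\hookrightarrow C^{1,\alpha}(\Sigma)$.

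The main technical point, which I expect to be the principal obstacle to a clean write-up, is transferring the Euclidean Rellich--Kondrachov and Morrey inequalities to the hypersurface setting where the norms are phrased in terms of covariant derivatives rather than Euclidean ones. This is handled uniformly by working in a fixed finite atlas of parametrizations of $\Sigma$ and exploiting the smoothness and compactness of $\Sigma$: on each chart the covariant derivatives and the Euclidean derivatives of the chart representation of $\varphi$ differ only by smooth, uniformly bounded transition tensors, so the intrinsic Sobolev and Hölder norms are equivalent to the sum over charts of the corresponding Euclidean norms, and the Euclidean inequalities then patch through the partition of unity without further loss.
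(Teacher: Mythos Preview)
Your proposal is correct and is essentially the same approach as the paper, which simply indicates that the lemma follows from the Rellich--Kondrachov theorem (citing \cite[Thm 2.9]{He}) together with Lemma~\ref{Int1}; you have filled in exactly those details by factoring through an intermediate $W^{2,q}(\Sigma)$ with $q$ chosen so that the first step is compact by Rellich--Kondrachov and the second continuous by Morrey.
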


In particular, Lemma \ref{Emb} says that for \(n \leq 4\) and \(0<\alpha<\frac{1}{2}\) the embedding \(H^3(\Sigma) \subset C^{1,\alpha}(\Sigma)\) is compact.

\subsection*{Smooth sets}

An open set \(E \subset \To^n\), with \({\mathrm{int}(\overline E)} =  E\), is called a \emph{smooth set}, if the boundary \(\partial E\) is a smooth 
hypersurface. Then \(\partial E\) is always a smooth and compact hypersurfaces in \(\To^n\) so the previous 
results can be applied on \(\partial E\). For the boundary \(\partial E\) we always use the natural inside-out orientation denoted by \(\nu_E\). The \emph{classical divergence theorem} takes now the following form in \(\To^n\). For any Lipschitz function \(f:\To^n \rightarrow \mathbb R^n\)  
\[
\int_E \div \ f \ \d \H^n = \int_{\partial E} \langle f, \nu_E \rangle \ \d \H^{n-1}.
\]
Further, we denote by \(B_E\) the second fundamental form on \(\partial E\) associated with \(\nu_E\) and by \(H_{E}\) the corresponding boundary mean curvature. Then \(E\) is said to be 
critical, if \(\partial E\) is critical, i.e, \(H_E\) is constant.
 We use also the shorthand notation \(|E|\) for the volume \(\H^n(E)\).
 We recall that there exists 
a \emph{regular neighborhood} of \(\partial E\) say \(U_E\) such that the \emph{signed distance function} \(\bar d_E : \To^n \rightarrow \mathbb R\)
\[
\bar d_E(x) = 
\begin{cases}
\dist(x,\partial E), &x \in \To^n\setminus E \\
-\dist(x,\partial E), &x \in  E 
\end{cases}
\]
and the projection mapping \(\pi_{\partial E}\) onto \(\partial E\) are smooth on \(\bar U_E\) (in particular the latter is well-defined). Again, we may write \(\nu_E = \nabla \bar d_E\) and \(B_E= \D^2  \bar d_E\) on \(\partial E\).
The \(C^{k,\alpha}\)-sets are defined similarly.

For any \(C^k\)-sets \(E\) and \(E'\) in \(\To^n\) the
\(C^k\)-“distance"  is given by
\[
\|E,E'\|_{C^k} = \inf \{\|\Phi - \id\|_{C^k(\To^n;\To^n)} : \Phi \ \text{is a \(C^k\)-diffeomorphism with} \ \Phi(E)=E' \}.
\]

For a smooth set \(E\) and an open set \(E'\) we denote \(E'=E_\psi\) for some \(\psi \in C (\partial E)\), if we may write \(\partial E'\) as a \emph{graph of} \(\psi\) \emph{in normal direction over} \(\partial E\), that is,
\begin{equation}
\partial E' = \{x + \psi(x)\nu_E(x) :x \in \partial E\}
\end{equation}
and \(x+ (s+\psi(x))\nu_E(x) \in \To^n \setminus E'\) with a small positive \(s\) for every \(x \in \partial E\). 
Now small \(C^1\)-distance between \(E\) and a \(C^{k,\alpha}\)-set \(E'\) is equivalent to
the graph representation \(E'=E_\psi\) with a \(C^{k,\alpha}(\partial E)\)-map \(\psi\) having a small \(C^1\)-norm.

\begin{lemma}
\label{Lc1equivalence}
\ \\
Let \(E \subset \To^n\) be a smooth set. There exist positive constants \( C\geq 1\) and \(\delta\) depending on \(E\)
such that the set  \(\{y \in \To^n: |\bar d_E(y)| \leq \delta\}\) belongs to a regular neighborhood of \(\partial E\) and 
the following hold for any \(k \in \mathbb N \cup \{\infty\}\) and \(0\leq \alpha < 1\).
\begin{itemize}
\item[(i)] For any \(\psi \in C^{k,\alpha}(\partial E)\) with \(\|\psi\|_{C^1(\partial E)} \leq \delta\) the set \(E_\psi\) is defined as 
a \(C^{k,\alpha}\)-set, the map \(\Phi_\psi: \partial E \rightarrow \partial E_\psi\), given by
\[
\Phi_\psi(x) = x + \psi(x)\nu_E(x),
\]
 is \(C^{k,\alpha}\)-diffeomorphism and
\(
\|E,E_\psi\|_{C^1} \leq C \|\psi\|_{C^1(\partial E)}.
\)
\item[(ii)]
If \(E' \subset \To^n\) is a \(C^{k,\alpha}\)-set with \(\|E,E'\|_{C^1} \leq \delta\), then there is a unique map \(\psi \in C^{k,\alpha}(\partial E)\) for which
\(E' = E_\psi\) and
\(
 \|\psi\|_{C^1(\partial E)} \leq C \|E,E'\|_{C^1}.
\)
Moreover, if \((E'_t)_t\) is a smooth flow (see Definition \ref{smoothflow}) in \(\To^n\), with \(\|E,E'_t\|_{C^1} \leq \delta\), then the maps \(\psi_t\), with \(E'_t = E_{\psi_t}\), are smoothly parametrized on \(\partial E\).
\end{itemize}
\end{lemma}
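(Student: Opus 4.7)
The plan is to realise both directions of the correspondence $\psi \leftrightarrow E_\psi$ through ambient $C^{k,\alpha}$-diffeomorphisms of $\To^n$ that are close to the identity in $C^1$-sense, all anchored in a fixed regular neighbourhood of $\partial E$. To set the stage, fix $\delta_0 > 0$ small enough that $U_0 := \{|\bar d_E| < 2\delta_0\}$ lies in a regular neighbourhood, so the normal parametrisation $F(x,t) = x + t\nu_E(x)$ is a smooth diffeomorphism from $\partial E \times (-2\delta_0, 2\delta_0)$ onto $U_0$ and the maps $\bar d_E$, $\pi_{\partial E}$, $\nabla \bar d_E$ are smooth on $\bar U_0$. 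Pick a smooth cut-off $\eta : \To^n \to [0,1]$ with $\eta \equiv 1$ on $\{|\bar d_E| \leq \delta_0\}$ and $\eta \equiv 0$ outside $U_0$.

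For part (i), given $\psi \in C^{k,\alpha}(\partial E)$ with $\|\psi\|_{C^1(\partial E)} \leq \delta$, I would consider the ambient extension
\[
\Phi(y) := y + \eta(y)\, \psi(\pi_{\partial E}(y))\, \nabla \bar d_E(y) \quad \text{for } y \in U_0, \qquad \Phi(y) := y \text{ elsewhere.}
\]
Since $\eta$, $\pi_{\partial E}$, $\nabla \bar d_E$ are smooth and $\psi \in C^{k,\alpha}$, the map $\Phi$ is $C^{k,\alpha}$ and coincides with $\Phi_\psi$ on $\partial E$. A direct computation of the differential yields $\|\Phi - \id\|_{C^1(\To^n;\To^n)} \leq C \|\psi\|_{C^1(\partial E)}$, so for $\delta$ sufficiently small (depending only on $E$) the differential $\D \Phi$ is uniformly invertible and $\Phi$ is injective on the compact $\To^n$, hence a $C^{k,\alpha}$-diffeomorphism. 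The tangential differential of $\Phi_\psi$ reads $P_{\partial E} + \psi B_E + \nu_E \otimes \nabla_\tau \psi$ and is injective for small $\|\psi\|_{C^1}$, so $\Phi_\psi$ is a $C^{k,\alpha}$-embedding of $\partial E$ onto $\Phi(\partial E)$. Since $\Phi$ is $C^0$-close to the identity, it preserves the inside/outside orientation so that $\Phi(E) = E_\psi$, and $\|E,E_\psi\|_{C^1} \leq \|\Phi - \id\|_{C^1} \leq C \|\psi\|_{C^1(\partial E)}$ follows immediately.

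For part (ii), choose a $C^1$-diffeomorphism $\Psi$ with $\Psi(E) = E'$ and $\|\Psi - \id\|_{C^1} \leq 2 \|E, E'\|_{C^1} \leq 2\delta$. For $\delta$ small, $\partial E' = \Psi(\partial E) \subset U_0$ and $T := \pi_{\partial E} \circ \Psi|_{\partial E}$ is a $C^1$-diffeomorphism of $\partial E$ close to $\id_{\partial E}$. Each normal fibre $\{F(x,t) : |t| < 2\delta_0\}$ therefore meets $\partial E'$ in exactly one point of the form $x + \psi(x)\nu_E(x)$ with
\[
\psi(x) = \bar d_E\bigl(\Psi(T^{-1}(x))\bigr).
\]
Since $\bar d_E$ is $1$-Lipschitz and vanishes on $\partial E$, one gets $\|\psi\|_{C^0} \leq \|\Psi - \id\|_{C^0}$; a further chain-rule computation using $\nabla \bar d_E \perp T_x\partial E$ at every $x \in \partial E$ gives $\|\psi\|_{C^1(\partial E)} \leq C \|\Psi - \id\|_{C^1} \leq C \|E,E'\|_{C^1}$. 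The $C^{k,\alpha}$-regularity of $\psi$ then follows from the classical implicit function theorem in $C^{k,\alpha}$: near any $y_0 \in \partial E'$ write $\partial E'$ locally as a $C^{k,\alpha}$-graph in some orthonormal coordinates, compose with the smooth chart $F$, and solve for $t = \psi(x)$ from the resulting equation, which is transverse in $t$ because $\partial E'$ is nearly tangent to $\partial E$. For a smooth flow $(E'_t)_t$, the local defining equations depend smoothly on $t$, and the parametric version of the same implicit function theorem delivers smoothness of $\psi_t$ jointly in $x$ and $t$.

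The main technical hurdle is maintaining uniform transversality between $\partial E'$ and the normal fibres of $\partial E$, which is what allows one to pass from the global $C^1$-distance $\|E, E'\|_{C^1}$ to a well-defined single-valued normal graph $\psi$ and to control $\|\psi\|_{C^1(\partial E)}$ linearly by $\|E, E'\|_{C^1}$. Once transversality is secured, the $C^{k,\alpha}$-regularity transfer and the smooth dependence on the flow parameter reduce to routine applications of the implicit function theorem; the constant $C$ and the threshold $\delta$ therefore depend on $E$ only through the reach of $\partial E$ and the size of $B_E$.
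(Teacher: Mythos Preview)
Your proposal is correct and aligns with the paper's treatment. The paper does not give a full proof of this lemma: in the Appendix it sketches only part~(i) for smooth $\psi$ by extending $\Phi_\psi$ to a tubular neighbourhood via $\Phi_\psi(y)=y+\psi(\pi_{\partial E}(y))\nabla\bar d_E(y)$ and observing that $\D\Phi_\psi\to I$ uniformly as $\|\psi\|_{C^1(\partial E)}\to 0$, then explicitly leaves part~(ii) to the reader. Your argument for (i) is the same construction, with the useful extra detail of the cutoff $\eta$ to produce a genuine diffeomorphism of all of $\To^n$ (needed for the definition of $\|E,E_\psi\|_{C^1}$); your argument for (ii) via $\psi=\bar d_E\circ\Psi\circ T^{-1}$ and the implicit function theorem is the standard way to supply what the paper omits.
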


When for \(\psi \in C^\infty(\partial E)\) its \(C^1\)-norm is small enough, we may control \(B_{E_\psi}\) and represent \(H_{E_\psi}\) via \(\Phi_\psi\) on \(\partial E\) in the following way. 

\begin{lemma}
\label{LMC}
\ \\
Let \(E \subset \To^n\) be a smooth set. There are
constants \(\delta=\delta(E)\) and \(C=C(E)\) and smooth maps
\begin{align*}
A&:\partial E  \times [-\delta,\delta]\times[-\delta,\delta]^n \rightarrow T^2_0(\partial E),\\
Z&:\partial E \times [-\delta,\delta]\times[-\delta,\delta]^n \rightarrow T(\partial E) \ \ \ \text{and}\\
P&:\partial E \times [-\delta,\delta]\times[-\delta,\delta]^n \rightarrow \mathbb R
\end{align*}
depending on \(E\)  such that \(A(\ \cdot \ ,t,z)\) and \(Z(\ \cdot \ ,t,z)\)  are sections for every pair \((t,z) \in  [-\delta,\delta]\times[-\delta,\delta]^n \), \(A(\ \cdot \ ,0,0) = 0\) and the following hold.
If \(\psi \in C^\infty(\partial E)\) and \(\|\psi\|_{C^1(\partial E)} \leq \delta\), then 
\begin{align}
\notag
H_{E_\psi} \circ \Phi_\psi
&= - \Delta_\tau \psi + \langle A (\ \cdot \ ,\psi,\nabla_\tau \psi ),\nabla^2_\co \psi\rangle \\
\label{lMC1}
& +  \nabla_\co \psi\left(Z(\ \cdot \ ,\psi,\nabla_\tau \psi )\right) +\psi P (\ \cdot \ ,\psi,\nabla_\tau \psi ) + H_E  \ \ \ \text{and} \\
\label{lMC2}
|B_{E_\psi} \circ \Phi_\psi| &\leq C(1+|\D_\tau^2\psi|)
\end{align}
on \(\partial E\).
\end{lemma}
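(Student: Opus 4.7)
The plan is to represent $\partial E_\psi$ locally as the zero level set of a smooth auxiliary function and then read off both $H_{E_\psi}$ and $B_{E_\psi}$ from the standard divergence formulas for the unit normal. First I would shrink $\delta=\delta(E)>0$ so that Lemma~\ref{Lc1equivalence} applies and, in addition, $\Phi_\psi(\partial E)$ stays inside the regular neighborhood $U_E$ whenever $\|\psi\|_{C^1(\partial E)}\le\delta$. On $U_E$ I would define
\[
f(y)=\bar d_E(y)-\psi\bigl(\pi_{\partial E}(y)\bigr),
\]
so that $\partial E_\psi=f^{-1}(0)\cap U_E$ and the outward unit normal is $\nu_{E_\psi}=\nabla f/|\nabla f|$. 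All of the smoothness and domain structure of the maps $A,Z,P$ will ultimately come from the smoothness of $\bar d_E$, $\pi_{\partial E}$, $B_E$, and the smooth dependence of matrix inversion and square roots on their arguments.

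Second, I would evaluate everything at $y=\Phi_\psi(x)=x+\psi(x)\nu_E(x)$. Using $\nabla\bar d_E(y)=\nu_E(x)$ and the fact that $\D\pi_{\partial E}(y)$ acts on tangent variations as $(\id+\psi(x)B_E(x))^{-1}$, a short computation gives
\[
\nabla f(y)=\nu_E(x)-(\id+\psi(x)B_E(x))^{-1}\nabla_\tau\psi(x),
\]
so $|\nabla f(y)|^2=1+|(\id+\psi B_E)^{-1}\nabla_\tau\psi|^2$ is a smooth, strictly positive function of $(\psi,\nabla_\tau\psi)$. Inserting this into
\[
H_{E_\psi}=\frac{\Delta f}{|\nabla f|}-\frac{\D^2 f(\nabla f,\nabla f)}{|\nabla f|^3}
\]
and expanding $\Delta f=\Delta\bar d_E-\Delta(\psi\circ\pi_{\partial E})$ (and similarly for $\D^2 f$), the second-order derivatives of $\psi$ enter only through the terms $\Delta(\psi\circ\pi_{\partial E})$ and $\D^2(\psi\circ\pi_{\partial E})$. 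A chain-rule computation then shows that the leading second-order contribution from $\psi\circ\pi_{\partial E}$ is exactly $\Delta_\tau\psi$, while every remaining second-order term comes with a coefficient that vanishes at $(\psi,\nabla_\tau\psi)=(0,0)$. After dividing by $|\nabla f|$ and collecting first-order, zeroth-order and $\bar d_E$-contributions, I obtain (\ref{lMC1}): the first-order residues feed into $\nabla_\co\psi(Z)$, the zeroth-order residues into $\psi P$, and the $\bar d_E$-piece supplies the constant $H_E$ together with smooth additions to $Z$ and $P$, while the Hessian-coefficient $A(\cdot,\psi,\nabla_\tau\psi)$ satisfies $A(\cdot,0,0)=0$ by construction.

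For the estimate (\ref{lMC2}) I would tangentially differentiate $\nu_{E_\psi}=\nabla f/|\nabla f|$ at $y=\Phi_\psi(x)$: the only terms carrying $\D^2_\tau\psi$ are linear in it, with coefficients smooth in $(\psi,\nabla_\tau\psi)$ and hence uniformly bounded under $\|\psi\|_{C^1(\partial E)}\le\delta$, whereas the remaining zeroth- and first-order terms in $\psi$ are bounded by the same mechanism. This yields $|B_{E_\psi}\circ\Phi_\psi|\le C(1+|\D^2_\tau\psi|)$. The main obstacle is the careful algebraic bookkeeping needed to show that the leading second-order coefficient in $H_{E_\psi}\circ\Phi_\psi$ reduces exactly to $-\Delta_\tau$ at $\psi=0$, i.e.\ that $A(\cdot,0,0)=0$; once this linearization has been correctly identified, the remaining decomposition into the smooth sections $A$, $Z$, $P$ on $\partial E\times[-\delta,\delta]\times[-\delta,\delta]^n$ is a routine packaging of smooth expressions in $(x,\psi(x),\nabla_\tau\psi(x))$.
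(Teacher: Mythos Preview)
Your proposal is correct and follows essentially the same level-set strategy as the paper's proof in the Appendix: represent $\partial E_\psi$ as the zero set of a smooth function, compute $B_{E_\psi}$ and $H_{E_\psi}$ from its gradient and Hessian, and Taylor-expand around $(\psi,\nabla_\tau\psi)=(0,0)$ to isolate the leading $-\Delta_\tau\psi$ term and package the remainder into $A,Z,P$. The only difference is the choice of defining function---the paper uses $u_\psi=\bar d_E\circ\Phi_\psi^{-1}$ while you use $f=\bar d_E-\psi\circ\pi_{\partial E}$---but since both vanish exactly on $\partial E_\psi$ and have the same gradient $\nu_E-(I+\psi B_E)^{-1}\nabla_\tau\psi$ there, the resulting curvature expressions coincide and the arguments are interchangeable.
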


Moreover, with the same \(\delta\) and \(C\) we may assume that the following holds. If \(\psi \in C^\infty(\partial E)\) with \(\|\psi\|_{C^1(\partial E)} \leq \delta\), then for every \(1\leq p < \infty\), \(h \in L^p(\partial E_\psi)\) and \(\varphi \in C^\infty(\partial E_\psi)\)
\begin{align}
\label{unifcontrol1a}
C^{-1} \| h \circ \Phi_\psi\|_{L^p(\partial E)} &\leq  \|h\|_{L^p(\partial E_\psi)} \leq C \| h \circ \Phi_\psi\|_{L^p(\partial E)} \ \ \ \text{and} \\
\label{unifcontrol1b}
C^{-1} \|\nabla_\tau(\varphi \circ \Phi_\psi)\|_{L^p(\partial E)} &\leq  \|\nabla_\tau \varphi\|_{L^p(\partial E_\psi)} \leq C \|\nabla_\tau(\varphi \circ \Phi_\psi)\|_{L^p(\partial E)}.
\end{align}

Finally we need the following uniform estimates for low dimensions. The first one says that the first estimate of Remark \ref{H3Remark}  holds with a uniform constant 
when slightly varying a reference boundary in \(C^1\)-sense.
 
\begin{lemma}
\label{LC1unifcontrol}
\ \\
Let \(E \subset \To^n\), \(n=3,4\), be a smooth set.
There exist positive constants \(\delta\) and \(C\) depending on \(E\) such that  
if \(\psi \in C^\infty(\partial E)\) with \(\|\psi\|_{C^1(\partial E)} \leq \delta\), then for every \(1\leq p \leq 6\) and \(\varphi \in C^\infty(\partial E_\psi)\)
\begin{equation*}
\|\varphi\|_{L^p(\partial E_\psi)} \leq C \|\varphi\|_{H^1(\partial E_\psi)}.
\end{equation*}
\end{lemma}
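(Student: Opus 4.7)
The plan is to reduce the estimate on $\partial E_\psi$ to the corresponding Sobolev embedding on the fixed reference boundary $\partial E$ by pulling back along the diffeomorphism $\Phi_\psi : \partial E \to \partial E_\psi$ provided by Lemma \ref{Lc1equivalence}, and then to exploit that the constants in \eqref{unifcontrol1a} and \eqref{unifcontrol1b} of Lemma \ref{LMC} are uniform in $\psi$ as long as $\|\psi\|_{C^1(\partial E)} \leq \delta$. The key observation is that Remark \ref{H3Remark} already gives, for $n=3,4$, the embedding $H^1(\partial E) \hookrightarrow L^p(\partial E)$ for $1 \leq p \leq 6$, with a constant depending only on the fixed hypersurface $\partial E$.

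First I would choose $\delta$ small enough that both Lemma \ref{Lc1equivalence} and Lemma \ref{LMC} apply. Given $\psi$ with $\|\psi\|_{C^1(\partial E)} \leq \delta$ and $\varphi \in C^\infty(\partial E_\psi)$, I would set $u := \varphi \circ \Phi_\psi \in C^\infty(\partial E)$ and chain the following estimates: by the upper bound in \eqref{unifcontrol1a},
\[
\|\varphi\|_{L^p(\partial E_\psi)} \leq C \, \|u\|_{L^p(\partial E)};
\]
by Remark \ref{H3Remark} applied on the fixed hypersurface $\partial E$,
\[
\|u\|_{L^p(\partial E)} \leq C(E) \, \|u\|_{H^1(\partial E)} = C(E)\bigl(\|u\|_{L^2(\partial E)}^2 + \|\nabla_\tau u\|_{L^2(\partial E)}^2\bigr)^{1/2};
\]
and finally, by the lower bounds in \eqref{unifcontrol1a} and \eqref{unifcontrol1b} (taking $p=2$ there),
\[
\|u\|_{L^2(\partial E)} \leq C \, \|\varphi\|_{L^2(\partial E_\psi)}, \qquad \|\nabla_\tau u\|_{L^2(\partial E)} \leq C \, \|\nabla_\tau \varphi\|_{L^2(\partial E_\psi)}.
\]
Combining the three displays and enlarging the constant yields $\|\varphi\|_{L^p(\partial E_\psi)} \leq C \|\varphi\|_{H^1(\partial E_\psi)}$, where $C$ depends only on $E$.

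There is no real obstacle here because the dimensional restriction $n \leq 4$ is exactly what makes Remark \ref{H3Remark} applicable for the range $1 \leq p \leq 6$, and the $\psi$-uniformity is delegated to Lemma \ref{LMC}. The only minor point of care is verifying that we use the full $H^1$-norm on $\partial E_\psi$ (i.e.\ with the tangential gradient taken with respect to $\partial E_\psi$, not $\partial E$), which is precisely what \eqref{unifcontrol1b} compares to $\|\nabla_\tau u\|_{L^2(\partial E)}$; this alignment is what makes the pullback argument close up cleanly.
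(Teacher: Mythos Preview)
Your proof is correct and follows exactly the approach the paper takes: in the Appendix the paper simply states that Lemma \ref{LC1unifcontrol} is a direct consequence of \eqref{unifcontrol1a}, \eqref{unifcontrol1b} and Remark \ref{H3Remark}, and you have spelled out precisely that chain of inequalities. One trivial quibble of attribution: \eqref{unifcontrol1a} and \eqref{unifcontrol1b} are stated in the paper immediately after Lemma \ref{LMC} rather than as part of its conclusion, but this does not affect the argument.
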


The second one says that  we can control uniformly \(L^p\)-norm of \(\nabla_\tau \varphi\) (up to \(p\leq 6\)) by the \(L^2\)-norm of \(\Delta_\tau \varphi\) and the \(H^1\)-norm of \(\varphi\) when slightly varying a reference boundary in \(H^3\)-sense.

\begin{lemma}
\label{LH3unifcontrol}
\ \\
Let \(E \subset \To^n\), \(n=3,4\), be a smooth set. There are
constants \(\delta\) and \(C\) depending on \(E\) such that if \(\psi \in C^\infty(\partial E)\) and \(\|\psi\|_{C^1(\partial E)} \leq \delta\), then 
for every \(1\leq p \leq 6\) and \(\varphi \in C^\infty(\partial E_\psi)\)
\begin{align*}
\|\nabla_\tau \varphi\|_{L^p(\partial E_\psi)} \leq C \left(\|\Delta_\tau \varphi\|_{L^2(\partial E_\psi)} +  \|\varphi\|_{H^1(\partial E_\psi)}\right).
\end{align*}
\end{lemma}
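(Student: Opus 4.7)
\medskip

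The strategy is to pull the inequality back to the fixed reference surface $\partial E$ via $\Phi_\psi$ and then close it by a small perturbation argument based on the standard Calderón--Zygmund $W^{1,p}$-regularity for the Laplace--Beltrami operator on $\partial E$. The decisive fact --- which is precisely what the $C^1$-smallness of $\psi$ delivers --- is that even though the curvature $B_{E_\psi}$ (equivalently $\nabla^2\psi$) is uncontrolled, the \emph{coefficient matrix} of the pulled-back Laplacian differs from the identity only by an $L^\infty$-small amount.

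Concretely, set $u:=\varphi\circ\Phi_\psi\in C^\infty(\partial E)$. By \eqref{unifcontrol1a}, \eqref{unifcontrol1b}, together with the uniform two-sided bounds on $J_\tau\Phi_\psi$ when $\|\psi\|_{C^1}\le\delta$, the desired inequality is equivalent (up to constants depending only on $E$) to
\[
\|\nabla_\tau u\|_{L^p(\partial E)} \;\le\; C\bigl(\|\Delta_h u\|_{L^2(\partial E)} + \|u\|_{H^1(\partial E)}\bigr) \qquad (1\le p\le 6),
\]
where $h:=\Phi_\psi^* g_{\partial E_\psi}$ is the pullback metric on $\partial E$ and $\Delta_h$ its Laplace--Beltrami operator (note $\Delta_h u = (\Delta_\tau\varphi)\circ\Phi_\psi$ since $\Phi_\psi$ is then an isometry $(\partial E,h)\to(\partial E_\psi,g_{\partial E_\psi})$). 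A direct computation in a local orthonormal frame on $\partial E$, using $\partial_i\Phi_\psi = e_i + (\partial_i\psi)\nu_E + \psi\,B_E e_i$, gives
\[
h_{ij} - g_{ij} \;=\; 2\psi\,B_E(e_i,e_j) + (\partial_i\psi)(\partial_j\psi) + \psi^2 (B_E^2)_{ij},
\]
which contains \emph{no} $\nabla^2\psi$ term; hence $\|h-g\|_{L^\infty(\partial E)}\le C\|\psi\|_{C^1(\partial E)}\le C\delta$. Setting $\sigma:=\sqrt{\det h/\det g}$ and $A:=\sigma\,h^{-1} g\in L^\infty(\mathrm{End}(T\partial E))$ (a $g$-symmetric, uniformly elliptic endomorphism field), one has $\|A-\id\|_{L^\infty(\partial E)}\le C\delta$, and the equation $\Delta_h u = f$ with $f:=(\Delta_\tau\varphi)\circ\Phi_\psi$ rewrites in divergence form with respect to $g$ as
\[
\div_\tau\!\bigl(A\,\nabla_\tau u\bigr) = \sigma f, \qquad \text{i.e.,}\qquad \Delta_\tau u = \sigma f + \div_\tau\!\bigl((\id-A)\,\nabla_\tau u\bigr).
\]

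Now apply the standard Calderón--Zygmund $W^{1,p}$-estimate for $\Delta_\tau$ on the smooth closed manifold $\partial E$, namely
\[
\|u\|_{W^{1,p}(\partial E)} \;\le\; C_p\bigl(\|\Delta_\tau u\|_{W^{-1,p}(\partial E)} + \|u\|_{L^p(\partial E)}\bigr), \qquad 1<p<\infty.
\]
Combine it with the perturbative bound $\|\div_\tau((\id-A)\nabla_\tau u)\|_{W^{-1,p}(\partial E)}\le\|(\id-A)\nabla_\tau u\|_{L^p(\partial E)}\le C\delta\|\nabla_\tau u\|_{L^p(\partial E)}$, and with the Sobolev duality $L^2(\partial E)\hookrightarrow W^{-1,p}(\partial E)$ (valid for every $p\le 6$ because $\dim\partial E\le 3$), which yields $\|\sigma f\|_{W^{-1,p}(\partial E)}\le C\|f\|_{L^2(\partial E)}$. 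One obtains
\[
\|\nabla_\tau u\|_{L^p(\partial E)} \;\le\; C_p\bigl(\|f\|_{L^2(\partial E)} + C\delta\,\|\nabla_\tau u\|_{L^p(\partial E)} + \|u\|_{L^p(\partial E)}\bigr).
\]
For $\delta$ small enough --- depending only on $E$ and on the endpoint $p=6$ --- the $\|\nabla_\tau u\|_{L^p}$ term on the right is absorbed into the left. Lemma \ref{LC1unifcontrol} together with the pullback equivalences gives $\|u\|_{L^p(\partial E)}\le C\|u\|_{H^1(\partial E)}\le C\|\varphi\|_{H^1(\partial E_\psi)}$, and translating the resulting inequality back to $\partial E_\psi$ via \eqref{unifcontrol1a}, \eqref{unifcontrol1b} closes the argument.

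The principal difficulty lies in the perturbative closure of the Calderón--Zygmund estimate: because $\psi$ is only $C^1$, the metric $h$ is merely $C^0$, so no $W^{2,p}$-regularity is directly available for $u$ and any approach through the Bochner identity on $\partial E_\psi$ (or through \eqref{llaplace1}) would require controlling $\int |B_{E_\psi}|^2|\nabla_\tau\varphi|^2$, which is impossible under $C^1$-smallness alone. What rescues the argument is that $\id-A$ is itself $L^\infty$-small, so $\div_\tau((\id-A)\nabla_\tau u)$ can be treated as a right-hand side in $W^{-1,p}$ rather than in $L^p$ and the absorption works with no bound on any derivative of $A$ (equivalently, of $\nabla\psi$). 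The constraint $p\le 6$, and correspondingly $n\le 4$, is precisely the threshold at which Sobolev duality converts the $L^2$-control of $\Delta_\tau\varphi$ into the $W^{-1,p}$-control demanded by the elliptic estimate on the at most three-dimensional manifold $\partial E$; smaller $p$ follows by Hölder.
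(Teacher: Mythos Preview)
Your proof is correct and takes a genuinely different route from the paper's. The paper proceeds by explicitly expanding $\D_\tau^2(\varphi\circ\Phi_\psi)$ in terms of $\D_\tau^2\varphi\circ\Phi_\psi$ and lower-order terms involving $\D_\tau^2\psi$, applies the Gagliardo--Nirenberg interpolation (Lemma~\ref{Int1}) on the fixed surface $\partial E$, and then uses the Bochner-type estimate \eqref{llaplace1} on $\partial E_\psi$ to replace $\|\D_\tau^2\varphi\|_{L^2}$ by $\|\Delta_\tau\varphi\|_{L^2}$ plus a curvature term. Exactly as you anticipate in your last paragraph, that route forces one to control $\int_{\partial E_\psi}|B_{E_\psi}|^2|\nabla_\tau\varphi|^2$; the paper does so via H\"older, \eqref{lMC2}, and finally absorbs the resulting factor $\|\D_\tau^2\psi\|_{L^3(\partial E)}$ by invoking $\|\psi\|_{H^3(\partial E)}$ through Remark~\ref{H3Remark}. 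In other words the paper's argument actually relies on $H^3$-smallness of $\psi$ (consistent with the prose preceding the lemma and the last line of the proof in the Appendix), not merely on the $C^1$-bound that the statement literally records. Your Meyers-type perturbation, working at the $W^{-1,p}$ level so that only the $L^\infty$-smallness of $\id-A$ and no derivative of $A$ is needed, establishes the lemma under the literal $C^1$ hypothesis. Since the lemma is only ever applied in the paper under an ambient $H^3$-smallness assumption (cf.\ \eqref{M1p7}), both proofs are adequate for the downstream use, but yours matches the stated hypothesis and is cleaner in that it never touches the second fundamental form of $\partial E_\psi$.
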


Lemma \ref{LC1unifcontrol} and Lemma \ref{LH3unifcontrol} hold also in the case \(n>4\), if we replace the upper bound \(6\) with smaller number depending on \(n\). This number converges to \(2\) by above as \(n\) tends to infinity.
We will prove these results except Lemma \ref{Lc1equivalence} in Appendix.

\subsection*{Volume preserving mean curvature flow and strictly stable sets}

Let us first give the formal definition of smooth flow in this setting.
\begin{defi}[\textbf{Smooth flow}]
\label{smoothflow}
\ \\
The parametrized family \((E_t)_{t \in [0,T[}\) of smooth sets in \(\To^n\) with \(0<T\leq\infty\) is a smooth flow with an \emph{initial set} or \emph{initial datum}  \(E_0\), if there 
exists a smooth map \(\Phi: \To^n \times [0,T[ \ \rightarrow \To^n\) such that \(\Phi_0 = \id_{\To^n}\), \(\Phi_t := \Phi( \ \cdot, t)\) is a (smooth) diffeomorphism  and 
\(E_t = \Phi_t(E_0)\) for every \(t \in [0, T[\). Again, for every \(0\leq t < T\), the (outer) normal velocity of the flow on \(\partial E_t\) at the time \(t\) is defined by setting
\[
V_t = \langle \partial_s \Phi_{s+t} \Big |_{s=0} \circ \Phi_t^{-1}, \nu_{E_t} \rangle.
\]   
\end{defi}

The normal velocity \(V_0\) on \(\partial E_0\) is called an \emph{initial velocity}. If we do not emphasize the time interval \([0,T[\), we write \((E_t)_t\) for short. Also, for the unit normal field \(\nu_{E_t}\) the corresponding second fundamental form \(B_{E_t}\) and the boundary mean curvature  \(H_{E_t}\) we use the shorthand notations 
\(\nu_t\), \(B_t\) and \(H_t\), when there 
is no possibility of confusion. In the previous definition \(\Phi\) is called a \emph{smoothly parametrized family of diffeomorphisms} and we may suggestively denote it by \((\Phi_t)_{t \in [0,T[}\) or \((\Phi_t)_t\). The normal velocity \(V_t\) on \(\partial E_t\) does not depend on the choice of the parametrization \(\Phi\).
Recall the \emph{first variation of volume} under the flow 
\begin{equation}
\label{1stvarV}
\frac{\d}{\d t} |E_t| = \int_{\partial E_t} V_t \ \d \H^{n-1}
\end{equation}
and again  the \emph{first variation of perimeter}
\begin{equation}
\label{1stvarP}
\frac{\d}{\d t} |\partial E_t| = \int_{\partial E_t} V_t H_t \ \d \H^{n-1}.
\end{equation}

We say that \((E_t)_t\) is \emph{volume preserving}, if \(|E_t|\) is a constant function in time. According to \eqref{1stvarV} this is possible exactly when \(V_t\) has a vanishing integral over \(\partial E_t\) for every \(t\). Conversely, one may show that if \(E\) is a smooth set in \(\To^n\) and 
\(\varphi \in \tilde C^\infty(\partial E)\), there is a smooth volume preserving flow \((E_t)_t\) starting from \(E\) such that
 initial velocity on \(\partial E\) is \(\varphi\).
Then, by using a simple approximation argument, it follows from \eqref{1stvarP} that a smooth set \(E\) is critical if and only if for every smooth volume preserving flow \((E_t)_t\) starting from \(E\)
\[
\frac{\d}{\d t} |\partial E_t| \Big |_{t=0} = 0. 
\]

Further, if in the previous setting the flow admits a parametrization \(\Phi\) autonomous with respect to time, i.e., 
\(\partial_t \Phi = X(\Phi)\) with a \(X \in C^\infty(\To^n;\mathbb R^n)\), then the \emph{second variation of perimeter} at \(t=0\) is
\[
\frac{\d^2}{\d t^2} |\partial E_t| \Big |_{t=0} = \int_{\partial E} |\nabla_\tau V_0|^2 - |B_E|^2V_0^2 \ \d \H^{n-1},
\] 
see \cite[Remark 3.3]{AFM}. This motivates us to define for every smooth \(E\) in \(\To^n\) the quadratic form \(\partial^2 P(E) : \tilde H^1 (\partial E) \rightarrow \mathbb R\), where \(\tilde H^1 (\partial E)\) is the space of  \(H^1 (\partial E)\)-maps with vanishing integral over \(\partial E\), by setting first for every \(\varphi \in \tilde C^\infty(\partial E)\) 
\[
\partial^2 P(E) [\varphi] = \int_{\partial E} |\nabla_\tau \varphi|^2 - |B_E|^2\varphi^2 \ \d \H^{n-1}
\]
and then extending \(\partial^2 P(E)\) to  \(\tilde H^1 (\partial E)\) in the obvious way. We say that a smooth and critical set \(E\) in \(\To^n\) is stable, if 
\(\partial^2 P(E)(\varphi)\geq 0\) for every \(\varphi \in \tilde H^1(\partial E)\).

For every smooth set \(E\) in \(\To^n\) the space of \emph{infinitesimal translations} on \(\partial E\) is given by
\[
T(\partial E) = \{ \langle \nu_E, p \rangle : p \in \mathbb R^n\}. 
\]  
Notice that \(T(\partial E) \subset \tilde C^\infty(\partial E)\).
These maps correspond to the initial velocities of the linear translations along a vector. It follows from \cite[Theorem 3.1]{AFM} that 
if \(E_t = E +tp\), then the second variation at \(t=0\) of perimeter under this translation is \(\partial^2 P(E)[\langle \nu_E, p \rangle]\).
Since translations do not change perimeter, then \(\partial^2 P(E)[\langle \nu_E, p \rangle] = 0\). Thus
\(\partial^2 P(\partial E)\) is always zero on \(T(\partial E)\), which is also easy to compute directly by using the definition.
This leads us to define the strictly stable sets in the following way, see \cite{AFM}.

\begin{defi}[\textbf{Strictly stable set}]
\label{StricSet}
\ \\ 
A smooth and critical set \(F\) in \(\To^n\) is strictly stable, if 
\(\partial^2 P(E) [\varphi]>0\) for every nonzero map \(\varphi \in \tilde H^1 (\partial F) \cap T^{\perp,L^2}(\partial F)\), where
\( T^{\perp,L^2}(\partial F)\) is the orthogonal complement of \(T(\partial F)\) in \(L^2(\partial F)\). 
\end{defi}
In the previous definition, we may replace the condition \(\partial^2 P(E) [\varphi]>0\) for every nonzero 
map \(\varphi \in \tilde H^1 (\partial F) \cap T^{\perp,L^2}(\partial F)\) with \(\partial^2 P(E) [\varphi]>0\) for every map \(\varphi \in \tilde H^1 (\partial F) \setminus T(\partial F)\) due to the fact that \(\Delta_\tau \langle \nu_F, p \rangle = -|B_F|^2\langle \nu_F, p \rangle\) for every \(p \in \mathbb R^n\).
As mentioned already in  Introduction such sets are always isolated local perimeter minimizers. For a strictly stable set \(F\) any critical set with the same volume being close enough to \(F\) in \(H^3\)-sense 
is a translate of \(F\).
\begin{lemma}
\label{Ltransl0}
\ \\
Let \(F \subset \To^n\), \(n=3,4\), be a strictly stable set. There exists a positive \(\delta = \delta (F)\) such that if 
\(F_\psi\) is critical, \(|F_\psi| = |F|\) and \(\|\psi\|_{H^3(\partial F)} \leq \delta_1 \) for \(\psi \in C^\infty(\partial F)\), then
\(F_\psi\) is a translate of \(F\).
\end{lemma}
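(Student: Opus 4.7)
The plan is a compactness/contradiction argument in the spirit of \cite[Theorem 1.1]{AFM}. Suppose the conclusion fails: there is a sequence $\psi_k \in C^\infty(\partial F)$ with $\|\psi_k\|_{H^3(\partial F)} \to 0$, each $F_{\psi_k}$ critical with $|F_{\psi_k}| = |F|$, yet no $F_{\psi_k}$ is a translate of $F$. Remark \ref{H3Remark} and Lemma \ref{Emb} give $\psi_k \to 0$ in $C^{1,\alpha}$ as well, so Lemmas \ref{Lc1equivalence} and \ref{LMC} apply uniformly along the sequence.

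First I normalize by translation. For each large $k$ pick $p_k \in \mathbb{R}^n$ with $|p_k|$ small, minimizing $\|\tilde\psi_k\|_{L^2(\partial(F+p_k))}$ over $p$ in a fixed small ball, where $\tilde\psi_k$ is the (unique small) height function such that $F_{\psi_k} = (F+p_k)_{\tilde\psi_k}$, provided by Lemma \ref{Lc1equivalence}. Since $F_{\psi_k}$ is not a translate of $F$, $\tilde\psi_k \not\equiv 0$; moreover $p_k \to 0$ and, after pulling back to $\partial F$ via translation, $\tilde\psi_k \to 0$ in $H^3(\partial F)$. The first-order minimality condition at $p_k$ yields
\[
\int_{\partial(F+p_k)} \tilde\psi_k\, \langle \nu_{F+p_k}, q\rangle\, \d\H^{n-1} = o(\|\tilde\psi_k\|_{L^2})\quad\text{for every } q\in\mathbb{R}^n,
\]
i.e., asymptotic $L^2$-orthogonality of $\tilde\psi_k$ to $T(\partial(F+p_k))$. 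Set $\phi_k := \tilde\psi_k/\|\tilde\psi_k\|_{H^3(\partial F)}$, so $\|\phi_k\|_{H^3(\partial F)} = 1$.

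Plugging $\tilde\psi_k$ into Lemma \ref{LMC} and using the well-known identity that the linearization of $\psi \mapsto H_{E_\psi}\circ \Phi_\psi - H_E$ at $\psi = 0$ is the Jacobi operator $-\Delta_\tau \psi - |B_E|^2 \psi$, the criticality condition $H_{F_{\psi_k}}\equiv \lambda_k$ translates into
\[
-\Delta_\tau \tilde\psi_k - |B_F|^2 \tilde\psi_k = c_k + N_k,\qquad c_k := \lambda_k - H_F \in \mathbb{R},
\]
where $N_k$ is at least quadratic in $(\tilde\psi_k,\nabla_\tau\tilde\psi_k,\nabla^2_\co\tilde\psi_k)$. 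Remark \ref{H3Remark} controls the products, giving $\|N_k\|_{L^2(\partial F)} = O(\|\tilde\psi_k\|_{H^3}^2)$, so $N_k/\|\tilde\psi_k\|_{H^3} \to 0$ in $L^2$. Integrating the rescaled equation shows $c_k/\|\tilde\psi_k\|_{H^3}$ is bounded; passing to a subsequence, $\phi_k \rightharpoonup \phi$ weakly in $H^3$, strongly in $H^2$, and $c_k/\|\tilde\psi_k\|_{H^3} \to c \in \mathbb{R}$, yielding
\[
-\Delta_\tau \phi - |B_F|^2 \phi = c \qquad \text{on } \partial F.
\]
Expanding $|F_{\psi_k}| = |F|$ to second order gives $\int_{\partial F}\tilde\psi_k\, \d\H^{n-1} = O(\|\tilde\psi_k\|_{L^2}^2)$, hence $\phi \in \tilde H^1(\partial F)$. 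Pairing the limit equation with $\phi$ and integrating by parts,
\[
\partial^2 P(F)[\phi] = \int_{\partial F}\bigl(|\nabla_\tau \phi|^2 - |B_F|^2 \phi^2\bigr)\, \d \H^{n-1} = c\int_{\partial F}\phi\, \d\H^{n-1} = 0.
\]
Strict stability (Definition \ref{StricSet} together with the equivalent formulation noted immediately after) forces $\phi \in T(\partial F)$; combined with the $L^2$-orthogonality $\phi \perp T(\partial F)$ inherited from the minimality of $p_k$, this gives $\phi \equiv 0$.

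The main obstacle is that $\phi \equiv 0$ does not yet contradict $\|\phi_k\|_{H^3} = 1$, because the convergence is only weak in $H^3$. To upgrade it to strong $H^3$-convergence, rewrite the rescaled equation as
\[
-\Delta_\tau \phi_k = |B_F|^2 \phi_k + \frac{c_k}{\|\tilde\psi_k\|_{H^3}} + \frac{N_k}{\|\tilde\psi_k\|_{H^3}}.
\]
The essential technical point is that $N_k/\|\tilde\psi_k\|_{H^3}$ remains bounded in $H^1(\partial F)$ with norm $o(1)$: each summand of $N_k$ is a product whose factors involve $\tilde\psi_k, \nabla_\tau \tilde\psi_k, \nabla^2_\co \tilde\psi_k$, and the requisite $L^p$-control of such products together with their tangential derivatives follows from Remark \ref{H3Remark} and the interpolation Lemma \ref{Int1}; this is precisely where the dimension hypothesis $n \leq 4$ is used. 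Granted this, the standard elliptic $H^3$-regularity estimate for $-\Delta_\tau$ on the smooth closed hypersurface $\partial F$ yields $\|\phi_k\|_{H^3} \leq C(\|\Delta_\tau \phi_k\|_{H^1} + \|\phi_k\|_{L^2})$, and then the strong convergence $\phi_k \to 0$ in $L^2$ together with the $H^1$-boundedness of the right-hand side of the rescaled equation forces $\phi_k \to 0$ in $H^3$, contradicting $\|\phi_k\|_{H^3} = 1$.
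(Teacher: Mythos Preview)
Your argument is correct and is essentially the approach the paper invokes: the paper does not give its own proof but refers to the proof of \cite[Theorem 3.9]{AFM} (and \cite[Proposition 2.7]{AFJM}) together with Remark \ref{H3Remark}, and those arguments proceed exactly by contradiction, translation-normalization to achieve $L^2$-orthogonality to $T(\partial F)$, linearization of the constant mean curvature equation to the Jacobi operator, and strict stability to force the normalized limit to vanish. The only cosmetic difference is that in \cite{AFM} the orthogonality $\tilde\psi_k \perp T(\partial(F+p_k))$ is obtained via the implicit function theorem rather than by minimizing $\|\tilde\psi\|_{L^2}$, which gives exact rather than asymptotic orthogonality; either route yields $\phi\perp T(\partial F)$ in the limit, so this does not affect the outcome.
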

This result follows from the proof of \cite[Theorem 3.9]{AFM} (for clarity see the proof of \cite[Proposition 2.7]{AFJM} although it concerns only the case \(n=3\)) and Remark \ref{H3Remark}.
Moreover, being near to a strictly stable set in \(H^3\)-sense implies that the quadratic form controls \(H^1\)-norm for the \(\tilde C^\infty\)-functions 
orthogonal to the infinitesimal translations in \(L^2\)-sense, see
\cite[Lemma 2.6]{AFJM}.
\begin{lemma}
\label{Lsvar}
\ \\
Let \(F \subset \To^n\), \(n=3,4\), be a strictly stable set. There exist \(\sigma_1 = \sigma_1 (F)\) and \(\delta = \delta (F)\) such that if 
\(\|\psi\|_{H^3(\partial F)} \leq \delta_1 \) for \(\psi \in C^\infty(\partial F)\), then for every \(\varphi \in  \tilde C^\infty (\partial F_\psi) \cap  T^{\perp, L^2}(\partial F_\psi)\)
\begin{equation*}
 \sigma_1 \|\varphi\|^2_{H^1(\partial F_\psi)} \leq \partial^2 P(F_\psi)[\varphi].
\end{equation*}
\end{lemma}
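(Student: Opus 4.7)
The strategy is to upgrade strict stability on $\partial F$ to $H^1$-coercivity and then transfer it to nearby $\partial F_\psi$ by pullback through the diffeomorphism $\Phi_\psi$. As a preliminary, I would prove that there is a constant $2\sigma_1>0$ depending only on $F$ such that
\[
\partial^2 P(F)[v]\geq 2\sigma_1\,\|v\|_{H^1(\partial F)}^2
\]
for every $v\in\tilde H^1(\partial F)\cap T^{\perp,L^2}(\partial F)$. This is a routine compactness argument: take a minimizing sequence $\{v_k\}$ of unit $H^1$-norm, extract a weak $H^1$ and strong $L^2$ limit $v_\infty$ (Rellich on the smooth compact manifold $\partial F$), observe that $\tilde H^1\cap T^{\perp,L^2}$ is closed under such convergence, and argue by cases: if $v_\infty\neq 0$, strict stability combined with the weak lower semicontinuity of $\int|\nabla_\tau v|^2$ and the strong convergence of $\int|B_F|^2 v^2$ give a positive infimum; if $v_\infty=0$, the curvature term vanishes in the limit and $\partial^2 P(F)[v_k]\to 1$.

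Next, given $\varphi\in\tilde C^\infty(\partial F_\psi)\cap T^{\perp,L^2}(\partial F_\psi)$, set $\tilde\varphi:=\varphi\circ\Phi_\psi\in C^\infty(\partial F)$. The change of variable formula together with $J_\tau\Phi_\psi = 1+O(\|\psi\|_{C^1(\partial F)})$ and $\nu_{F_\psi}\circ\Phi_\psi = \nu_F + O(\|\psi\|_{C^1(\partial F)})$ translates the vanishing $L^2$-pairings of $\varphi$ with $1$ and with each $\langle\nu_{F_\psi},e_i\rangle$ on $\partial F_\psi$ into
\[
\Bigl|\int_{\partial F}\tilde\varphi\,\d\H^{n-1}\Bigr| + \sum_{i=1}^n \Bigl|\int_{\partial F}\tilde\varphi\,\langle\nu_F,e_i\rangle\,\d\H^{n-1}\Bigr| \leq C\,\|\psi\|_{C^1(\partial F)}\,\|\tilde\varphi\|_{L^2(\partial F)}.
\]
I then decompose $\tilde\varphi=v+w$ with $w$ the $L^2$-orthogonal projection onto the finite-dimensional space $\mathbb R\oplus T(\partial F)$, so that $v\in\tilde H^1(\partial F)\cap T^{\perp,L^2}(\partial F)$ and $\|w\|_{H^1(\partial F)}\leq C\,\|\psi\|_{C^1(\partial F)}\,\|\tilde\varphi\|_{L^2(\partial F)}$ by equivalence of norms on finite-dimensional spaces. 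Applying the coercivity to $v$, using $\partial^2 P(F)[\langle\nu_F,p\rangle]=0$ (consequence of $\Delta_\tau\langle\nu_F,p\rangle=-|B_F|^2\langle\nu_F,p\rangle$ and integration by parts, with the constant part harmless), and handling the cross terms by Cauchy--Schwarz, yields after shrinking $\delta$
\[
\int_{\partial F}|\nabla_\tau\tilde\varphi|^2 - |B_F|^2\tilde\varphi^2\,\d\H^{n-1} \geq \sigma_1\,\|\tilde\varphi\|_{H^1(\partial F)}^2.
\]

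It remains to compare the left-hand side with $\partial^2 P(F_\psi)[\varphi]$. After pulling back through $\Phi_\psi$ the difference of the two quadratic expressions becomes, thanks to Lemma \ref{LMC}, of the form
\[
\int_{\partial F}\bigl(a_\psi(\nabla_\tau\tilde\varphi,\nabla_\tau\tilde\varphi) - b_\psi\,\tilde\varphi^2\bigr)\,\d\H^{n-1}
\]
with $|a_\psi|\leq C\|\psi\|_{C^1(\partial F)}$ pointwise and $b_\psi=(|B_{F_\psi}|^2\circ\Phi_\psi)\,J_\tau\Phi_\psi-|B_F|^2$. By H\"older's inequality, the curvature piece is bounded by $\|b_\psi\|_{L^3(\partial F)}\|\tilde\varphi\|_{L^3(\partial F)}^2$, and Remark \ref{H3Remark} ($n\leq 4$) gives $\|\tilde\varphi\|_{L^3(\partial F)}\leq C\|\tilde\varphi\|_{H^1(\partial F)}$. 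Writing $|B_{F_\psi}|^2-|B_F|^2$ as $(B_{F_\psi}-B_F)\cdot(B_{F_\psi}+B_F)$ and using \eqref{lMC2} together with the embedding $\|\psi\|_{W^{2,6}(\partial F)}\leq C\|\psi\|_{H^3(\partial F)}$ of Remark \ref{H3Remark}, I get $\|b_\psi\|_{L^3(\partial F)}\leq C\|\psi\|_{H^3(\partial F)}$. Shrinking $\delta$ absorbs this error into the coercive term, so that $\partial^2 P(F_\psi)[\varphi]\geq (\sigma_1/2)\,\|\tilde\varphi\|_{H^1(\partial F)}^2$. The conclusion follows from the two-sided equivalence $\|\tilde\varphi\|_{H^1(\partial F)}\sim\|\varphi\|_{H^1(\partial F_\psi)}$, which is a direct consequence of \eqref{unifcontrol1a} and \eqref{unifcontrol1b}, possibly at the price of reducing $\sigma_1$.

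The main obstacle is the curvature comparison in the last step. Since $|B_{F_\psi}|^2$ depends on second derivatives of $\psi$, controlling $\|b_\psi\|_{L^3(\partial F)}$ by $\|\psi\|_{H^3(\partial F)}$ requires both $H^3(\partial F)\hookrightarrow W^{2,6}(\partial F)$ and $H^1(\partial F)\hookrightarrow L^3(\partial F)$ to hold on the hypersurface $\partial F$, and it is precisely this pair of Sobolev embeddings which forces the dimension restriction $n\leq 4$.
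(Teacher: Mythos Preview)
Your argument is correct and follows the standard perturbation scheme: coercivity of $\partial^2 P(F)$ on $\tilde H^1(\partial F)\cap T^{\perp,L^2}(\partial F)$ via compactness, approximate transfer of the orthogonality constraints through $\Phi_\psi$, and comparison of the two quadratic forms using that $|B_{F_\psi}|^2\circ\Phi_\psi-|B_F|^2$ is small in $L^3$ when $\psi$ is small in $W^{2,6}\supset H^3$. One minor point: to bound $\|b_\psi\|_{L^3}$ you need not only \eqref{lMC2} but also that $B_{F_\psi}\circ\Phi_\psi-B_F$ is pointwise $O(|\psi|+|\nabla_\tau\psi|+|\D^2_\tau\psi|)$; this is not literally \eqref{lMC2} but follows immediately from the explicit expressions \eqref{graphB}--\eqref{clear} in the Appendix.

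The paper itself does not give a proof of this lemma: it simply invokes \cite[Lemma~2.6]{AFJM}, which establishes the result for any $n$ under $W^{2,p}$--closeness with $p>\max\{n-1,2\}$, and then appeals to Remark~\ref{H3Remark} for the embedding $H^3(\partial F)\hookrightarrow W^{2,p}(\partial F)$ with $p\le 6$, which suffices precisely when $n\le 4$. Your write-up is essentially a self-contained version of that cited argument, with the $W^{2,p}$--smallness replaced throughout by $H^3$--smallness via the same embedding; the two approaches coincide in substance.
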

The authors prove the previous result in the case \(n=3\) and \(H^3\)-closeness being replaced by \(W^{2,p}\)-closeness with any \(p>2\), 
but clearly the same arguments go through for any \(n\geq 2\), if we use the condition \(p > \max\{n-1,2\}\) instead of \(p>2\). 
Hence the result follows from Remark \ref{H3Remark}.

As already presented in Introduction, a volume preserving mean curvature flow \((E_t)_t\)  in \(\To^n\) is a smooth flow obeying the rule
\[
V_t = \bar H_t - H_t,
\]
where \(\bar H_t\) is the integral average of \(H_t\) over \(\partial E_t\). Then by \eqref{1stvarV} and \eqref{1stvarP} we see that
\[
\frac{\d}{\d t} |E_t| = 0 \ \ \ \text{and} \ \ \ \frac{\d}{\d t} |\partial E_t| = - \int_{E_t} (\bar H_t - H_t)^2 \ \d \H^{n-1} \leq 0,
\]
so the flow is volume preserving and  decreases boundary area. The existence and uniqueness of such a flow for a given smooth initial datum is well-known. This
is usually known as \emph{short time existence}. Also, the maximal lifetime of the flow is bounded by below when slightly varying the initial set in the \(C^{1,\alpha}\)-topology with \(\alpha>0\).

\begin{thm}[\textbf{Short Time Existence}]
\label{Ex}
\ \\
Let \(E \subset \mathbb T^n\) be a smooth set and \(0<\alpha<1\). There are positive
constants \(\delta\) and \(T\) depending on \(E\) and \(\alpha\) such that
if \(E_0\) is a smooth set in \(\mathbb T^n\)  of the form \(\partial E_0 = E_{\psi_0}\),
where \(\psi_0 \in C^\infty(\partial E)\) and \(\|\psi_0\|_{C^{1,\alpha}(\partial E)} \leq \delta\), then there exists a unique volume preserving mean curvature flow  in 
\(\mathbb T^n\) with the initial datum \(E_0\) and the maximal lifetime of flow is at least \(T\).
\end{thm}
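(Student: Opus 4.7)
The plan is to convert the VMCF into a nonlinear, nonlocal quasilinear parabolic PDE for a scalar height function over the fixed reference hypersurface \(\partial E\) and then invoke standard short-time existence theory for such equations on the smooth compact manifold \(\partial E\). Choose \(\delta > 0\) small enough that both Lemma \ref{Lc1equivalence} and Lemma \ref{LMC} apply and that \(\|\psi_0\|_{C^1(\partial E)} \le \delta\) is guaranteed by \(\|\psi_0\|_{C^{1,\alpha}(\partial E)} \le \delta\). Then \(E_0 = E_{\psi_0}\), and any smooth flow \((E_t)_t\) starting from \(E_0\) admits, on a maximal interval \([0, T^*)\) where \(\|\psi_t\|_{C^1(\partial E)}<\delta\), a graph parametrization \(E_t = E_{\psi_t}\) with smoothly parametrized \(\psi_t \in C^\infty(\partial E)\); the desired lifetime bound \(T\) will follow from PDE estimates that keep \(\|\psi_t\|_{C^{1,\alpha}}\) inside a prescribed ball.

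Under this representation, the pulled-back normal velocity reads \(V_t \circ \Phi_{\psi_t} = \lambda(\psi_t, \nabla_\tau \psi_t)\,\partial_t \psi_t\), where \(\lambda = \langle \nu_E, \nu_{E_\psi}\circ \Phi_\psi\rangle\) is smooth in its arguments and stays close to \(1\) whenever \(\|\psi\|_{C^1(\partial E)}\) is small. Combined with the mean curvature expansion from Lemma \ref{LMC}, the law \(V_t = \bar H_t - H_t\) becomes the quasilinear nonlocal equation
\begin{multline*}
\partial_t \psi = \frac{1}{\lambda}\Bigl(\Delta_\tau \psi - \langle A(\cdot, \psi, \nabla_\tau \psi), \nabla_\co^2 \psi\rangle - \nabla_\co \psi\bigl(Z(\cdot, \psi, \nabla_\tau \psi)\bigr) \\
- \psi\, P(\cdot, \psi, \nabla_\tau \psi) - H_E + \bar H(\psi)\Bigr)
\end{multline*}
on \(\partial E\), where \(\bar H(\psi)\) denotes the integral average of \(H_{E_\psi}\) on \(\partial E_\psi\). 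Since \(A(\cdot,0,0)=0\) and \(\lambda\) is bounded away from zero for small \(\|\psi\|_{C^1(\partial E)}\), this is a uniformly parabolic, second-order quasilinear equation on the smooth compact manifold \(\partial E\).

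I would then apply standard short-time existence for quasilinear parabolic equations in parabolic Hölder spaces: linearize at the initial value, solve the linear Cauchy problem via parabolic Schauder theory on \(\partial E\), and close a contraction-mapping argument in a small \(C^{1,\alpha}\)-ball around \(\psi_0\) on a short time interval \([0,T]\). The nonlocal term \(\bar H(\psi)\) is a bounded, lower-order perturbation that depends continuously on \(\psi\) in the \(C^{1,\alpha}\) topology (its differential is a rank-one operator of order at most two), so it is absorbed into the fixed-point iteration without trouble. Once \(\psi\) is obtained, parabolic bootstrap combined with the smoothness of \(\psi_0\) and of the coefficients upgrades \(\psi\) to a function smooth in space-time on \(\partial E \times [0,T]\), producing the required smooth VMCF \(E_t = E_{\psi_t}\). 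Uniqueness among smooth VMCFs with initial datum \(E_0\) follows from uniqueness at the PDE level together with the fact that, by continuity, any such flow admits the graph representation for a short time.

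The main obstacle is not existence itself but making \(T\) depend only on \(E\) and \(\alpha\), uniformly over all admissible \(\psi_0\). The key observation is that the coefficients \(A\), \(Z\), \(P\) and \(\lambda\) depend smoothly on their arguments and \(\psi \mapsto \bar H(\psi)\) is continuous from \(\{\|\psi\|_{C^{1,\alpha}(\partial E)}\le 2\delta\}\) into \(\mathbb R\); on this ball one obtains uniform parabolicity together with uniform bounds on the Hölder norms of the coefficients of the quasilinear operator. The contraction radius and the existence time produced by the fixed-point scheme then depend only on these uniform bounds, yielding a single \(T = T(E,\alpha)>0\) valid for every initial datum satisfying the hypothesis. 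The role of the exponent \(\alpha>0\) is precisely to enable Schauder-type estimates; in the borderline case of merely \(C^1\) data the present argument would break down and a different functional setting (e.g.\ little-Hölder or continuous maximal regularity) would be required.
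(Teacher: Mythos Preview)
The paper does not actually prove this theorem: immediately after stating it, the author writes that the result ``has been proved for bounded smooth sets in \(\mathbb R^n\), see \cite[Main Theorem]{ES} and clearly it is similar to prove it in the setting of the flat torus \(\To^n\).'' So there is nothing to compare against beyond a literature reference.

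Your sketch is the standard route and is essentially what underlies the cited result: write the evolving boundary as a normal graph over \(\partial E\), reduce the flow to a quasilinear parabolic equation for the height function with a nonlocal zero-order term coming from \(\bar H\), and run a fixed-point argument in parabolic H\"older spaces. The observation that the coefficients, and hence the parabolicity constants and the contraction radius, are uniform over the ball \(\{\|\psi\|_{C^{1,\alpha}(\partial E)}\le 2\delta\}\) is exactly what gives a lifetime \(T\) depending only on \(E\) and \(\alpha\), which is the nontrivial content of the statement. One small imprecision: you say \(\bar H(\psi)\) is continuous in the \(C^{1,\alpha}\) topology, but since it involves the mean curvature it is a second-order functional of \(\psi\); the correct statement is that it is Lipschitz on bounded sets of the parabolic H\"older class \(C^{2+\alpha,1+\alpha/2}\) in which the contraction is set up, which is what you actually need. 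With that adjustment your outline is sound and matches the approach the paper defers to.
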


The above result has been proved for bounded smooth sets in \(\mathbb R^n\), see \cite[Main Theorem]{ES} and clearly it is similar to prove it in the setting of the 
flat torus \(\To^n\). 
 From now on, when there is no danger of confusion, we denote a maximal lifetime of a given volume preserving mean curvature flow \((E_t)_t\) by \(T^*\).


\section{\(L^2\)-monotonicity}

In this section we prove a monotonicity result, which is the basis of our analysis. It states that if \((E_t)_t\) is a volume preserving mean curvature flow  with a smooth initial datum near enough to a strictly stable set \(F\) in \(H^3\)-sense, then  
it stays  near \(F\) in the \(H^3\)-sense for the whole lifespan of the flow, the initial velocity of flow decreases exponentially in \(L^2\)-sense and the quantity \(\|\nabla_{\tau}H_t\|_{L^2(\partial E_t)}^2+ C_0 \|\bar H_t - H_t\|_{L^2(\partial E_t)}^2\) is decreasing in time with sufficiently large \(C_0\). More precisely we have the following result.

\begin{thm}[\textbf{Monotonicity near strictly stable set}]
\label{M1}
\ \\
Let \(F \subset \mathbb T^n\) (\(n=3,4\)) be a strictly stable set. There are positive
constants \(C_0\) and \(\epsilon_0\) depending on \(F\) such that for every \(0<\epsilon \leq \epsilon_0\) there is a positive \(\gamma_\epsilon < \epsilon\)  such that if \((E_t)_t\) is a volume preserving mean curvature flow starting from a smooth set \(E_0 = F_{\psi_0}\),
where \(\psi_0 \in C^\infty(\partial F)\) and \(\|\psi_0\|_{H^3(\partial F)} \leq \gamma_\epsilon\), then for every \(t \in [0,T^*[\) (where \(T^*>0\) is the maximal lifetime of flow) we may write \( E_t = F_{\psi_t}\) with \(\psi_t \in C^\infty(\partial F)\), \(\|\psi_t\|_{H^3(\partial F)} \leq \epsilon\),
\begin{align}
\label{m1}
\|\bar H_t - H_t\|_{L^2(\partial E_t)}^2 &\leq \|\psi_0\|_{H^3(\partial F)} e^{-\sigma_1 t} \ \ \ \text{and} \\
\label{m2}
\frac{\d}{\d t}\left[ \|\nabla_{\tau}H_t\|_{L^2(\partial E_t)}^2+ C_0 \|\bar H_t - H_t\|_{L^2(\partial E_t)}^2 \right]
&\leq 0,
\end{align}
where \(\sigma_1\) is as in Lemma \ref{Lsvar}.
\end{thm}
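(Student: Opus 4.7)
The plan is a continuity/bootstrap argument. Given $\epsilon \leq \epsilon_0$ (to be chosen), pick $\gamma_\epsilon$ small enough that Theorem \ref{Ex} and Lemma \ref{Lc1equivalence} produce $E_t = F_{\psi_t}$ with $\|\psi_t\|_{H^3(\partial F)} \leq \epsilon$ on some initial interval. Let $\bar T$ denote the supremum of times $t \in [0, T^*)$ for which this $H^3$-bound is preserved on $[0,t]$; then $\bar T > 0$. The goal is to prove (m1) and (m2) on $[0, \bar T)$ and, as a final closing step, to improve the bound to $\|\psi_t\|_{H^3(\partial F)} \leq \epsilon/2$ strictly so that $\bar T = T^*$.

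The first calculation is the evolution of $\|V_t\|_{L^2(\partial E_t)}^2$ where $V_t = \bar H_t - H_t$. Using the standard commutation formulas for mean curvature along a normal flow together with the surface change-of-measure, one derives
\begin{equation*}
\tfrac{1}{2}\frac{d}{dt}\|V_t\|_{L^2(\partial E_t)}^2 = -\partial^2 P(E_t)[V_t] + R_1(t),
\end{equation*}
where $R_1(t)$ is cubic in $V_t$ with coefficients involving $B_t$. Decomposing $V_t = V_t^{T} + V_t^{\perp}$ in $L^2(\partial E_t)$ along $T(\partial E_t) \oplus T^{\perp, L^2}(\partial E_t)$, the translation component contributes zero to $\partial^2 P$, while Lemma \ref{Lsvar} yields $\partial^2 P(E_t)[V_t^\perp] \geq \sigma_1 \|V_t^\perp\|_{H^1(\partial E_t)}^2$, available since $\|\psi_t\|_{H^3(\partial F)} \leq \epsilon$. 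The translation vector $p_t$ associated with $V_t^T$ is shown to remain small (using that $\frac{d}{dt}|\partial E_t| = -\|V_t\|_{L^2}^2$ is integrable), which controls $\|V_t^T\|_{H^1}$ by $\|V_t\|_{L^2}$. Then Remark \ref{H3Remark} and Lemma \ref{LC1unifcontrol}, both valid because $n \leq 4$, allow the cubic remainder $R_1(t)$ to be absorbed into $\sigma_1 \|V_t\|_{H^1}^2$ provided $\epsilon_0$ is small enough. This produces
\begin{equation*}
\frac{d}{dt}\|V_t\|_{L^2(\partial E_t)}^2 \leq -\sigma_1 \|V_t\|_{L^2(\partial E_t)}^2,
\end{equation*}
whence (m1) by integration.

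The more delicate piece is (m2). Differentiating $\|\nabla_\tau H_t\|_{L^2(\partial E_t)}^2$ along the flow yields a leading dissipative term $-2\|\Delta_\tau H_t\|_{L^2}^2$, plus quadratic curvature terms of the form $\int |B_t|^2 |\nabla_\tau H_t|^2$ and cubic nonlinearities. Here Lemma \ref{Llaplace} converts $\|\nabla_\co^2 H_t\|_{L^2}^2$ into $\|\Delta_\co H_t\|_{L^2}^2$ up to lower-order terms, Lemma \ref{LH3unifcontrol} supplies uniform $L^p$ control of $\nabla_\tau H_t$ for $p \leq 6$, and Lemma \ref{Int2} with $p = 2$ closes the interpolation. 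After careful bookkeeping one arrives at
\begin{equation*}
\frac{d}{dt}\|\nabla_\tau H_t\|_{L^2(\partial E_t)}^2 \leq -\|\Delta_\tau H_t\|_{L^2(\partial E_t)}^2 + C\|V_t\|_{L^2(\partial E_t)}^2.
\end{equation*}
Choosing any $C_0 \geq C/\sigma_1$ and combining with the previous dissipation estimate absorbs the $C \|V_t\|_{L^2}^2$ term and yields (m2).

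It remains to close the bootstrap by improving $\|\psi_t\|_{H^3(\partial F)} \leq \epsilon$ to the strict bound $\epsilon/2$. The idea is to view $H_{F_{\psi_t}} \circ \Phi_{\psi_t} = \bar H_t - V_t \circ \Phi_{\psi_t}$ from Lemma \ref{LMC} as a quasilinear second order elliptic equation for $\psi_t$: the uniform $L^2$-bound on $\nabla_\tau H_t$ from (m2), the smallness of $V_t$ in $L^2$ from (m1), and a translation correction using Lemma \ref{Ltransl0} together produce a priori $H^3$-control that is strictly stronger than $\epsilon$ once $\gamma_\epsilon$ is shrunk appropriately. The main obstacle is the third paragraph: extracting the dissipation structure of (m2) from the third-order quantity $\|\nabla_\tau H_t\|_{L^2}^2$ in terms of the first-order quantity $\|V_t\|_{L^2}^2$ requires the precise low-dimensional Sobolev interpolations afforded by Lemmas \ref{LC1unifcontrol} and \ref{LH3unifcontrol}, which is where the restriction $n \leq 4$ genuinely enters.
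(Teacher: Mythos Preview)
Your bootstrap framework matches the paper's, but there are two points where your argument diverges and one of them is a genuine gap.

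First, the decomposition $V_t = V_t^T + V_t^\perp$ is unnecessary: $V_t = \bar H_t - H_t$ is \emph{already} in $T^{\perp,L^2}(\partial E_t)$, since for constant $p\in\mathbb R^n$ one has $\int_{\partial E_t} H_t\langle\nu_t,p\rangle = \int_{\partial E_t}\div_\tau p = 0$ and $\int_{\partial E_t}\bar H_t\langle\nu_t,p\rangle = \bar H_t\int_{E_t}\div p = 0$. So Lemma~\ref{Lsvar} applies directly to $V_t$, and your discussion of the ``translation vector $p_t$'' (and the appeal to integrability of $\|V_t\|_{L^2}^2$ to bound it) is wasted effort. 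Relatedly, the error term in the evolution of $\|\nabla_\tau H_t\|_{L^2}^2$ comes out as $C\|V_t\|_{H^1}^2$, not $C\|V_t\|_{L^2}^2$: the term $\int (\bar H_t-H_t)|B_t|^2\Delta_\tau H_t$ forces you through $\|V_t\|_{L^6}$, which is controlled only by $\|V_t\|_{H^1}$ via Lemma~\ref{LC1unifcontrol}. This is harmless once you note that the $V_t$-evolution actually gives $-\sigma_1\|V_t\|_{H^1}^2$ on the right, but you should state it that way.

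The real gap is your closing step. You propose to recover $\|\psi_t\|_{H^3}\leq\epsilon/2$ by reading Lemma~\ref{LMC} as an elliptic equation for $\psi_t$ and invoking Lemma~\ref{Ltransl0} for a ``translation correction''. But Lemma~\ref{Ltransl0} applies only to \emph{critical} sets, which $E_t$ is not during the flow, so that appeal is misplaced. More importantly, elliptic regularity from $\|\nabla_\tau H_t\|_{L^2}$ alone cannot recover the full $H^3$-norm of $\psi_t$: it misses the kernel of the linearized mean curvature operator, i.e.\ the low-frequency content of $\psi_t$. The paper handles this via Lemma~\ref{Lh3control}, which gives $\|\psi_t\|_{H^3}\leq K\big(\|\nabla_\tau H_t\|_{L^2(\partial E_t)}+\sqrt{D_F(E_t)}\big)$, and then controls the geometric quantity $D_F(E_t)=\int_{E_t\Delta F}\dist_{\partial F}$ through its explicit time derivative \eqref{Dtime} combined with the exponential decay \eqref{m1}. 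You need some replacement for this $L^2$-level control of $\psi_t$; without it the bootstrap does not close.
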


\begin{rem}
\label{M2}
\ \\
As a byproduct of the proof of Theorem \ref{M1} we may replace zero with \( -\frac{1}{2}\| \Delta_{\tau} H_t \|_{L^2(\partial E_t)}^2\) on the right-hand side of \ref{m2}. However, we do not need that fact.
\end{rem}

Before proving Theorem \ref{M1} we introduce the following useful geometric quantity employed, for instance, in \cite{AFJM}.
For any open set \(E \subset \To^n\) we define the map \(D_E\) from the collection of the open subsets of \(\To^n\) to \([0,\infty[\)
by setting for any open \(E' \subset \To^n\)
\begin{equation}
D_E(E') := \int_{E'\Delta E} \dist_{\partial E} \ \d \H^n =  \int_{E'} \bar d_E \ \d \H^n -  \int_E \bar d_E \ \d \H^n.
\end{equation}
In case of any smooth set \(E \subset \To^n\) this concept of “weak distance" turns out to be very useful in terms of controlling the \(L^2(\partial E)\)-norm of the corresponding function of given \(C^1\)-graph in normal direction over \(\partial E\), when the corresponding \(C^1(\partial F)\)-norm is sufficiently small.
To observe this, choose \(\delta = \delta(E)\) so small that by Lemma \ref{Lc1equivalence} for any \(\psi \in C^1(\partial E)\) with \(\|\psi\|_{C^1(\partial E)} \leq \delta\) the set 
\(E_\psi\) is defined as a \(C^1\)-set. Moreover, we may assume that for every \(s \in [-\delta,\delta]\) the map 
\(\Phi_s = \id + s \nabla \bar d_E\) is defined as a smooth diffeomorphism from some tubular neighborhood of \(\partial E\) to its image.
With help of the coarea formula one may compute
\[
D_E(E_\psi) = \int_{\left[0,\|\psi\|_{L^\infty(\partial E)}\right]} s \left[ \int_{\{ x \in \partial E: \psi(x) > s \}} J_\tau \Phi_s \ \d \mathcal H^{n-1}
+\int_{\{ x \in \partial E : \psi(x) < -s \}} J_\tau \Phi_{-s} \ \d \mathcal H^{n-1}\right]\ \d s.
\]

Since now \(J_\tau \Phi_s \rightarrow 1\) uniformly on \(\partial E\) as \(s \rightarrow 0\), then by decreasing \(\delta\), if necessary,  
it follows from the Cavalieri's principle that there exists \(C\geq 1\) depending on \(\delta\) such that for every \(\psi \in C^1(\partial E)\) with \(\|\psi\|_{C^1(\partial E)} \leq \delta\) 
\begin{equation}
\label{l2equivalence}
 C^{-1} \|\psi\|_{L^2(\partial E)}^2  \leq D_E(E_\psi) \leq C \|\psi\|_{L^2(\partial E)}^2. 
\end{equation}

Again, for any open subset \(E \subset \To^n\) the map \(D_E\) behaves well under any smooth flow.
If \((E_t)_t\) is a smooth flow in \(\To^n\) with a normal velocity \(V_t\), then \(D_E(E_t)\) is differentiable in time and
it is straightforward to calculate
\begin{equation}
\label{Dtime}
\frac{\d}{\d t} D_E(E_t) = \int_{\partial E_t}\bar d_E V_t \ \d \H^{n-1}.
\end{equation}

Now the importance of this quantity lies on the fact, that for any smooth critical set \(F\subset \To^n\) and \(\psi \in C^\infty(\partial F)\) with sufficiently small \(C^1\)-norm, \(\|\nabla_\tau H_{F_\psi}\|_{L^2(\partial F)}\) and \(D_F(F_\psi)\) together control the \(H^3\)-norm of \(\psi\).

\begin{lemma}
\label{Lh3control}
\ \\
Let \(F \subset \To^n\)  be a smooth critical set. There are positive constants \(K = K(F)\geq 1\) and
\(\delta = \delta(F)\) such that whenever \(\psi \in C^\infty(F)\) satisfies  \(\|\psi\|_{C^1(\partial F)} \leq \delta \), then
\[
K^{-1}\left( \|\nabla_{\tau}H_{F_\psi}\|_{L^2(\partial F_\psi)}+ \sqrt{D_F (F_\psi)} \right) \leq \|\psi\|_{H^3(\partial F)} \leq K\left( \|\nabla_{\tau}H_{F_\psi}\|_{L^2(\partial F_\psi)}+ \sqrt{D_F (F_\psi)} \right).
\] 
\end{lemma}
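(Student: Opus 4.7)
By the equivalence \eqref{l2equivalence}, for $\|\psi\|_{C^1(\partial F)}$ small enough one has $\sqrt{D_F(F_\psi)}\sim \|\psi\|_{L^2(\partial F)}$, so it is equivalent to prove
\[
K^{-1}\bigl(\|\nabla_\tau H_{F_\psi}\|_{L^2(\partial F_\psi)} + \|\psi\|_{L^2(\partial F)}\bigr) \leq \|\psi\|_{H^3(\partial F)} \leq K\bigl(\|\nabla_\tau H_{F_\psi}\|_{L^2(\partial F_\psi)} + \|\psi\|_{L^2(\partial F)}\bigr).
\]
The starting point is the representation \eqref{lMC1} from Lemma \ref{LMC}: setting
\[
R_0 := \langle A(\cdot,\psi,\nabla_\tau \psi),\nabla_\co^2\psi\rangle + \nabla_\co\psi\bigl(Z(\cdot,\psi,\nabla_\tau\psi)\bigr) + \psi\,P(\cdot,\psi,\nabla_\tau\psi),
\]
one has $H_{F_\psi}\circ \Phi_\psi = -\Delta_\tau \psi + R_0 + H_F$. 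Since $A(\cdot,0,0)=0$ and $A,Z,P$ are smooth on a compact domain, the hypothesis $\|\psi\|_{C^1(\partial F)} \leq \delta$ yields pointwise $|A|\leq C\delta$, $|Z|+|P|\leq C$, and a chain-rule expansion gives $|\nabla_\tau A|,|\nabla_\tau Z|,|\nabla_\tau P|\leq C(1+|\nabla_\co^2\psi|)$.

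\textbf{Upper bound.} Taking the tangential gradient kills the constant $H_F$, so $\nabla_\tau\Delta_\tau\psi = -\nabla_\tau(H_{F_\psi}\circ\Phi_\psi) + \nabla_\tau R_0$. Combining the pointwise bounds above with $|\psi|,|\nabla_\tau\psi|\leq \delta$, one obtains
\[
\|\nabla_\tau R_0\|_{L^2(\partial F)} \leq C\delta\,\|\nabla_\co^3\psi\|_{L^2(\partial F)} + C\|\nabla_\co^2\psi\|_{L^4(\partial F)}^2 + C\|\psi\|_{H^2(\partial F)}.
\]
The quadratic term is controlled via Lemma \ref{Int2} applied to $T=\nabla_\co\psi$ with $p=2$, $r=2$, $q=\infty$:
\[
\|\nabla_\co^2\psi\|_{L^4(\partial F)}^2 \leq C\,\|\nabla_\co^3\psi\|_{L^2(\partial F)}\,\|\nabla_\co\psi\|_{L^\infty(\partial F)} \leq C\delta\,\|\nabla_\co^3\psi\|_{L^2(\partial F)}.
\]
Chaining this with Lemma \ref{Llaplace} estimate \eqref{llaplace2} (which gives $\|\nabla_\co^3\psi\|_{L^2}^2 \leq \|\nabla_\tau\Delta_\tau\psi\|_{L^2}^2 + C\|\psi\|_{H^2}^2$) and \eqref{unifcontrol1b} (to pass between $\nabla_\tau(H_{F_\psi}\circ\Phi_\psi)$ on $\partial F$ and $\nabla_\tau H_{F_\psi}$ on $\partial F_\psi$) yields
\[
\|\nabla_\co^3\psi\|_{L^2(\partial F)} \leq C\|\nabla_\tau H_{F_\psi}\|_{L^2(\partial F_\psi)} + C\delta\,\|\nabla_\co^3\psi\|_{L^2(\partial F)} + C\|\psi\|_{H^2(\partial F)},
\]
and the middle term absorbs for $\delta$ small. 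A standard interpolation from Lemma \ref{Int1} then provides $\|\psi\|_{H^2(\partial F)} \leq \eta\|\nabla_\co^3\psi\|_{L^2(\partial F)} + C_\eta\|\psi\|_{L^2(\partial F)}$ for any $\eta>0$; choosing $\eta$ small and absorbing once more delivers the upper bound on $\|\psi\|_{H^3(\partial F)}$.

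\textbf{Lower bound and main obstacle.} The reverse inequality falls out of the same decomposition without any absorption: the chain
\[
\|\nabla_\tau(H_{F_\psi}\circ \Phi_\psi)\|_{L^2(\partial F)} \leq \|\nabla_\tau\Delta_\tau\psi\|_{L^2(\partial F)} + \|\nabla_\tau R_0\|_{L^2(\partial F)} \leq C\|\psi\|_{H^3(\partial F)}
\]
combined with \eqref{unifcontrol1b} gives $\|\nabla_\tau H_{F_\psi}\|_{L^2(\partial F_\psi)} \leq C\|\psi\|_{H^3(\partial F)}$, while $\sqrt{D_F(F_\psi)} \leq C\|\psi\|_{L^2(\partial F)} \leq C\|\psi\|_{H^3(\partial F)}$ by \eqref{l2equivalence}. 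The subtle step in the whole argument is the quadratic nonlinearity $|\nabla_\co^2\psi|^2$ arising from differentiating $\langle A,\nabla_\co^2\psi\rangle$: only by exploiting the $C^1$-smallness of $\psi$ through the endpoint choice $q=\infty$ in Lemma \ref{Int2} does this term come with a genuinely small coefficient $C\delta\|\nabla_\co^3\psi\|_{L^2}$; a naive Sobolev embedding in the low-dimensional range $n\leq 4$ would instead give only $\|\nabla_\co^2\psi\|_{L^4}^2 \lesssim \|\psi\|_{H^3}^2$, which is incompatible with a linear absorption.
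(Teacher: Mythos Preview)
Your argument is correct and follows essentially the same route as the paper: both use the expansion \eqref{lMC1}, exploit $A(\cdot,0,0)=0$ to absorb the leading third-order term, control the quadratic nonlinearity $\|\nabla_\co^2\psi\|_{L^4}^2$ via Lemma~\ref{Int2} with $q=\infty$ and the $C^1$-smallness of $\psi$, and close with \eqref{llaplace2} together with standard interpolation between $H^2$ and $L^2$. The only cosmetic difference is that the paper first derives a pointwise squared lower bound $|\nabla_\tau(H_{F_\psi}\circ\Phi_\psi)|^2 \geq \tfrac12|\nabla_\co(\Delta_\co\psi)|^2 - \tfrac14|\nabla_\co^3\psi|^2 - K'(\ldots)$ in local orthonormal frames and then integrates, whereas you work directly at the level of $L^2$-norms via the triangle inequality.
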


\begin{proof}
\ \\
We prove only the inequality
\[
\|\psi\|_{H^3(\partial F)} \leq K\left( \|\nabla_{\tau}H_{ F_\psi}\|_{L^2(\partial F_\psi)}+ \sqrt{D_F (F_\psi)} \right).
\]
The another one is easier to show. Again, it follows from \eqref{unifcontrol1b} and \eqref{l2equivalence}, that it suffices to find  positive \(\delta\) and \(K\) such that
for every \(\psi\in C^\infty(\partial F)\) with  \(\|\psi\|_{C^1(\partial E)} \leq \delta\)
\begin{equation}
\label{pLh3control1}
\|\psi\|_{H^3(\partial F)} \leq K\left( \|\nabla_{\tau}\left(H_{ F_\psi} \circ \Phi_\psi\right)\|_{L^2(\partial F)}+ \|\psi\|_{L^2(\partial F)} \right),
\end{equation}
where \(\Phi_\psi :\partial F \rightarrow \partial F_\psi\) is defined as in Lemma \ref{Lc1equivalence}. 
To this end we will prove the following auxiliary result. For suitable positive constants \(\delta'\) and \(K'\) 
the estimate
\begin{equation}
\label{pLh3control2}
\left|\nabla_\tau \left(H_{ F_\psi} \circ \Phi_\psi\right)\right|^2 \geq \frac{|\nabla_\co (\Delta_\co \psi)|^2}{2} -\frac{|\nabla_\co^3 \psi|^2}{4} 
 -K'\left( |\psi|^2+|\nabla_\co \psi|^2+|\nabla_\co^2 \psi|^2 + |\nabla_\co^2\psi|^4\right)
\end{equation}
holds on \(\partial F\) for every  \(\psi\in C^\infty(\partial F)\) with \(\|\psi\|_{C^1(\partial E)} \leq \delta'\).  Due to the compactness of \(\partial F\) we have only to show this holds locally.
Let \(\delta''\) be the constant for \(F\) as in Lemma \ref{LMC}.
Then for every \(\psi\in C^\infty(\partial F)\) with \(\|\psi\|_{C^1(\partial F)} \leq \delta''\)
\begin{align}
\notag
H_{F_\psi} \circ \Phi_\psi
&= - \Delta_\tau \psi + \langle A (\ \cdot \ ,\psi,\nabla_\tau \psi ),\nabla^2_\co \psi\rangle \\
\label{pLh3control3}
& +\nabla_\co \psi \left(Z(\ \cdot \ ,\psi,\nabla_\tau \psi)\right) +\psi P (\ \cdot \ ,\psi,\nabla_\tau \psi ) + H_F,
\end{align}
where
\begin{align*}
A&:\partial F \  \times [-\delta'',\delta''] \times [-\delta'',\delta'']^n  \rightarrow T^2_0(\partial F),\\
Z&:\partial F \ \times [-\delta'',\delta'']  \times [-\delta'',\delta'']^n  \rightarrow  T(\partial F) \ \ \ \text{and}\\
P&:\partial F \ \times [-\delta'',\delta'']  \times [-\delta'',\delta'']^n  \rightarrow \mathbb R
\end{align*}
are smooth maps depending on \(F\) and \(A(\ \cdot \ ,0,0):\partial F \rightarrow T^2_0(\partial F)\) is the zero tensor field. For a fixed point \(x_0 \in \partial F\)
choose a local orthonormal vector frame \((E_1,\ldots,E_{n-1})\) in some open neighborhood \(U_{x_0} \subset \partial F\) of \(x_0\).
Then we may write for every \(\psi\in C^\infty(\partial F)\) with \(\|\psi\|_{C^1(\partial E)} \leq \delta''\) 
\begin{align}
\label{pLh3control4}
\langle A (\ \cdot \ ,\psi,\nabla_\tau \psi ),\nabla^2_\co \psi\rangle &=  \sum_{ij} a_{ij} (\ \cdot \ ,\psi,\nabla_\tau \psi)\nabla^2_\co \psi(E_i,E_j) \ \ \ \text{and} \\
\label{pLh3control5}
\nabla_\co \psi \left(Z(\ \cdot \ ,\psi,\nabla_\tau \psi)\right) &=  \sum_{i} z_i (\ \cdot \ ,\psi,\nabla_\tau \psi)\nabla_\co \psi(E_i),
\end{align}
where \(a_{ij},z_i : U_{x_0} \ \times [-\delta'',\delta''] \times [-\delta'',\delta'']^n  \rightarrow \mathbb R\) are the smooth functions given
by 
\begin{align*}
a_{ij}(x,t,z) &=  A (x,t,z) (E_i(x),E_j(x)) \ \ \ \text{and} \\
z_i (x,t,z)  &= \langle E_i(x),Z(x,t,z)\rangle
\end{align*}
for every  \((x,t,z) \in  U_{x_0}  \times [-\delta'',\delta''] \times [-\delta'',\delta'']^n\).
Notice that then \(|(a_{ij}(x,t,z)| \leq |A(x,t,z)|\) and \(|z_i (x,t,z)| \leq |Z(x,t,z)|\).
By taking tangential gradient over \eqref{pLh3control3} (notice that for every \(\varphi \in C^\infty(\partial F)\) we may write \(\nabla_\tau\varphi = \sum_i (\nabla_{E_i} \varphi)  E_i\) in \(U_{p_0}\))
and using the expressions \eqref{pLh3control4} and \eqref{pLh3control5} we 
obtain
\begin{align}
\notag
\nabla_\tau (H_{F_\psi} \circ \Phi_\psi)
\notag
&= - \nabla_\tau (\Delta_\co \psi) +  \sum_{ijk} a_{ij} (\ \cdot \ ,\psi,\nabla_\tau \psi)\nabla^3_\co \psi(E_i,E_j,E_k) E_k \\
\notag
&+\sum_{ijk} a_{ij} (\ \cdot \ ,\psi,\nabla_\tau \psi)\left(\nabla^2_\co \psi(\nabla_{E_k}E_i,E_j) +  \nabla^2_\co \psi(E_i,\nabla_{E_k}E_j)\right)E_k \\
\notag
&+\sum_{ij} \nabla^2_\co \psi(E_i,E_j)\nabla_\tau\left(a_{ij} (\ \cdot \ ,\psi,\nabla_\tau \psi)\right) \\
\notag
&+\sum_{ik} z_i (\ \cdot \ ,\psi,\nabla_\tau \psi)\left(\nabla^2_\co \psi(E_i,E_k)+\nabla_\co \psi(\nabla_{E_k}E_i)\right)E_k  \\
\notag
&+\sum_{i} \nabla_\co \psi(E_i)\nabla_\tau \left(z_i (\ \cdot \ ,\psi,\nabla_\tau \psi)\right)  \\
\label{pLh3control6}
&+ P (\ \cdot \ ,\psi,\nabla_\tau \psi ) \nabla_\tau \psi + \psi\nabla_\tau\left(P (\ \cdot \ ,\psi,\nabla_\tau \psi ) \right).
\end{align}
Recall that \(\Delta_\co \psi = \Delta_\tau \psi\) and \(H_{F}\) is a constant thus vanishing after taking gradient over \eqref{pLh3control3}.
Again, for any smooth map \(u : U_{x_0}  \times [-\delta'',\delta''] \times [-\delta'',\delta'']^n\ \rightarrow \mathbb R\) one computes
\begin{align}
\notag
\nabla_\tau \left(u (\ \cdot \ ,\psi,\nabla_\tau \psi) \right) &= \nabla_\tau u(\ \cdot \ ,\psi,\nabla_\tau \psi) + \partial_t u (\ \cdot \ ,\psi,\nabla_\tau \psi) \nabla_\tau \psi \\
\label{pLh3control7}
& + \D^2_\tau \psi \nabla_n u(\ \cdot \ ,\psi,\nabla_\tau \psi) - \langle \nu_F, \nabla_n u(\ \cdot \ ,\psi,\nabla_\tau \psi) \rangle B_{F} \nabla_\tau \psi.
\end{align}
It follows from \(A(\ \cdot \ ,0,0)\) being the zero tensor field on \(\partial F\) and uniform continuity on compact sets, that there exist \(0<\delta'<\delta''\) and \(C\geq 1\) such that
for every \((x,t,z) \in \partial F  \times [-\delta',\delta'] \times [-\delta',\delta']^n\)
\begin{align}
\label{pLh3control8}
|A(x,t,z)| &\leq \frac{1}{64(n-1)^3} \ \ \ \text{and} \\
\label{pLh3control8b}
|Z(x,t,z)|,|P(x,t,z)| &\leq C.
\end{align}
By shrinking \(U_{x_0}\) and increasing \(C\), if necessary, we may assume that each \(|\nabla_{E_k} E_i|\) is bounded by \(C\) in \(U_{x_0}\).
Since \(|\nabla_\tau \psi| = |\nabla_\co\psi|\) and \(|\D_\tau^2 \psi| = |\nabla_\co^2\psi|\) on \(\partial F\) for every \(\psi \in C^\infty(\partial F)\), then again by shrinking \(U_{x_0}\) and increasing \(C\), if needed,
it follows from \eqref{pLh3control7} that for every \(\psi \in C^\infty(\partial F)\) with \(\|\psi\|_{C^1(\partial F)}\leq \delta'\)
\begin{equation}
\label{pLh3control9}
|\nabla_\tau \left(a_{ij} (\ \cdot \ ,\psi,\nabla_\tau \psi) \right)|,|\nabla_\tau \left(z_i (\ \cdot \ ,\psi,\nabla_\tau \psi) \right)|,|\nabla_\tau \left(P (\ \cdot \ ,\psi,\nabla_\tau \psi) \right)| \leq C(1+|\nabla_\co^2\psi|)
\end{equation}
in \(U_{x_0}\). Recalling expression \eqref{pLh3control6} for such \(\psi\) we denote 
\begin{align*}
a_\psi = \sum_{ijk} a_{ij} (\ \cdot \ ,\psi,\nabla_\tau \psi)\nabla^3_\co \psi(E_i,E_j,E_k) E_k
\end{align*}
and the sum of the lower order terms by \(b_\psi\). Thus we may write shortly in \(U_{x_0}\)
\[
\nabla_\tau (H_{F_\psi} \circ \Phi_\psi) = - \nabla_\tau(\Delta_\co \psi) + a_\psi + b_\psi.
\]
Now by using \eqref{pLh3control8} we have
\begin{equation}
\label{pLh3control10}
|a_\psi| \leq\frac{ |\nabla^3_\co \psi|}{64} 
\end{equation}
in \(U_p\). Again, by applying the estimates \eqref{pLh3control8b} and \eqref{pLh3control9} on \(b_\psi\) we find a positive constant \(C'\) depending on \(C\), \(\delta'\) and \(n\) such that in \(U_{x_0}\)
\begin{equation}
\label{pLh3control11}
|b_\psi| \leq C' \left(|\psi|+|\nabla_\co \psi|+|\nabla_\co^2 \psi| + |\nabla_\co^2\psi|^2\right).
\end{equation}
Thus by using \eqref{pLh3control10} and \eqref{pLh3control11} as well as the Cauchy-Schwarz and Young's inequalities, we have in \(U_{x_0}\)
\begin{align*}
|\nabla_\tau (H_{F_\psi} \circ \Phi_\psi)|^2 
&= |-\nabla_\tau(\Delta_\co \psi) + a_\psi + b_\psi|^2 \\
&\geq |\nabla_\tau(\Delta_\co \psi)|^2 - 2| \nabla_\tau(\Delta_\co \psi)| |  a_\psi + b_\psi| \\
&\geq \frac{1}{2}|\nabla_\tau(\Delta_\co \psi)|^2  -8| a_\psi + b_\psi|^2\\
&\geq \frac{1}{2}|\nabla_\tau(\Delta_\co \psi)|^2  -16| a_\psi|^2 -16| b_\psi|^2 \\
&\geq \frac{1}{2}|\nabla_\tau(\Delta_\co \psi)|^2- \frac{1}{4} |\nabla^3_\co \psi|^2 -64(C')^2\left(|\psi|^2+|\nabla_\co \psi|^2+|\nabla_\co^2 \psi|^2 + |\nabla_\co^2\psi|^4\right)\\
&= \frac{1}{2}|\nabla_\co(\Delta_\co \psi)|^2- \frac{1}{4} |\nabla^3_\co \psi|^2 -64(C')^2\left( |\psi|^2+|\nabla_\co \psi|^2+|\nabla_\co^2 \psi|^2+ |\nabla_\co^2\psi|^4\right).
\end{align*}
This implies \eqref{pLh3control2}. By integrating the both sides of \eqref{pLh3control2} over \(\partial F\) and applying \eqref{llaplace2} with the associated constant \(C_{\partial F}\)
we obtain
\begin{align}
\notag
\|\nabla_\tau (H_{F_\psi} \circ \Phi_\psi)\|_{L^2(\partial F)}^2 
&\geq \frac{1}{2}\|\nabla_\co(\Delta_\co \psi)\|_{L^2(\partial F)}^2- \frac{1}{4} \|\nabla^3_\co \psi\|_{L^2(\partial F)}^2 -K'\left( \|\psi\|_{H^2(\partial F)}^2 + \|\nabla_\co^2\psi\|_{L^4(\partial F)}^4\right) \\
\notag
&\geq \frac{1}{4}\|\nabla_\co^3\psi\|_{L^2(\partial F)}^2- (C_{\partial F}+ K')\left( \|\psi\|_{H^2(\partial F)}^2 + \|\nabla_\co^2\psi\|_{L^4(\partial F)}^4\right) \\
\notag
&\geq \frac{1}{16}\|\psi\|_{H^3(\partial F)}^2 +  \frac{3}{16}\|\nabla_\co^3\psi\|_{L^2(\partial F)}^2\\
\label{pLh3control12}
&- \left(C_{\partial F}+ K'+1\right)\left( \|\psi\|_{H^2(\partial F)}^2 + \|\nabla_\co^2\psi\|_{L^4(\partial F)}^4\right).
\end{align}
Next we interpolate the last terms in \eqref{pLh3control12}. By using Lemma \ref{Int1} and Lemma \ref{Int2} (recall that \(\nabla_\co(\nabla_\co \psi) = \nabla_\co^2 \psi\), \(\nabla_\co^2(\nabla_\co \psi)=\nabla^3_\co\psi\) and \(\|\nabla_\co\psi\|_{L^\infty(\partial F} \leq \|\psi\|_{C^1(\partial F)}\))  we find a positive \(M\) depending on \(F\) such that  
\begin{align}
\label{pLh3control13}
\|\nabla_\co \psi\|_{L^2(\partial F)}^2&\leq M \|\nabla_\co^3 \psi\|_{L^2(\partial F)}^\frac{2}{3}\|\psi\|_{L^2(\partial F)}^\frac{4}{3},\\
\label{pLh3control14}
\|\nabla_\co^2 \psi\|_{L^2(\partial F)}^2&\leq M \|\nabla_\co^3 \psi\|_{L^2(\partial F)}^\frac{4}{3}\|\psi\|_{L^2(\partial F)}^\frac{2}{3} \ \ \ \text{and} \\
\label{pLh3control15}
\|\nabla_\co^2 \psi\|_{L^4(\partial F)}^4 &\leq M \|\nabla_\co^3 \psi\|_{L^2(\partial F)}^2 \|\psi\|_{C^1(\partial F)}^2.
\end{align}
By applying Young's inequality on \eqref{pLh3control13} and \eqref{pLh3control14} we find \(K''= K''(K',M,C_{\partial F})\) such that
\begin{equation}
\label{pLh3control16}
\left(C_{\partial F}+ K'+1\right) \|\psi\|_{H^2(\partial F)}^2 \leq \frac{1}{8} \|\nabla_\co^3\psi\|_{L^2(\partial F)}^2 + K'' \|\psi\|_{L^2(\partial F)}^2.
\end{equation}
Finally by choosing \(0<\delta \leq \delta'\) with \(\left(C_{\partial F}+ K'+1\right)M \delta^2 \leq \frac{1}{16}\) it follows from \eqref{pLh3control12}, \eqref{pLh3control15} and \eqref{pLh3control16}, 
that for every \(\psi \in C^\infty(\partial F)\) with \(\|\psi\|_{C^1(\partial F)} \leq \delta\)
\[
\|\nabla_\tau (H_{F_\psi} \circ \Phi_\psi)\|_{L^2(\partial F)}^2 \geq \frac{1}{16}\|\psi\|_{H^3(\partial F)}^2 + K''\|\psi\|_{L^2(\partial F)}^2,
\]
which implies \eqref{pLh3control1}.
\end{proof}

The previous lemma and Lemma \ref{LMC} together yield, that for every smooth critical set \(F \subset \To^n\) there are \(\delta=\delta(F)\) and 
\(K=K(F)\) such that for every \(\psi \in C^\infty(\partial F)\) with \(\|\psi\|_{C^1(\partial F)} \leq \delta\)
\begin{align}
\label{h1MC}
\|\bar H_{ F_\psi} - H_{F_\psi}\|_{H^1(\partial F_\psi)}&\leq K \|\psi\|_{H^3(\partial F)} \\
\label{h1MCb}
\| H_{ F_\psi}\|_{H^1(\partial F_\psi)}&\leq K\left(\|\psi\|_{H^3(\partial F)} + |H_F| \right).
\end{align}

We will also use the following identities for the time derivatives of the \(L^2\)-norms of \(\bar H_t - H_t\) and \(\nabla_\tau H_t\).
For the proof see Appendix.
\begin{lemma}
\label{Ltimederivatives}
\ \\
Let \((E_t)_t\) be a volume preserving mean curvature flow in \(\To^n\). Then for every \(0\leq t < T^*\)
\begin{align}
\label{ltimederivatives1}
\frac{\d}{\d t}  \|\bar H_t - H_t\|_{L^2(\partial E_t)}^2 
 = &-2 \partial^2 P(E_t)[\bar H_t - H_t]+ \int_{\partial E_t} H_t(\bar H_t - H_t)^3 \ \d \mathcal H^{n-1}  \ \ \text{and}\\
\notag
\frac{\d}{\d t}\|\nabla_{\tau}H_t\|_{L^2(\partial E_t)}^2
=&-2\int_{\partial E_t} (\Delta_\tau H_t)^2 -(\bar H_t - H_t)|B_t|^2 \Delta_\tau H_t \ \d \mathcal H^{n-1} \\
\notag
&-2\int_{\partial E_t} (\bar H_t - H_t) \langle\nabla_\tau H_t, B_t \nabla_\tau H_t\rangle \ \d \mathcal H^{n-1} \\
\label{ltimederivatives2}
&+\int_{\partial E_t} |\nabla_\tau H_t|^2 (\bar H_t - H_t) H_t \  \d \mathcal H^{n-1}.
\end{align}
\end{lemma}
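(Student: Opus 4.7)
The plan is to use the standard pointwise evolution equations for geometric quantities under a smooth normal flow, then differentiate under the integral sign and integrate by parts. Observe first that the left-hand sides of \eqref{ltimederivatives1}--\eqref{ltimederivatives2} are intrinsic to the evolving boundary (they involve only geometric quantities on $\partial E_t$) and hence insensitive to tangential reparametrization of the flow; we may therefore assume without loss of generality that the parametrization is purely normal with velocity $V_t = \bar H_t - H_t$. Under such a normal flow in $\To^n$ the following classical identities hold pointwise on $\partial E_t$:
\begin{align*}
\partial_t H_t &= -\Delta_\tau V_t - |B_t|^2 V_t = \Delta_\tau H_t - |B_t|^2(\bar H_t - H_t),\\
\partial_t g^{ij} &= -2V_t\, h_t^{ij},\qquad \partial_t(\d\H^{n-1}) = V_t H_t\,\d\H^{n-1},
\end{align*}
where in the first line we used that $\bar H_t$ is spatially constant, so $\Delta_\tau V_t = -\Delta_\tau H_t$.

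For \eqref{ltimederivatives1} I would differentiate $\|\bar H_t - H_t\|_{L^2(\partial E_t)}^2$ by the product rule. The term involving $\partial_t \bar H_t$ vanishes, since $\int_{\partial E_t}(\bar H_t - H_t)\,\d\H^{n-1}=0$ by definition of $\bar H_t$. Substituting the evolution equation for $H_t$ produces
\[
-2\int_{\partial E_t}(\bar H_t - H_t)\bigl[\Delta_\tau H_t - |B_t|^2(\bar H_t - H_t)\bigr]\,\d\H^{n-1},
\]
while the area-element term yields $\int_{\partial E_t} H_t(\bar H_t - H_t)^3\,\d\H^{n-1}$. Integrating by parts in the Laplacian piece (using that $\partial E_t$ is compact without boundary and $\nabla_\tau \bar H_t = 0$) gives $-2\int_{\partial E_t}|\nabla_\tau H_t|^2\,\d\H^{n-1} = -2\int_{\partial E_t}|\nabla_\tau(\bar H_t - H_t)|^2\,\d\H^{n-1}$, which combined with the curvature piece assembles to $-2\partial^2 P(E_t)[\bar H_t - H_t]$, proving \eqref{ltimederivatives1}.

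For \eqref{ltimederivatives2} the computation is more delicate because the induced metric itself evolves. Writing $|\nabla_\tau H_t|^2 = g^{ij}\partial_i H_t\,\partial_j H_t$ in local coordinates and differentiating the three time-dependent factors produces three contributions. The $\partial_t g^{ij}$ term yields $-2\int_{\partial E_t}(\bar H_t - H_t)\langle\nabla_\tau H_t, B_t\nabla_\tau H_t\rangle\,\d\H^{n-1}$. The $\partial_t H_t$ term gives $2\int_{\partial E_t}\langle\nabla_\tau(\partial_t H_t),\nabla_\tau H_t\rangle\,\d\H^{n-1}$, which after integration by parts equals $-2\int_{\partial E_t}(\partial_t H_t)\Delta_\tau H_t\,\d\H^{n-1}$; inserting $\partial_t H_t = \Delta_\tau H_t - |B_t|^2(\bar H_t - H_t)$ produces the pair $-2\int_{\partial E_t}\bigl[(\Delta_\tau H_t)^2 - (\bar H_t - H_t)|B_t|^2\Delta_\tau H_t\bigr]\,\d\H^{n-1}$. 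Finally the $\partial_t(\d\H^{n-1})$ contribution gives $\int_{\partial E_t}|\nabla_\tau H_t|^2(\bar H_t - H_t)H_t\,\d\H^{n-1}$. Summing the three parts produces exactly \eqref{ltimederivatives2}.

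The only real obstacle is justifying the pointwise evolution equations in a flow that is not \emph{a priori} purely normal, but the geometric-invariance remark at the outset handles this: since tangential reparametrizations leave both the integrands and the induced measure untouched, one may pass to the normal reparametrization when computing these integral identities. Once this reduction is accepted, the rest reduces to a careful application of the product rule, the above evolution equations and the integration-by-parts formula on compact hypersurfaces recorded in Preliminaries.
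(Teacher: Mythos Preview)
Your proposal is correct and follows essentially the same strategy as the paper: reduce to a normal parametrization, use the evolution equations $\partial_t H_t = \Delta_\tau H_t - |B_t|^2(\bar H_t - H_t)$ and $\partial_t(\d\H^{n-1}) = (\bar H_t - H_t)H_t\,\d\H^{n-1}$, differentiate under the integral, and integrate by parts. The only cosmetic difference is in how the time derivative of $|\nabla_\tau H_t|^2$ is handled for \eqref{ltimederivatives2}: the paper works extrinsically, computing $\partial_t(\nabla_\tau H_t\circ\Phi_t)$ via the ambient extension $H(\,\cdot\,,t)$ and the evolution of $\nu_t$, whereas you work intrinsically via $\partial_t g^{ij}=-2V_t h_t^{ij}$; the resulting $-2(\bar H_t-H_t)\langle\nabla_\tau H_t,B_t\nabla_\tau H_t\rangle$ term is identical in both approaches.
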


We are now ready to prove Theorem \ref{M1}. We divide it into four steps. 
\begin{proof}[Proof of Theorem \ref{M1}]
\ \\
\textbf{Step 1.}
We want first to utilize several lemmas and estimates we have gathered by controlling \(C^1\)- and \(W^{2,p}\)-norms for \(1\leq p \leq 6\).
Indeed, by Remark \ref{H3Remark}  there is a constant \(K_0 \in \mathbb R_+\) depending on \(F\), such that for every \(\psi \in C^\infty(\partial F)\) and \(1\leq p \leq 6\)
\begin{equation}
\label{M1p1}
\|\psi\|_{C^1(\partial F)}, \|\psi\|_{W^{2,p}(\partial F)} \leq K_0 \|\psi\|_{H^3(\partial F)}.
\end{equation}
Notice that the assumption \(n\leq 4\) is really needed for this conclusion. 
Using the estimate \eqref{M1p1} we find  \(0< \delta < 1\) and \(1<K<\infty\)  so 
that for any \( \psi \in C^\infty(\partial F)\) with \( \|\psi\|_{H^3(\partial F)} \leq \delta\) the norm \(\|\psi\|_{C^1(\partial F)}\) is so small that
the following four conditions hold.
\begin{itemize}
\item[(i)]
 Lemma \ref{Lc1equivalence} is satisfied, i.e., \(F_\psi\) is a well-defined smooth set. Moreover, we may assume
\begin{equation}
\label{M1p2}
\|F_\psi,F\|_{C^1} \leq \frac{\delta_F}{2},
\end{equation}
where \(\delta_F\) is as in the lemma.
\item[(ii)]
The inequalities \eqref{unifcontrol1a} (for \(\bar d_F\), then \(\bar d_F \circ \Phi_\psi = \psi\)) and \eqref{l2equivalence}  are satisfied with \(K\), i.e.,
\begin{align}
\label{M1p3}
K^{-1}\|\psi\|_{L^2(\partial F)} \leq \|\bar d_F\|_{L^2(\partial F_\psi)}, \sqrt{D_F(F_\psi)} \leq K \|\psi\|_{L^2(\partial F)}.
\end{align}
\item[(iii)]
Lemma \ref{Lh3control} and \eqref{h1MC} are satisfied with \(K\), i.e.,
\begin{align}
\label{M1p4}
\|\psi\|_{H^3(\partial F)} &\leq K \left(\|\nabla_\tau H_{F_\psi} \|_{L^2(\partial F_\psi)}+ \sqrt{D_F(F_\psi)}\right) \\
\label{M1p5}
\|\bar H_{F_\psi} - H_{F_\psi} \|_{H^1(\partial F_\psi)} &\leq K \|\psi\|_{H^3(\partial F)}.
\end{align}

\item[(iv)]
Lemma \ref{LC1unifcontrol} is satisfied for \(\bar H_{F_\psi} - H_{F_\psi}\) and \(1\leq p \leq 6\) with \(K\), i.e.,
\begin{equation}
\label{M1p6}
\|\bar H_{F_\psi} - H_{F_\psi} \|_{L^p(\partial F_\psi)} \leq K \|\bar H_{F_\psi} - H_{F_\psi} \|_{H^1(\partial F_\psi)}.
\end{equation}
\end{itemize}

Moreover, we may assume \(\delta\) to be so small and \(K\) to be so large that by \eqref{M1p1}, \eqref{lMC2}, Lemma \ref{LH3unifcontrol} and Lemma \ref{Lsvar} (note that \( \bar H_{F_\psi} - H_{F_\psi} \in  T^{\perp,L^2}(\partial F_\psi)\))

\begin{align}
\label{M1p7}
\|\nabla_\tau H_{F_\psi}\|_{L^4(\partial F_\psi)} & \leq K \left(\| \Delta_\tau H_{F_\psi} \|_{L^2(\partial F_\psi)} +  \|\bar H_{F_\psi} - H_{F_\psi} \|_{H^1(\partial F_\psi)}\right), \\
\label{M1p8}
\|H_{F_\psi}\|_{L^p(\partial F_\psi)},\|B_{F_\psi}\|_{L^p(\partial F_\psi)} &\leq K \ \ \ \text{for} \ \ \ 1 \leq p \leq 6 \ \ \ \text{and}  \\
\label{M1p9}
\sigma_1 \|\bar H_{F_\psi} - H_{F_\psi}\|^2_{H^1(\partial F_\psi)} &\leq \partial^2 P(F_\psi)[\bar H_{F_\psi} - H_{F_\psi} ].
\end{align} 

Next we fix the constants \(C_0\) and \(\epsilon_0\) by setting
\begin{align}
\label{M1p10}
C_0 &= \frac{4K^6 + 1}{\sigma_1} \\
\label{M1p11}
\epsilon_0 &= \min\left\{\delta,\frac{\min\{\sigma_1,1\}}{16 K^6}\right\}.
\end{align}
Again, for given \(0<\epsilon \leq \epsilon_0\) set \(\gamma_\epsilon\) to be the maximal \(0< s \leq  \frac{\epsilon}{2}\) satisfying
\begin{equation}
\label{M1p12}
\frac{2}{\sigma_1} \sqrt s  + K^2 s^2 \leq \frac{\epsilon^2}{16K^2} \ \ \ \text{and} \ \ \  K^2\left(1 + C_0\right) s \leq \frac{\epsilon}{4}.
\end{equation}
\textbf{Step 2.}
Suppose that \((E_t)_t\) is the volume preserving mean curvature flow with a smooth initial datum \(E_0=F_{\psi_0}\), where \(\psi_0 \in C^\infty(\partial F)\) and \(\|\psi_0\|_{H^3(\partial F)} \leq \gamma_\epsilon\).
Since \(t \mapsto \|E_t,F\|_{C^1}\) is continuous on \([0,T^*[\) (recall \(T^*\) is the maximal lifetime), then it follows from Lemma 
\ref{Lc1equivalence} and \eqref{M1p2} that we may write \(E_t = F_{\psi_t}\) for unique \(\psi_t \in C^\infty(\partial F)\) with 
$t \mapsto \psi_t$ smoothly parametrized over a short time period. Hence
\[
T_\epsilon = \sup\left\{s \in [0,T^*[ \ : E_t = F_{\psi_t} \  \text{for} \ \psi_t \in C^\infty(\partial F), \ \|\psi_t\|_{H^3(\partial F)} \leq \epsilon \ \forall t \in  [0,s] \right\}
\]
must be a positive number. The key idea is to show that the claim of theorem is satisfied for \(\epsilon\) on the time interval \([0,T_\epsilon[\) and by virtue of the choice of \(\gamma_\epsilon\) we have in fact 
\begin{equation}
\label{M1p13}
\|\psi_t\|_{H^3(\partial F)} \leq \frac{\epsilon}{2} \ \ \ \text{on} \ \ \ [0,T_\epsilon[.
\end{equation}
By using a similar continuity argument as before one shows that the condition \eqref{M1p13} implies \(T_\epsilon = T^*\).
\newline
\newline
\textbf{Step 3.}
For every \(t \in \ [0,T_\epsilon[\)
\begin{align}
\notag
&\frac{\d}{\d t}  \|\bar H_t - H_t\|_{L^2(\partial E_t)}^2 \\
\notag
&\overset{\eqref{ltimederivatives1}}{=} -2 \partial^2 P(E_t)[\bar H_t - H_t]+ \int_{\partial E_t} H_t(\bar H_t - H_t)^3 \ \d \mathcal H^{n-1} \\
\notag
&\overset{\eqref{M1p9}}{\leq} - 2\sigma_1 \|\bar H_t - H_t\|_{H^1(\partial E_t)}^2 + \int_{\partial E_t} H_t(\bar H_t - H_t)^3 \ \d \mathcal H^{n-1}\\
\notag
&\leq - 2\sigma_1 \|\bar H_t - H_t\|_{H^1(\partial E_t)}^2 + \|H_t\|_{L^4(\partial E_t)}\|\bar H_t - H_t\|_{L^4(\partial E_t)}^3\\
\notag
&\overset{\eqref{M1p8}}{\leq} - 2\sigma_1 \|\bar H_t - H_t\|_{H^1(\partial E_t)}^2 + K \|\bar H_t - H_t\|_{L^4(\partial E_t)}^3\\
\notag
&\overset{\eqref{M1p6}}{\leq}- 2\sigma_1 \|\bar H_t - H_t\|_{H^1(\partial E_t)}^2 + K^4 \|\bar H_t- H_t\|_{H^1(\partial E_t)}^3 \\
\notag
&\overset{\eqref{M1p5}}{\leq} \left(- 2\sigma_1 + K^5 \|\psi_t\|_{H^3(\partial F)}\right) \|\bar H_t - H_t\|_{H^1(\partial E_t)}^2  \\
\notag
&\leq \left(- 2\sigma_1 + K^5 \epsilon \right) \|\bar H_t - H_t\|_{H^1(\partial E_t)}^2  \\
\label{M1p14}
&\overset{\eqref{M1p11}}{\leq}- \sigma_1 \|\bar H_t - H_t\|_{H^1(\partial E_t)}^2.
\end{align}
Since \(- \sigma_1 \|\bar H_t - H_t\|_{H^1(\partial E_t)}^2 \leq - \sigma_1 \|\bar H_t - H_t\|_{L^2(\partial E_t)}^2\), by using
Grönwall's lemma we obtain
\begin{align}
\notag
\|\bar H_t - H_t\|^2_{L^2(\partial E_t)} 
&\leq \|\bar H_0 - H_0\|^2_{L^2(\partial E_t)} e^{-\sigma_1 t} \\
\notag
&\overset{\eqref{M1p5}}{\leq} K^2 \|\psi_0\|^2_{H^3(\partial F)} e^{-\sigma_1 t} \\
\label{M1p15}
&\overset{\eqref{M1p11}}{\leq} \|\psi_0\|_{H^3(\partial F)} e^{- \sigma_1 t}
\end{align} 
so \eqref{m1} holds on \([0,T_\epsilon[\). Next we estimate \( D_F(E_t)\) on \([0,T_\epsilon[\). For the time derivative
\begin{align*}
\frac{\d}{\d t} D_F(E_t) &\overset{\eqref{Dtime}}{=}\int_{\partial E_t} \bar d_F (\bar H_t - H_t) \d \mathcal H^{n-1} \\
&\leq  \|\bar d_F\|_{L^2(\partial E_t)} \|\bar H_t - H_t\|_{L^2(\partial E_t)} \\
&\overset{\eqref{M1p3}}{\leq}  K \|\psi_t\|_{L^2(\partial F)} \|\bar H_t - H_t\|_{L^2(\partial E_t)}\\
&\leq  K\epsilon \|\bar H_t - H_t\|_{L^2(\partial E_t)}\\
&\overset{\eqref{M1p15}}{\leq} K\epsilon \|\psi_0\|_{H^3(\partial F)}^\frac{1}{2} e^{-\frac{1}{2} \sigma_1 t} \\
&\overset{\eqref{M1p11}}{\leq}  \|\psi_0\|_{H^3(\partial F)}^\frac{1}{2}  e^{-\frac{1}{2} \sigma_1 t}.
\end{align*}
Thus integrating over time yields 
\begin{align}
\notag
 D_F(E_t) 
&\leq \frac{2}{\sigma_1} \|\psi_0\|_{H^3(\partial F)}^\frac{1}{2}  \left(1- e^{-\frac{\sigma_1}{2}}\right) +  D_F(E_0) \\
\notag
&\overset{\eqref{M1p3}}{\leq} \frac{2}{\sigma_1}\|\psi_0\|_{H^3(\partial F)}^\frac{1}{2} + K^2 \|\psi_0\|_{L^2(\partial F)}^2  \\
\label{M1p16}
&\overset{\eqref{M1p12}}{\leq} \frac{\epsilon^2}{16K^2}.
\end{align}
\textbf{Step 4.} In this last step we finish the proof by showing that  \eqref{m2}  and \eqref{M1p13} are satisfied on \([0,T_\epsilon[\). 
To this end we have to estimate  \(\frac{\d}{\d t}\|\nabla_{\tau}H_t\|_{L^2(\partial E_t)}^2\) on \([0,T_\epsilon[\).
Recall that by \eqref{ltimederivatives2} we have
\begin{align*}
\frac{\d}{\d t}\|\nabla_{\tau}H_t\|_{L^2(\partial E_t)}^2
&=-2\int_{\partial E_t} (\Delta_\tau H_t)^2 -(\bar H_t - H_t)|B_t|^2 \Delta_\tau H_t \ \d \mathcal H^{n-1} \\
&-2\int_{\partial E_t} (\bar H_t - H_t) \langle\nabla_\tau H_t, B_t \nabla_\tau H_t\rangle \ \d \mathcal H^{n-1} 
+\int_{\partial E_t} |\nabla_\tau H_t|^2 (\bar H_t - H_t) H_t \  \d \mathcal H^{n-1} \\
&=T_1+T_2+T_3.
\end{align*}
Next we estimate the terms \(T_1\), \(T_2\) and \(T_3\).
First we have 
\begin{align}
\notag
T_1&\overset{\text{Young}}{\leq}-\|\Delta_\tau H_t\|_{L^2(\partial E_t)}^2  +4 \int_{\partial E_t} |\bar H_t - H_t|^2|B_t|^4 \ \d \H^{n-1}\\
\notag
&\leq -\|\Delta_\tau H_t\|_{L^2(\partial E_t)}^2  +4 \|\bar H_t - H_t\|_{L^6(\partial E_t)}^2 \|B_t\|_{L^6(\partial E_t)}^4\\
\notag
&\overset{\eqref{M1p8}}{\leq}-\|\Delta_\tau H_t\|_{L^2(\partial E_t)}^2  +4 K^4 \|\bar H_t - H_t\|_{L^6(\partial E_t)}^2 \\
\label{M1p17}
& \overset{\eqref{M1p6}}{\leq} -\|\Delta_\tau H_t\|_{L^2(\partial E_t)}^2  +4 K^6\|\bar H_t - H_t\|_{H^1(\partial E_t)}^2.
\end{align}
Second
\begin{align}
\notag
\notag
T_2&\leq 2 \int_{\partial E_t} |\bar H_t - H_t| |\nabla_\tau H_t|^2 | B_t | \ \d \mathcal H^{n-1}\\
\notag
&\leq 2\|\nabla_\tau H_t\|_{L^4(\partial E_t)}^2  \| \bar H_t- H_t\|_{L^4(\partial E_t)}\|B_t\|_{L^4(\partial E_t)} \\
\notag
&\overset{\eqref{M1p8}}{\leq} 2 K \|\nabla_\tau H_t\|_{L^4(\partial E_t)}^2  \| \bar H_t- H_t\|_{L^4(\partial E_t)}\\
\notag
&\overset{\eqref{M1p7}}{\leq} 2 K^3 \left(\| \Delta_\tau H_t \|_{L^2(\partial E_t)} +  \|\bar H_t - H_t \|_{H^1(\partial E_t)}\right)^2 \| \bar H_t- H_t\|_{L^4(\partial E_t)}\\
\notag
&\overset{\eqref{M1p6}}{\leq} 2 K^4 \| \bar H_t- H_t\|_{H^1(\partial E_t)} \left(\| \Delta_\tau H_t \|_{L^2(\partial E_t)} +  \|\bar H_t - H_t \|_{H^1(\partial E_t)}\right)^2 \\
\notag
&\overset{\eqref{M1p5}}{\leq} 2 K^5 \|\psi_t\|_{H^3(\partial F)} \left(\| \Delta_\tau H_t \|_{L^2(\partial E_t)} + \|\bar H_t - H_t \|_{H^1(\partial E_t)}\right)^2 \\
\notag
&\leq 4 K^5 \|\psi_t\|_{H^3(\partial F)}\| \Delta_\tau H_t \|_{L^2(\partial E_t)}^2  +  4 K^5 \|\psi_t\|_{H^3(\partial F)} \|\bar H_t - H_t \|_{H^1(\partial E_t)}^2 \\
\notag
&\leq 4 K^5 \epsilon \| \Delta_\tau H_t \|_{L^2(\partial E_t)}^2  +  4 K^5 \epsilon \|\bar H_t - H_t \|_{H^1(\partial E_t)}^2 \\
\label{M1p18}
&\overset{\eqref{M1p11}}{\leq} \frac{1}{4}\|\Delta_\tau H_t \|_{L^2(\partial E_t)}^2  +  \frac{1}{2} \|\bar H_t - H_t \|_{H^1(\partial E_t)}^2 .
\end{align}
Finally by estimating in a similar way as above we obtain
\begin{equation}
\label{M1p19}
T_3\leq \frac{1}{8}\|\Delta_\tau H_t \|_{L^2(\partial E_t)}^2  +  \frac{1}{4} \|\bar H_t - H_t \|_{H^1(\partial E_t)}^2.
\end{equation}
Hence \eqref{M1p17}, \eqref{M1p18} and \eqref{M1p19} together yield
\begin{align*}
\frac{\d}{\d t}\|\nabla_{\tau}H_t\|_{L^2(\partial E_t)}^2
&\leq -\frac{1}{2}\| \Delta_\tau H_t \|_{L^2(\partial E_t)}^2  + \left(4K^6 + 1\right) \|\bar H_t - H_t\|_{H^1(\partial E_t)}^2 \\
&\overset{\eqref{M1p10}}{=} -\frac{1}{2}\| \Delta_\tau H_t \|_{L^2(\partial E_t)}^2  +  \sigma_1 C_0 \|\bar H_t - H_t\|_{H^1(\partial E_t)}^2 
\end{align*}
on \([0,T_\epsilon[\). Then the previous estimate with \eqref{M1p14} yields that for every \(t \in [0,T_\epsilon[\)
\begin{equation*}
\label{M1p20}
\frac{\d}{\d t}\left[ \|\nabla_{\tau}H_t\|_{L^2(\partial E_t)}^2+ C_0 \|\bar H_t - H_t\|_{L^2(\partial E_t)}^2 \right]
\leq -\frac{1}{2}\| \Delta_\tau H_t \|_{L^2(\partial E_t)}^2,
\end{equation*}
which implies \eqref{m2}. In particular
\[
 t \mapsto \|\nabla_{\tau}H_t\|_{L^2(\partial E_t)}^2+ C_0 \|\bar H_t - H_t\|_{L^2(\partial E_t)}^2 
\]
\ \\
is decreasing map on \([0,T_\epsilon[\) and therefore
\begin{equation}
\label{M1p20}
\|\nabla_\tau H_t\|_{L^2(\partial E_t)} \leq \|\nabla_\tau H_0\|_{L^2(\partial E_0)} + C_0 \|\bar H_0-H_0\|_{L^2(\partial E_0)}.
\end{equation}
Finally for every \(t \in  [0,T_\epsilon[\) 
\begin{align*}
\|\psi_t\|_{H^3(\partial E_t)}  
&\overset{\eqref{M1p4}}{\leq}
 K\left(\|\nabla_\tau H_t\|_{L^2(\partial E_t)} + \sqrt{D_F(E_t)}\right) \\
&\overset{\eqref{M1p20}}{\leq}
K\left(\|\nabla_\tau H_0\|_{L^2(\partial E_0)} + C_0 \|\bar H_0-H_0\|_{L^2(\partial E_0)}\right) + K \sqrt{D_F(E_t)} \\
&\overset{\eqref{M1p16}}{\leq}
 K\left(1 + C_0\right) \|\bar H_0-H_0\|_{H^1(\partial E_0)} + \frac{\epsilon}{4} \\
&\overset{\eqref{M1p5}}{\leq}
 K^2\left(1 + C_0\right) \|\psi_0\|_{H^3(\partial E_t)} +\frac{\epsilon}{4} \\
&\overset{\eqref{M1p12}}{\leq}
 \frac{\epsilon}{2}.
\end{align*}
\end{proof}


\section{The main result}

In this section we will prove the main result. We give first the technical statement of the theorem in contrast to the heuristical one we presented in Introduction.

\begin{thm}[\textbf{Main Theorem}]
\label{MT}
\ \\
Let \(\mathbb T^n\) be a flat torus with \(n=3,4\) and assume that \(F \subset \mathbb T^n\) is a strictly stable set. There exist positive
constants \(\delta_0, \sigma_0 \in \mathbb R_+\) depending on \(F\) such that the following hold.

If \(E_0\) is a smooth set in \(\mathbb T^n\) with \(|E_0|=|F|\) of the form \(E_0 = F_{\psi_0}\),
where \(\psi_0 \in C^\infty(\partial F)\) and \(\|\psi_0\|_{H^3(\partial F)} \leq \delta_0 \), then the volume preserving mean curvature flow 
\((E_t)_t\) in \(\mathbb T^n\) with the initial datum \(E_0\) satisfies the following conditions.
\begin{itemize}
\item[(i)]  The flow has infinite lifetime.
\item[(ii)] There exist \(p = p(F,E_0) \in \mathbb R^n\) and \(C = C(F) \in \mathbb R_+\)  such that the flow converges to \(F+p\) exponentially fast in \(W^{2,5}\)-sense, that is,
\(E_t = (F+p)_{\varphi_t}\) for \(\varphi_t \in C^\infty(\partial(F+p))\) and \(\|\varphi_t\|_{W^{2,5}} \leq C e^{-\sigma_0 t}\).
\item[(iii)] \(|p| \rightarrow 0\) and \(C \rightarrow 0\) as \( \|\psi_0\|_{H^3(\partial F)} \rightarrow 0\).
\end{itemize}
\end{thm}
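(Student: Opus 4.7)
The plan is to combine the monotonicity of Theorem~\ref{M1} with compactness, the rigidity of critical sets near $F$ (Lemma~\ref{Ltransl0}), and Sobolev interpolation. Fix $\epsilon \in (0,\epsilon_0)$ (to be shrunk at the end) and set $\delta_0 = \gamma_\epsilon$. Theorem~\ref{M1} then yields $E_t = F_{\psi_t}$ with $\|\psi_t\|_{H^3(\partial F)} \le \epsilon$ and $\|\bar H_t - H_t\|_{L^2(\partial E_t)}^2 \le \|\psi_0\|_{H^3(\partial F)}\, e^{-\sigma_1 t}$ on the maximal interval $[0, T^*)$. For the infinite-lifetime claim~(i), the compact embedding $H^3(\partial F) \hookrightarrow C^{1,\alpha}(\partial F)$ of Lemma~\ref{Emb} (available since $n \le 4$) gives $\sup_{t \in [0,T^*)} \|E_t, F\|_{C^{1,\alpha}} \le M\epsilon$; if $T^* < \infty$, restarting Theorem~\ref{Ex} at a time close to $T^*$ would extend the flow past $T^*$ and contradict maximality, so $T^* = \infty$.

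For the asymptotic shape, pick any sequence $t_k \to \infty$. The same compact embedding supplies a subsequence (still denoted $t_k$) with $\psi_{t_k} \to \psi_\infty$ in $C^{1,\alpha}(\partial F)$, and weak lower semicontinuity of the $H^3$-norm forces $\|\psi_\infty\|_{H^3(\partial F)} \le \epsilon$. Since $\bar H_{t_k} - H_{t_k} \to 0$ in $L^2$ while the averages $|\bar H_{t_k}|$ stay bounded, passage to the limit in the weak form of the first variation of perimeter shows that $F_{\psi_\infty}$ has distributional constant mean curvature. Lemma~\ref{Lwcrit} then promotes $F_{\psi_\infty}$ to a smooth critical set, and Lemma~\ref{Ltransl0} (applicable for $\epsilon$ small) forces $F_{\psi_\infty} = F + p$ for some $p \in \mathbb R^n$ with $|p| \lesssim \epsilon$.

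Having identified the translate, I would reparametrize: write $E_t = (F+p)_{\varphi_t}$ for $t \ge t_{k_0}$, where $k_0$ is chosen large enough that $\|\varphi_{t_{k_0}}\|_{H^3(\partial(F+p))} \le \gamma_{\epsilon'}$ for some small $\epsilon' > 0$. Since $F+p$ is also strictly stable, Theorem~\ref{M1} applied with reference set $F+p$ gives $\|\varphi_t\|_{H^3(\partial(F+p))} \le \epsilon'$ and $\|\bar H_t - H_t\|_{L^2(\partial E_t)} \le C e^{-\sigma_1 t/2}$. The kinematic identity $\partial_t \varphi_t \, \langle \nu_{F+p}, \nu_t \circ \Phi_{\varphi_t} \rangle = (\bar H_t - H_t) \circ \Phi_{\varphi_t}$, whose inner product is bounded below by $1/2$ for $\epsilon'$ small, combined with \eqref{unifcontrol1a} gives $\|\partial_t \varphi_t\|_{L^2(\partial(F+p))} \le C e^{-\sigma_1 t/2}$. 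Integrating shows $\varphi_t$ is Cauchy in $L^2$ and its limit agrees with the subsequential limit $0$, so $\|\varphi_t\|_{L^2(\partial(F+p))} \le C e^{-\sigma_1 t/2}$. Finally, Lemma~\ref{Int1} (available because $n-1 \in \{2,3\}$) yields $\theta \in (0,1)$ with $\|\varphi_t\|_{W^{2,5}(\partial(F+p))} \le C \|\varphi_t\|_{H^3(\partial(F+p))}^{\theta} \|\varphi_t\|_{L^2(\partial(F+p))}^{1-\theta} + \text{l.o.t.}$, producing $\|\varphi_t\|_{W^{2,5}(\partial(F+p))} \le C e^{-\sigma_0 t}$ with $\sigma_0 = (1-\theta)\sigma_1/2$. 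The smallness of $|p|$ and $C$ as $\|\psi_0\|_{H^3} \to 0$ follows because both are controlled by $\epsilon$, which is shrunk via the choice $\delta_0 = \gamma_\epsilon$.

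The principal obstacle is upgrading the subsequential convergence of Step~2 to genuine full-time convergence to a \emph{single} translate $F+p$: this is handled by integrating $\|V_t\|_{L^2}$ on $[t_{k_0},\infty)$, which is finite precisely because of the exponential decay provided by Theorem~\ref{M1}, paired with the rigidity of Lemma~\ref{Ltransl0} that pins down a unique limiting translate. The restriction $n \le 4$ enters through Remark~\ref{H3Remark} and the final interpolation step, in the same way as in the proof of Theorem~\ref{M1}.
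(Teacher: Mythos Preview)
Your overall strategy matches the paper's proof closely: short-time existence plus Theorem~\ref{M1} for infinite lifetime, subsequential $C^{1,\alpha}$-compactness and Lemma~\ref{Lwcrit}/\ref{Ltransl0} to identify the limit as a translate, exponential $L^2$-decay over $\partial(F+p)$, and interpolation against a uniform $H^3$-bound to reach $W^{2,5}$. The paper obtains the $L^2$-decay via the functional $D_{F+p}(E_t)$ and \eqref{Dtime} rather than the kinematic identity for $\partial_t\varphi_t$, but these are equivalent devices.

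There is, however, one genuine gap. You write ``choose $k_0$ large enough that $\|\varphi_{t_{k_0}}\|_{H^3(\partial(F+p))}\le\gamma_{\epsilon'}$'' and then re-apply Theorem~\ref{M1} with reference set $F+p$. But at that stage you only know $\psi_{t_k}\to\psi_\infty$ in $C^{1,\alpha}$ and $\|\psi_{t_k}\|_{H^3}\le\epsilon$; neither yields $\|\varphi_{t_k}\|_{H^3}\to 0$. Via Lemma~\ref{Lh3control} one has $\|\varphi_{t_k}\|_{H^3}\lesssim \|\nabla_\tau H_{t_k}\|_{L^2}+\sqrt{D_{F+p}(E_{t_k})}$, and while the second term vanishes along the subsequence, the first is a priori only \emph{bounded} by \eqref{m2}, not small. (It can be shown to tend to zero, but that requires the sharper inequality of Remark~\ref{M2} together with $\int|\nabla_\tau H_t|^2=-\int(H_t-\bar H_t)\Delta_\tau H_t$, none of which you invoke.) The paper sidesteps this entirely: it never re-applies Theorem~\ref{M1}. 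It gets the uniform $H^3$-bound on $\varphi_t$ directly from Lemma~\ref{Lh3control} and the monotone quantity \eqref{m2} already established over $\partial F$, and it gets the exponential $L^2$-decay from $D_{F+p}(E_t)$ using only \eqref{m1}. Your detour through a second application of Theorem~\ref{M1} is both unjustified as written and unnecessary.

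A smaller point: for (iii) you say $|p|$ and $C$ are ``controlled by $\epsilon$, which is shrunk via $\delta_0=\gamma_\epsilon$''. But $\delta_0$ is fixed once and for all; you must track the dependence on $\|\psi_0\|_{H^3}$ itself, not on $\epsilon$. The paper does this explicitly, obtaining e.g.\ $\|\varphi_t\|_{H^3}\le C_F'\|\psi_0\|_{H^3}^{1/2}$ and $\|\psi_\infty\|_{L^\infty}\le\rho(\|\psi_0\|_{H^3})$.
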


\begin{rem}
\label{MT2}
\ \\
In the statement of the main theorem the \(W^{2,5}\)-convergence can be replaced by \(W^{2,q}\)-convergence, where
\(1\leq q<\infty\), if \(n=3\), and \(1\leq q<6\), if \(n=4\). In this case the proof would be similar to the original proof.
\end{rem}

The main idea of the proof is obviously to employ the short time existence (Theorem \ref{Ex}) and the monotonicity result (Theorem \ref{M1}). 

\begin{proof}[Proof of the Main Theorem]
\ \\
 Let  \(F \subset \To^n\) be a strictly stable set and let us fix the constants \(\delta_0\) and 
\(\sigma_0\) for \(F\) as in the statement of the main theorem. Let \(\epsilon_0,\sigma_1 \in \mathbb R_+\) and 
\(0<\gamma_\epsilon < \epsilon\) (for every \(0<\epsilon\leq \epsilon_0\)) for \(F\)  be as in Theorem \ref{M1}. 
Choose first a positive \(c\) so small that the following hold. 
\begin{itemize}
\item[(i)]
The condition \(\|\psi\|_{C^1(\partial F)} \leq c\) for \(\psi \in C^{1,\frac{1}{4}}(\partial F)\)  implies via Lemma \ref{Lc1equivalence} that the set \(E_\psi\) is defined as a \(C^{1,\frac{1}{4}}\)-set, the map \(\Phi_\psi\) defined as in the same lemma is a \(C^{1,\frac{1}{4}}\)-diffeomorphism
from \(\partial F\) to \(\partial F_\psi\) and
\begin{equation}
\label{pMT1}
 K^{-1} |\partial F| \leq |\partial F_\psi| \leq K |\partial F|
\end{equation}
for some real number \(K \geq 1\) depending on \(c\) and \(F\).
\item[(ii)]
 Further, if \(\psi\) is smooth, the same condition implies via \eqref{h1MCb} that by increasing 
\(K\), if necessary, 
\begin{equation}
\label{pMT2}
\| H_{F_\psi}\|_{H^1( F_\psi)} \leq K\left(\|\psi\|_{H^3(\partial F)}+|H_F| \right).
\end{equation}
\item[(iii)]
Since a translate of \(F\) satisfies Lemma \ref{Lh3control} and \eqref{l2equivalence} with the same bounds as \(F\), then by using the previous lemmas and
Lemma \ref{Lc1equivalence}, possibly decreasing \(c\) and increasing \(K\), we obtain the following.
Assume \(F+p = F_g\) with \(g \in C^\infty(\partial F)\) and \(\|g\|_{C^1(\partial F)} \leq c\). Then for every \(\psi \in C^\infty(\partial F)\) 
with \(\|\psi\|_{C^1(\partial F)} \leq c\)
there is a unique \(\varphi \in C^\infty(\partial (F + p))\) such that \(F_\psi = (F + p)_\varphi\) and we have the following uniform estimates
\begin{align}
\label{pMT3}
K^{-1} \sqrt{D_{F + p}(F_\psi)} \leq \|\varphi\|_{L^2(\partial (F + p))} &\leq K \sqrt{D_{F + p}(F_\psi)} \ \ \ \text{and} \\
\label{pMT4}
K^{-1} \|\nabla_\tau H_{ F_\psi}\|_{L^2(\partial F_\psi)} \leq \|\varphi\|_{H^3(\partial(F + p))} &\leq K \left(\|\nabla_\tau H_{F_\psi}\|_{L^2(\partial F_\psi)}+ \sqrt{D_{F + p}(F_\psi)}\right).
\end{align}

\item[(iv)]
Finally the condition \(\|\psi\|_{C^{1,\frac{1}{4}}(\partial F)} \leq c\) for \(\psi \in C^\infty(\partial F)\) means that Theorem \ref{Ex} (in the case
\(\beta=\frac{1}{4}\)) is satisfied for the initial set \(E_\psi\). 
\end{itemize}
Next choose \(0<\epsilon_1 \leq \epsilon_0\) such that the condition \(\|\psi\|_{H^3(\partial F)} \leq \epsilon_1\) for \(\psi \in C^\infty(\partial F)\) means that
 first Lemma \ref{Ltransl0} is satisfied, provided that \(F_\psi\) is critical with \(|F_\psi|=|F|\), and second via Lemma 
\ref{Emb} \(\|\psi\|_{C^1(\partial F)} \leq \|\psi\|_{C^{1,\frac{1}{4}}(\partial F)} \leq c\) (here we really need the assumption \(n \leq 4\)).
At this point we set 
\begin{equation}
\label{pMT5}
\delta_0 = \gamma_{\epsilon_1} \ \ \ \text{and} \ \ \ \sigma_0 = -\frac{\sigma_1}{2}\left(\frac{1}{3} - \frac{n-1}{10}\right).
\end{equation}
Fix an arbitrary \(\psi_0 \in C^\infty(\partial F)\) with \(\|\psi_0\|_{H^3(\partial F)} \leq \delta_0\) and \(|F_{\psi_0}|=|F|\). Then by Theorem \ref{Ex} there exists a unique volume preserving mean curvature flow
\((E_t)_t\) starting from \(E_0 = F_{\psi_0}\) and the maximal lifetime \(T^*\) is bounded from below by \(T = T(F,\frac{1}{4})>0\) as in Theorem \ref{Ex}. Again, by Theorem \ref{M1}  we may write \(E_t = F_{\psi_t}\), where
\(\psi_t \in C^\infty(\partial F)\) with \(\|\psi_t\|_{H^3(\partial F)} \leq \epsilon_1\), and the inequalities \eqref{m1}
and \eqref{m2} are satisfied for every \(t \in [0,T^*[\). We divide the proof into three steps, whose statements are listed below.

\begin{itemize}

\item[\textbf{Step 1.}]
The flow \((E_t)_t\) has infinite lifetime and there exists \(\psi_\infty \in H^3(\partial F)\) with \(\|\psi_\infty\|_{H^3(\partial F)} \leq \epsilon\)
such that \(\psi_t \rightarrow \psi_\infty\) in \(C^{1,\frac{1}{4}}(\partial F)\). Further, there exist a positive constant \(C_F\) independent of the choice of \(\psi_0\) 
and an increasing \(\rho:[0,\delta_0] \rightarrow [0,\infty[\) with \(\lim_{s \rightarrow 0 +} \rho(s) = 0\) such that
 every \(t \in [0,\infty[\)
\begin{align}
\label{pMT6}
D_{E_\infty} (E_t) &\leq C_F \|\psi_0\|_{H^3(\partial F)}  e^{-\sigma_1 t} \ \ \ \text{and} \\
\label{pMT7}
\|\psi_\infty\|_{L^\infty(\partial F)} &\leq \rho\left(\|\psi_0\|_{H^3(\partial F)}\right),
\end{align}
where \(E_\infty = F_{\psi_\infty}\) is the corresponding \(C^{1,\frac{1}{4}}\)-limit set.

\item[\textbf{Step 2.}]
The limit set is of the form \(E_\infty = F + p\), where \(p \rightarrow 0\) as \(\|\psi_0\|_{H^3(\partial F)} \rightarrow 0\).

\item[\textbf{Step 3.}]
The \(W^{2,5}\)-convergence of the flow: For each \(t \in [0,\infty[\) there is \(\varphi_t \in C^\infty(\partial E_\infty)\) with \(E_t = (E_\infty)_{\varphi_t}\) and 
\(\|\varphi_t\|_{W^{2,5}(\partial F)} \leq C e^{-\sigma_0 t}\), where \(C\) is independent of the choice of \(\psi_0\) and \(C \rightarrow 0\) as \(\|\psi_0\|_{H^3(\partial F)} \rightarrow 0\).

\end{itemize}

Since \(\psi_0 \in C^\infty(\partial F)\) with \(\|\psi_0\|_{H^3(\partial F)} \leq \delta_0\) was arbitrarily chosen,  the claim of theorem follows immediately from these statements. 
Let us prove them in order as listed.
\newline
\newline
\emph{Proof of Step 1.} Assume by contradiction \(T^*<\infty\) and choose \(\hat t \in \ [0,T^*[\) such that \(T^*-\hat t < T\), where 
\(T=T(F,\frac{1}{4})\) as in Theorem \ref{Ex}. Now \(\|\psi_{\hat t}\|_{H^3(\partial F)} \leq \epsilon_1 \) so \(\|\psi_{\hat t}\|_{C^{1,\frac{1}{4}}(\partial F)} \leq c\) and hence by Theorem \ref{Ex} there exists a unique volume preserving mean curvature flow \((\hat E_t)_t\) starting from \(E_{\hat t}\) with a maximal lifetime at least \(T\). It follows from the uniqueness and from the semi-group property of \((E_t)_t\) that \(E_t=\hat E_{t-\hat t}\) for every \(t \in [\hat t,T^*[\) . This means that the flow \((E_t)_t\) can be extended beyond \(T^*\), which contradicts  its maximality. Therefore it holds \(T^*=\infty\).

Take a sequence \((t_k)_{k=1}^\infty \subset \mathbb R_+\) with \(t_k \rightarrow \infty\). Since \(\|\psi_{t_k}\|_{H^3(\partial F)} \leq \epsilon_1\) for every \(k\) and \( H^3(\partial F)\) is weakly compact, 
then, up to a subsequence, there is a weak limit \(\psi_\infty \in H^3(\partial F)\) with \(\|\psi_{\infty}\|_{H^3(\partial F)} \leq \epsilon_1\). 
Further, it follows from Lemma \ref{Emb} that the sequence converges to \(\psi_\infty\) in \(C^{1,\frac{1}{4}}(\partial F)\). Now \(C^1\)-convergence implies that \(\|\psi_{\infty}\|_{C^1(\partial F)} \leq c\) so \(E_\infty := F_{\psi_\infty}\) is defined as a \(C^{1,\frac{1}{4}}\)-set and the map \(\Phi_{\psi_\infty}\) is a \(C^{1,\frac{1}{4}}\)-diffeomorphism
from \(\partial F\) to \(\partial E_\infty\).

Next we check that \eqref{pMT6} holds for every \(t\). Notice first that  \(|E_{t_k} \Delta E_\infty| \rightarrow 0\), which implies
\(D_{E_\infty} (E_{t_k}) \rightarrow 0\). For a fixed \(t \in [0,\infty[\) and every \(t_k > t\) we may estimate
\begin{align*}
|D_{E_\infty} (E_{t_k})-D_{E_\infty} (E_t)| 
&= \left| \int_t^{t_k} \frac{\d}{\d s} D_{E_\infty} (E_s) \ \d s\right| \\
&\overset{\eqref{Dtime}}{=} \left| \int_t^{t_k}\int_{\partial E_s} \bar d_{E_\infty} (\bar H_s -H_s) \ \d \H^{n-1} \d s\right| \\
&\leq \int_t^{t_k}\|\bar d_{E_\infty}\|_{L^2(\partial E_s)} \|\bar H_s -H_s\|_{L^2(\partial E_s)} \ \d s \\
&\leq \sqrt n \int_t^{t_k}|\partial E_s| \|\bar H_s -H_s\|_{L^2(\partial E_s)} \ \d s \\
&\overset{\eqref{pMT1}}{\leq} \sqrt n  K |\partial F| \int_t^{t_k} \|\bar H_s -H_s\|_{L^2(\partial E_s)} \ \d s \\
&\overset{\eqref{m1}}{\leq} \sqrt n K |\partial F| \|\psi_0\|_{H^3(\partial F)} \int_t^{t_k}  e^{-\sigma_1 s} \ \d s \\
&\leq  \frac{\sqrt n  K |\partial F|}{\sigma_1}\|\psi_0\|_{H^3(\partial F)} e^{-\sigma_1 t}.
\end{align*}
 Since \( D_{E_\infty} (E_{t_k}) \rightarrow 0\), then the previous estimate implies \eqref{pMT6} for \(t\). By doing a similar estimate
for \(D_F (E_{t_k})-D_F (E_0)\), using \eqref{pMT3} for \(F\) and Lemma \eqref{pMT7} and recalling that 
\(\|\psi_{t_k}\|_{H^3(\partial F)} \leq \epsilon_1\) we find a constant \(\tilde C_F\) not depending on the choice of \(\psi_0\) 
such that
\[
\|\psi_{t_k}\|_{L^\infty(\partial F)} \leq \tilde C_F \|\psi_0\|_{H^3(\partial F)}^{\frac{1}{2}(1-\frac{n-1}{6})}.
\]
Thus by passing to limit we see that \eqref{pMT7} holds for \(\psi_\infty\). Finally we check that the full \(C^{1,\frac{1}{4}}\)-convergence in time holds. To this end, it suffices to show that every sequence \( (\tilde t_k)_{k=1}^\infty \subset \mathbb R_+\) with \(\tilde t_k \rightarrow \infty\) has a subsequence converging to \(\psi_\infty\)
in \(C^{1,\frac{1}{4}}(\partial F)\). Indeed, by arguing as previously in a case of such sequence \( (\tilde t_k)_{k=1}^\infty \subset \mathbb R_+\), we find a subsequence converging to some limit \(\tilde \psi_\infty \in H^3(\partial F)\) in \(C^{1,\frac{1}{4}}(\partial F)\) and \(F_{\tilde \psi_\infty}\) is defined as a \(C^{1,\frac{1}{4}}\)-set.
We may again assume that the subsequence is the whole sequence and hence \( | E_{\tilde t_k} \Delta F_{\tilde \psi_\infty}| \rightarrow 0\), which implies together \eqref{pMT6} and the boundedness of \(\bar d_{E_\infty}\) that 
\[
D_{E_\infty}\left(F_{\tilde \psi_\infty}\right) = \lim_k D_{E_\infty}\left(E_{\tilde t_k}\right) = 0.
\]
This implies that \(E_\infty = F_{\psi_\infty}\) and further \(\psi_\infty = \tilde \psi_\infty\). Thus the first step has been concluded.
\newline
\newline
\emph{Proof of Step 2.} First we show that \(E_\infty\) is a smooth and critical set. Since \(E_\infty\) is a \(C^{1,\frac{1}{4}}\)-set, then thanks to Lemma \ref{Lwcrit} it suffices to show it to be stationary. We need to find  \(\lambda_\infty \in \mathbb R\) such that for every \(f \in C^\infty(\To^n;\mathbb R^n)\)  
\begin{equation}
\label{pMT8}
\int_{\partial E_\infty} \div_\tau f \ \d\H^{n-1} = \lambda_\infty \int_{\partial E_\infty}  \langle  f, \nu_\infty \rangle \ \d \H^{n-1},
\end{equation}
where \(\nu_\infty\) is the corresponding unit normal field of \(\partial E_\infty\) with inside-out orientation. 
Since \(\psi_t \rightarrow \psi_\infty\) in \(C^1(\partial F)\), then \( \Phi_{\psi_t} \rightarrow  \Phi_{\psi_\infty}\),  \(\nu_t \circ \Phi_{\psi_t} \rightarrow \nu_\infty \circ \Phi_{\psi_\infty}\) and \(J_\tau \Phi_{\psi_t} \rightarrow J_\tau \Phi_{\psi_\infty}\) uniformly on \(\partial F\).
Thus by using the change of variables formula we obtain for every \(f \in C^\infty(\To^n;\mathbb R^n)\)
\begin{align}
\label{pMT9a}
\int_{\partial E_t} \div_\tau f \ \d\H^{n-1} &\longrightarrow \int_{\partial E_\infty} \div_\tau f \ \d\H^{n-1} \ \ \ \text{and} \\
\label{pMT9b}
\int_{\partial E_t}  \langle  f, \nu_t \rangle \ \d\H^{n-1} &\longrightarrow \int_{\partial E_\infty}  \langle  f, \nu_\infty \rangle \ \d\H^{n-1}.
\end{align}
By using \eqref{pMT1}, \eqref{pMT2} and Hölder's inequality we see
%
%
that \(\bar H_t\) is bounded in time and hence we find a sequence \((\bar H_{t_k})_{k}\), \(t_k \rightarrow \infty\), converging to some real number say \( \lambda_\infty\). 
By the divergence theorem
\begin{align*}
\int_{\partial E_{t_k}} \div_\tau f \ \d\H^{n-1} 
&= \int_{\partial E_{t_k}}  H_{t_k} \langle  f, \nu_t \rangle \ \d\H^{n-1} \\
&=   \bar H_{t_k} \int_{\partial E_{t_k}} \langle  f, \nu_{t_k} \rangle \ \d\H^{n-1} +  \int_{\partial E_{t_k}} (H_{t_k}- \bar H_{t_k}) \langle  f, \nu_t \rangle \ \d\H^{n-1} 
\end{align*}
and thus by letting \({t_k}\rightarrow \infty\) and recalling \eqref{pMT9a} and \eqref{pMT9b} we obtain \eqref{pMT8}, since 
\begin{align*}
\left|\int_{\partial E_{t_k}} (H_{t_k}- \bar H_{t_k}) \langle  f, \nu_{t_k} \rangle \ \d\H^{n-1}\right| 
&\leq \sup_{\To^n} |f| |\partial E_{t_k}|^\frac{1}{2} \|(H_{t_k}- \bar H_{t_k})\|_{L^2(\partial E_{t_k})}\\
&\overset{\eqref{pMT1}}{\leq} \sup_{\To^n} |f| \left(K |\partial F|\right)^\frac{1}{2} \|(H_{t_k}- \bar H_{t_k})\|_{L^2(\partial E_{t_k})}\\
&\overset{\eqref{m1}}{\leq} \sup_{\To^n} |f| \left(K |\partial F|\|\psi_0\|_{H^3(\partial F)} e^{-\sigma_1{t_k}}\right)^\frac{1}{2} .
\end{align*}
Thus \(E_\infty\) is a smooth and critical set and since \(\|\psi_\infty\|_{H^3(\partial F)} \leq \epsilon_1\) (recall the choice of \(\epsilon_1\)) and \(|E_\infty| = |F|\) (by \eqref{pMT4}), it follows from Lemma \ref{Ltransl0} that \(E_\infty = F + p\) for some \(p \in \mathbb R^n\).
Since now \(d_H(F,E_\infty) \leq \|\psi_\infty\|_{L^\infty(\partial F)}\), then it follows from \eqref{pMT7} that \(d_H(F,E_\infty) \rightarrow 0\) as \(\|\psi_0\|_{H^3(\partial F)}\) tends to zero.
This implies that we may choose \(p\) in such a way that simultaneously \(p \rightarrow 0\).
\newline
\newline
\emph{Proof of Step 3.}  Since now \(F+ p = E_\infty\) and  \(\|\psi_\infty\|_{C^1(\partial E)},\|\psi_t\|_{C^1(\partial E)} \leq c \), then by (iii) 
we may write \(\partial E_t\) as a smooth graph  in normal direction over \(\partial (F+p)\), i.e.,
for every \(t \in [0,\infty[\) there is a unique \(\varphi_t \in C^\infty(\partial E_\infty)\) for which \(E_t =(E_\infty)_{\varphi_t}\). 
Again, for every \(t \in [0,\infty[\)
\[
\|\varphi_t\|_{L^2(\partial E_\infty)}  \overset{\eqref{pMT3}}{\leq} K \sqrt{D_{E_\infty}(E_t)} \overset{\eqref{pMT6}}{\leq} K C_F^\frac{1}{2}  e^{-\frac{\sigma_1}{2} t}  
\]
and
\begin{align*}
\|\varphi_t\|_{H^3(\partial E_\infty)} 
&\overset{\eqref{pMT4}}{\leq} K\left(\|\nabla_\tau H_t\|_{L^2(\partial E_t)}+ \sqrt{D_{E_\infty}(E_t)}\right)\\
&\overset{\eqref{m2}}{\leq} K\left(\|\nabla_\tau H_0\|_{L^2(\partial E_0)} + C_0^\frac{1}{2}\|\bar H_0 - H_0\|_{L^2(\partial E_0)}+ \sqrt{D_{E_\infty}(E_t)}\right)\\
&\overset{\eqref{pMT4}}{\leq} K\left(K\|\psi_0\|_{H^3(\partial F)} + C_0^\frac{1}{2}\|\bar H_0 - H_0\|_{L^2(\partial E_0)}+ \sqrt{D_{E_\infty}(E_t)}\right)\\
&\overset{\eqref{m1}}{\leq} K\left( K\|\psi_0\|_{H^3(\partial F)}  +  C_0^\frac{1}{2} \|\psi_0\|_{H^3(\partial F)}^\frac{1}{2}+ \sqrt{D_{E_\infty}(E_t)}\right)\\
&\overset{\eqref{pMT6}}{\leq} K\left( K\|\psi_0\|_{H^3(\partial F)} +  C_0^\frac{1}{2} \|\psi_0\|_{H^3(\partial F)}^\frac{1}{2}+ C_F^\frac{1}{2}\|\psi_0\|_{H^3(\partial F)}^\frac{1}{2} \right)\\
& \leq K\left( K\epsilon_1^\frac{1}{2}+ C_0^\frac{1}{2}+C_F^\frac{1}{2} \right)\|\psi_0\|_{H^3(\partial F)}
\end{align*}
This means that there exists a positive constant \(C'_F\) independent of the choice of \(\psi_0\)
such that \(\|\varphi_t\|_{L^2(\partial E_\infty)} \leq C'_F \|\psi_0\|_{H^3(\partial F)}^\frac{1}{2}  e^{-\frac{\sigma_1}{2} t} \) and \(\|\varphi_t\|_{H^3(\partial E_\infty)}  \leq C'_F \|\psi_0\|_{H^3(\partial F)}^\frac{1}{2}\) for every \(t \in [0,\infty[\). Since 
\(\partial (F+p)\) shares same interpolation bounds than \(\partial F\), then by using the previous estimates and  Lemma \ref{Int1}
in the case
\begin{equation}
\label{pMT10}
\frac{1}{5} = \frac{j}{n-1} + \left(\frac{1}{2} - \frac{3}{n-1}\right)\alpha + \frac{1-\alpha}{2}
\end{equation}
with \(\alpha = \frac{j}{3} + \frac{n-1}{10}\), \(j=0,1,2\) and a corresponding interpolation constant (which we may assume to be the same \(C'_F\))
we obtain 
\begin{align*}
\|\nabla_\co^j\varphi_t\|_{L^5(\partial E_\infty)} 
&\leq C'_F \|\nabla_\co^3 \varphi_t\|_{L^2(\partial E_\infty)}^{\frac{j}{3} + \frac{n-1}{10}}  \|\varphi_t\|_{L^2(\partial E_\infty)}^{\frac{3-j}{3} - \frac{n-1}{10}}+ C_F' \|\varphi_t\|_{L^1(\partial E_\infty)} \\
&\leq C'_F\|\varphi_t\|_{H^3(\partial E_\infty)}^{\frac{j}{3} + \frac{n-1}{10}}  \|\varphi_t\|_{L^2(\partial E_\infty)}^{\frac{3-j}{3} - \frac{n-1}{10}} + C_F'|\partial F|^\frac12
\|\varphi_t\|_{L^2(\partial E_\infty)}\\
&\leq (C'_F)^2 \|\psi_0\|_{H^3(\partial F)}^\frac{1}{2} \left(e^{-\frac{\sigma_1}{2}\left(\frac{3-j}{3} - \frac{n-1}{10}\right)t} +  |\partial F|^\frac12 e^{-\frac{\sigma_1}{2} t}\right) \\
&\leq (1+|\partial F|^\frac12)(C'_F)^2 \|\psi_0\|_{H^3(\partial F)}^\frac{1}{2} e^{-\frac{\sigma_1}{2}\left(\frac{1}{3} - \frac{n-1}{10}\right)t} \\
&\overset{\eqref{pMT5}}{=} (1+|\partial F|^\frac12)(C'_F)^2 \|\psi_0\|_{H^3(\partial F)}^\frac{1}{2} e^{-\sigma_0t},
\end{align*}
which implies that there exists such \(C\) as claimed.
\end{proof}

Let us finally recall Remark \ref{MT2}. In the last step of the previous proof we may replace \(5\) in the left hand side of \eqref{pMT10} with any \(q\geq 1\) as long as the corresponding \(\alpha\) is strictly less than \(1\), because \(\sigma_0 = \frac{\eta_0}{2}(1- \alpha)\) would then be strictly positive.
In the case \(n=3\) by replacing \(5\) with any \(1 \leq q<\infty\) we obtain
\[
\alpha = 1 -\frac{2}{3q},
\]
so we see that any such \(q\) will do. Whereas in the case \(n=4\) doing so yields
\[
\alpha = \frac{7}{6} -\frac{1}{q}
\]
and hence \(q\) may take any values in the interval \([1,6[\).


\section{Appendix}

\subsection*{\(C^1\)- and \(H^3\)-bounds}

In this subsection we prove the estimates \eqref{unifcontrol1a} and \eqref{unifcontrol1b}, Lemma \ref{LMC}, Lemma \ref{LC1unifcontrol}
and Lemma \ref{LH3unifcontrol}.
We will use the same notation as earlier without any further mention.
For sake of simplicity, we use the generic symbol \(C\) for a constant which may change from line to line
in the estimates but ultimately depends only on $E$.

Let us first fix a smooth set \(E\) and let \(U_E\) be a regular neighborhood of \(\partial E\) which we may assume to be of the form
\(U_E= \partial E + B(0,r)\) with some positive $r$.
 Recall that \(\bar d_E\) and \(\pi_{\partial E}\) 
are \(C^k\)-bounded in \(U_E\) for every \(k \in \mathbb N\). For every \(\psi \in C^\infty(\partial E)\) we set the smooth extension 
\(\psi_E = \psi \circ \pi_{\partial E}\). Then \(\nabla \psi_E = \nabla_\tau \psi\) on \(\partial E\) and
 moreover the following decomposition holds on \(\partial E\)
\begin{equation}
\label{decomp}
\D^2 \psi_E =-\nu_E \otimes B_E \nabla_\tau \psi - B_E \nabla_\tau \psi \otimes \nu_E + D^2_\tau \psi.
\end{equation}
For every \(\psi \in C^\infty(\partial E)\) we set \(\Phi_\psi: \partial E \rightarrow \To^n\), \(\Phi_\psi(x)=x+\psi\nu_E(x)\), as in Lemma \ref{Lc1equivalence}.
We extend \(\Phi_\psi\) to be the smooth map \(U_E \rightarrow \To^n\) given by \(\Phi_\psi (x) = x + \psi_E(x)\nabla \bar d_E(x)\). Then
\begin{align*}
\D\Phi_\psi = 
\begin{cases}
I + \psi_E \D^2\bar d_F + \nabla \bar d_F \otimes \nabla \psi_E  &\text{in} \ \ \ U_E \ \ \ \text{and} \\
 I + \psi B_E + \nu_E \otimes \nabla_\tau \psi  &\text{on} \ \ \ \partial E.
\end{cases}
\end{align*}
Now \(\Phi_\psi \rightarrow \id \) and \(\D\Phi_\psi \rightarrow I\) uniformly in \(U_E\) as \(\|\psi\|_{C^1(\partial E)} \rightarrow 0\). Thus, by possibly replacing \(U_E\) with \(\partial E + B(0,r/2)\), 
\(\Phi_\psi\) is an orientation preserving diffeomorphism from \(U_E\) to its image and the set \(E_\psi\) is well-defined, when 
\(\|\psi\|_{C^1(\partial E)}\) is small enough. \footnote{This implies the first part of Lemma \ref{Lc1equivalence} for smooth functions (the other cases are similar to check). The details of second part are 
left to the reader.} The inverse matrix on \(\partial E\) is then
\[
(\D\Phi_\psi)^{-1} = \left(I-\nu_E \otimes \nabla_\tau \psi\right) \left(I+\psi B_E\right)^{-1}.
\]
From now on, we assume \(\|\psi\|_{C^1(\partial E)} \leq \delta\) with \(\delta\) small enough so that the previous hold. 
Further, we use the shorthand notation \(A_\psi =  \left(I+\psi B_E\right)^{-1}\) on \(\partial E\). 
Set \(u_\psi : \Phi_\psi(U_E) \rightarrow \mathbb R\), \(u_\psi = \bar d_E \circ \Phi_\psi^{-1}\). Then \(\partial E_\psi = u_\psi^{-1}(0)\) and
\(\nu_{E_\psi} = \nicefrac{\nabla u_\psi}{|\nabla u_\psi|} \) on \(\partial E_\psi\). Again
\[
 \nabla u_\psi \circ \Phi_\psi =  (\D \Phi_\psi)^{-\T} \nu_E  =  \nu_E - A_\psi \nabla_\tau \psi \longrightarrow \nu_E 
\]
uniformly on \(\partial E\) as \(\|\psi\|_{C^1(\partial E)} \rightarrow 0\). Thus \(\nu_{E_\psi} \circ \Phi_\psi\) also converges uniformly to \(\nu_E\) as \(\psi\) goes to zero in \(C^1\)-sense. The second fundamental form on \(\partial E_\psi\) can be written, with help of \(u_\psi\), as
\begin{equation}
\label{graphB}
B_{E_\psi} = P_{\partial E_\psi} \D_\tau\left( \frac{\nabla u_\psi}{|\nabla u_\psi|}\right) 
= \frac{1}{|\nabla u_\psi|}(I-\nu_{E_\psi} \otimes \nu_{E_\psi})\D^2 u_\psi (I-\nu_{E_\psi} \otimes \nu_{E_\psi}).
\end{equation}
Omitting the details we may further compute that 
\begin{equation}
\label{clear}
\D^2 u_\psi \circ \Phi_\psi = A_\psi \Bigg[B_E -\psi  \left( \sum_{k=1}^n  \langle\nu_E - A_\psi \nabla_\tau \psi, v_k\rangle\partial_{v_k} \mathrm D^2 \bar d_E \right)  -\mathrm D^2 \psi_E \Bigg]A_\psi
\end{equation}
on \(\partial E\), where \(v_1,\ldots,v_n\) is any orthonormal basis of \(\mathbb R^n\). Hence  \eqref{decomp}, \eqref{graphB} and \eqref{clear} and the \(C^1\)-bound \(\delta\) imply that \(|\D^2 u_\psi \circ \Phi_\psi| \leq C(1+|\D_\tau^2 \psi|)\) on \(\partial E\) with some constant \(C\) and so \eqref{lMC2} holds. Again, by combining the expressions \eqref{graphB} and \eqref{clear} we may write on \(\partial E\)
\begin{equation}
\label{graphH}
H_{E_\psi} \circ \Phi_\psi = \tr \left(Q(\ \cdot \ , \psi,\nabla_\tau \psi )\Bigg[B_E -\psi  \left( \sum_{k=1}^n  \langle\nu_E - A_\psi \nabla_\tau \psi, v_k\rangle\partial_{v_k} \mathrm D^2 \bar d_E \right)  -\mathrm D^2 \psi_E \Bigg]\right),
\end{equation}
where \(Q: \partial E \times [-\delta,\delta] \times [-\delta,\delta]^n \rightarrow \mathcal L (\mathbb R^n;\mathbb R^n)\) is a smooth map with \(Q(\ \cdot \ , 0 ,0 ) = P_{\partial E}\). Thus
by using Taylor's expansion we may write on \(\partial E\)
\begin{equation}
\label{linexpansion}
Q(\ \cdot \ , \psi ,\nabla_\tau \psi ) = P_{\partial E} + \psi S(\ \cdot \ , \psi ,\nabla_\tau \psi ) + [\langle\nabla_\tau \psi, r_{ij}(\ \cdot \ , \psi ,\nabla_\tau \psi )\rangle]_{ij}
\end{equation}
with some smooth \(S\) and \(r_{ij}\). Thus by substituting  \eqref{linexpansion} and \eqref{decomp} to \eqref{graphH} we obtain the expression \eqref{lMC1} after regrouping the terms and Lemma \ref{LMC} is clear. 

Suppose that \(h \in L^p(\partial E_\psi)\) with \(1\leq p < \infty\) and \(\varphi \in C^\infty(\partial E_\psi)\). By using the change of variable formula we have
\begin{align*}
\int_{\partial E_\psi} |h|^p \ \d \H^{n-1} &= \int_E |h \circ \Phi_\psi|^p  J_\tau \Phi_\psi \ \d \H^{n-1}  \ \ \ \text{and}\\
\int_{\partial E_\psi} |\nabla_\tau \varphi|^p \ \d \H^{n-1} 
&= \int_{\partial E_\psi} |P_{\partial E_\psi} \nabla\left((\varphi \circ \Phi_\psi)_E \circ \Phi_\psi^{-1}\right) |^p \ \d \H^{n-1} \\
&= \int_{\partial E_\psi} |P_{\partial E_\psi} (\D\Phi_\psi^{-1})^{\T} \nabla (\varphi \circ \Phi_\psi)_E \circ \Phi_\psi^{-1} |^p \ \d \H^{n-1} \\
&= \int_{\partial E} |(I -  \nu_{E_\psi} \circ \Phi_\psi \otimes  \nu_{E_\psi} \circ \Phi_\psi ) (\D\Phi_\psi)^{-\T} \nabla_\tau (\varphi \circ \Phi_\psi) |^p  J_\tau \Phi_\psi \ \d \H^{n-1}. 
\end{align*}
Since now \((I -  \nu_{E_\psi} \circ \Phi_\psi \otimes  \nu_{E_\psi} \circ \Phi_\psi ) (\D\Phi_\psi)^{-\T} \rightarrow P_{\partial E}\) and \(J_\tau \Phi_\psi \rightarrow 1\) uniformly on \(\partial E\) as \(\|\psi\|_{C^1(\partial E)}\) tends to zero and \(P_{\partial E}\nabla_\tau (\varphi \circ \Phi_\psi) = \nabla_\tau (\varphi \circ \Phi_\psi)\), 
then by decreasing \(\delta\), if necessary, we find a uniform constant \(C\) such that
\begin{align*}
C^{-1} \|h \circ \Phi_\psi\|_{L^p(\partial E)}^p &\leq  \|h\|_{L^p(\partial E_\psi)}^p  \leq C \|h \circ \Phi_\psi\|_{L^p(\partial E)}^p \ \ \ \text{and} \\
C^{-1-p} \|\nabla_\tau (\varphi \circ \Phi_\psi)\|_{L^p(\partial E)}^p &\leq  \|\nabla_\tau \varphi\|_{L^p(\partial E_\psi)}^p  \leq C^{1+p} \|\nabla_\tau (\varphi \circ \Phi_\psi)\|_{L^p(\partial E)}^p,
\end{align*}
whenever \(\|\psi\|_{C^1(\partial E)} \leq \delta\). This establishes \eqref{unifcontrol1a} and \eqref{unifcontrol1b}. Again, Lemma \ref{LC1unifcontrol} is a direct consequence of 
\eqref{unifcontrol1a}, \eqref{unifcontrol1b} and Remark \ref{H3Remark}.

Since \(|\partial E_\psi|\) is now bounded, it is enough to prove Lemma \ref{LH3unifcontrol} for \(p=6\). To this end, choose an arbitrary 
\(\varphi \in C^\infty(\partial E_\psi)\) and define a smooth extension \(\varphi_{E_\psi} = \varphi \circ \pi_{\partial E_\psi}\) to some regular neighborhood of 
\(\partial E_\psi\) as before.
A straightforward but a rather long  computation 
yields
\begin{align*}
\D_\tau^2(\varphi \circ \Phi_\psi) 
&= P_{\partial E} (\D^2 \varphi_{E_\psi} \circ \Phi_\psi) \D \Phi_\psi P_{\partial E} + \langle \nu_E, \nabla_\tau \varphi \circ \Phi_\psi \rangle (\D_\tau^2 \psi- B_E) \\
&+\psi \sum_{i=1}^n P_{\partial E} e_i \otimes P_{\partial E} \left((\partial_i \D \bar d _E) (\nabla_\tau \varphi \circ \Phi_\psi) + (\D \Phi_\psi)^\T \D^2\varphi_{E_\psi}\circ \Phi_\psi \partial_i \nabla \bar d_E\right)\\
&+B_E (\nabla_\tau \varphi \circ \Phi_\psi) \otimes \nabla_\tau \psi + \nabla_\tau \psi \otimes B_E (\nabla_\tau \varphi \circ \Phi_\psi)+ \nabla_\tau \psi \otimes P_{\partial E} (\D\Phi_\psi)^\T(\D^2 \varphi_{E_\psi} \circ \Phi_\psi) \nu_E.
\end{align*}
 Hence with help of the previous identity
\begin{align}
\notag
|\D_\tau^2(\varphi \circ \Phi_\psi)| &\leq C|\D^2 \varphi_{E_\psi} \circ \Phi_\psi| + (C+ |\D_\tau^2 \psi|)|\nabla_\tau \varphi \circ \Phi_\psi| \\
\notag
&\overset{\eqref{decomp}}{\leq} C|\D^2_\tau \varphi \circ \Phi_\psi| + C\left(1+|B_{E_\psi} \circ \Phi_\psi|+ |\D_\tau^2 \psi|\right)|\nabla_\tau \varphi \circ \Phi_\psi| \\
\label{D2estimate}
&\overset{\eqref{lMC2}}{\leq} C|\D^2_\tau \varphi \circ \Phi_\psi| + C\left(1+ |\D_\tau^2 \psi|\right)|\nabla_\tau \varphi \circ \Phi_\psi|.
\end{align}

Again, Lemma \ref{Int1} is satisfied for \(\varphi\) with \(p=6\), \(r=q = 2\), \(j=1\), \(m=2\) and 
\(n=3,4\), where \(\alpha_6 = \frac{5}{6}\) for \(n=3\) and \(\alpha_6=1\) for \(n=4\). Then 
\begin{align*}
\| \nabla_\tau \varphi\|_{L^6(\partial E_\psi)}
&\overset{\eqref{unifcontrol1b}}{\leq} C \| \nabla_\tau (\varphi \circ \Phi_\psi)\|_{L^6(\partial E)}\\
&\overset{\text{Lemma \ }\ref{Int1}}{\leq}   C \| \D_\tau^2 (\varphi \circ \Phi_\psi)\|_{L^2(\partial E)}^{\alpha_6}\|\varphi \circ \Phi_\psi\|_{L^2(\partial E)}^{1-\alpha_6}\\
&\leq C \| \D_\tau^2 (\varphi \circ \Phi_\psi)\|_{L^2(\partial E)}+ C\| \varphi \circ \Phi_\psi\|_{L^2(\partial E)} \\
&\overset{\eqref{D2estimate}}{\leq}C \| \D_\tau^2 \varphi \circ \Phi_\psi\|_{L^2(\partial E)} +C\|\nabla_\tau \varphi \circ \Phi_\psi\|_{L^2(\partial E)}+ C\| \varphi \circ \Phi_\psi\|_{L^2(\partial E)}\\
&+C\left(\int_{\partial E}|D_\tau^2\psi|^2|\nabla_\tau \varphi \circ \Phi_\psi|^2 \ \d\H^{n-1}\right)^\frac{1}{2}\\
&\overset{\eqref{unifcontrol1a}+\eqref{unifcontrol1b}}{\leq}  C \| \D_\tau^2 \varphi \|_{L^2(\partial E_\psi)} + C\| \varphi \|_{H^1(\partial E_\psi)}\\
&+C\left(\int_{\partial E}|D_\tau^2\psi|^2|\nabla_\tau \varphi \circ \Phi_\psi|^2 \ \d\H^{n-1}\right)^\frac{1}{2} \\
&\leq C \|\D_\tau^2 \varphi \|_{L^2(\partial E_\psi)} + C\| \varphi \|_{H^1(\partial E_\psi)} +C \|D_\tau^2 \psi\|_{L^3(\partial E)} \|\nabla_\tau \varphi \circ \Phi_\psi \|_{L^6(\partial E)}\\
&\overset{\eqref{unifcontrol1a}}{\leq} C \|\D_\tau^2 \varphi \|_{L^2(\partial E_\psi)} + C\| \varphi \|_{H^1(\partial E_\psi)} +C \|D_\tau^2 \psi\|_{L^3(\partial E)} \|\nabla_\tau \varphi\|_{L^6(\partial E_\psi)}\\
&\overset{\eqref{llaplace1}}{\leq} C \|\Delta_\tau \varphi \|_{L^2(\partial E_\psi)} + C\| \varphi \|_{H^1(\partial E_\psi)} +C \|D_\tau^2 \psi\|_{L^3(\partial E)} \|\nabla_\tau \varphi\|_{L^6(\partial E_\psi)}  \\
&+C\left(\int_{\partial E_\psi}|B_{E_\psi}|^2|\nabla_\tau \varphi |^2 \ \d\H^{n-1}\right)^\frac{1}{2} \\
&\overset{\eqref{unifcontrol1a}}{\leq} C \|\Delta_\tau \varphi \|_{L^2(\partial E_\psi)} + C\| \varphi \|_{H^1(\partial E_\psi)} +C \|D_\tau^2 \psi\|_{L^3(\partial E)} \|\nabla_\tau \varphi\|_{L^6(\partial E_\psi)}  \\
&+C\left(\int_{\partial E}|B_{E_\psi} \circ \Phi_\psi|^2|\nabla_\tau \varphi \circ \Phi_\psi |^2 \ \d\H^{n-1}\right)^\frac{1}{2}\\
&\overset{\eqref{lMC2}}{\leq} C \|\Delta_\tau \varphi \|_{L^2(\partial E_\psi)} + C\| \varphi \|_{H^1(\partial E_\psi)} +C \|D_\tau^2 \psi\|_{L^3(\partial E)} \|\nabla_\tau \varphi\|_{L^6(\partial E_\psi)}  \\
&+C\left(\int_{\partial E}|\D_\tau^2 \psi|^2|\nabla_\tau \varphi \circ \Phi_\psi |^2 \ \d\H^{n-1}\right)^\frac{1}{2} +  C\|\nabla_\tau \varphi \circ \Phi_\psi \|_{L^2(\partial E)} \\
&\leq C \|\Delta_\tau \varphi \|_{L^2(\partial E_\psi)} + C\| \varphi \|_{H^1(\partial E_\psi)} +C \|D_\tau^2 \psi\|_{L^3(\partial E)} \|\nabla_\tau \varphi\|_{L^6(\partial E_\psi)}  \\
&+C \|D_\tau^2 \psi\|_{L^3(\partial E)} \|\nabla_\tau \varphi \circ \Phi_\psi \|_{L^6(\partial E)}+  C\|\nabla_\tau \varphi \circ \Phi_\psi \|_{L^2(\partial E)} \\
&\overset{\eqref{unifcontrol1b}}{\leq} C \|\Delta_\tau \varphi \|_{L^2(\partial E_\psi)} + C\| \varphi \|_{H^1(\partial E_\psi)} +C \|D_\tau^2 \psi\|_{L^3(\partial E)} \|\nabla_\tau \varphi\|_{L^6(\partial E_\psi)}.
\end{align*}
Again, by Remark \ref{H3Remark} \(\|\psi\|_{C^1(\partial E_\psi)},\|\D_\tau^2 \psi\|_{L^3(\partial E)} \leq C \|\psi\|_{H^3(\partial E_\psi)}\), when \(n\) is 3 or 4, so the previous estimate implies Lemma \ref{LH3unifcontrol} in the case \(p=6\).

\subsection*{Time derivatives}

In this subsection we derive the formulas \eqref{ltimederivatives1} and \eqref{ltimederivatives2}  of Lemma \ref{Ltimederivatives}. In particular, \eqref{ltimederivatives1} is probably well-known, but for sake of completeness we compute it too.

It follows from the \emph{semi-group property} that we need to check
\eqref{ltimederivatives1} and \eqref{ltimederivatives2} at the time \(t=0\).
 At first we list some facts. For that let \((E_t)_{t \in [0,T[}\) be any smooth flow in \(\To^n\) with a corresponding smoothly parametrized family of diffeomorphism 
\((\Phi_t)_{t \in [0,T[}\). Set the initial velocity vector field 
\[
X_0 = \partial_t \Phi_t \Big|_{t=0}.
\] 
Then \(V_0 = \langle X_0,\nu_0\rangle\) and the following hold on the initial boundary \(\partial E_0\).
\begin{align}
\label{dN1}
\frac{\partial}{\partial t}  \nu_t \circ \Phi_t \Big|_{t=0} &= -(\D_\tau X_0)^\T \nu_0,\\
\label{dJ1}
\frac{\partial}{\partial t} J_\tau\Phi_t\Big|_{t=0} &= \div_\tau X_0 \ \ \ \text{and}\\
\label{dH1}
\frac{\partial}{\partial t}  H_t \circ \Phi_t \Big|_{t=0} &= - \div_\tau \left( (\D_\tau X_0)^\T \nu_0 \right) - \tr \left( B_0 \D_\tau X_0\right).
\end{align}
For instance, \eqref{dN1} is directly computed in \cite{CMM}.
There are also an open neighborhood \(U\) of \(\bigcup_{t \in [0,T[} \partial E_t \times \{t\}\) and a smooth map \(H: U \rightarrow \To^n\) such that 
\(H(\ \cdot \ , t) = H_t\) on \(\partial E_t\) for every \(t \in [0,T[\).  Again, we recall that every smooth flow admits a \emph{normal parametrization}, which follows essentially from \cite[Theorem 8]{AD},
so we may assume \((\Phi_t)_{t \in [0,T[}\) to be such a parametrization. 
That is, 
\[
\partial_s \Phi_{t+s} \Big|_{s=0} = (V_t \circ \Phi_t)(\nu_t \circ \Phi_t)
\] 
on \(\partial E_0\) for 
every \(t \in [0,T[\), in particular \( X_0 = V_0\nu_0\) on \(\partial E_0\). Suppose from now on that \((E_t)_t\) is a volume preserving mean curvature flow, 
so we may write \(X_0 = (\bar H_0 - H_0)\nu_0\) and
\begin{equation}
\label{DtauX}
\D_\tau X_0 = (\bar H_0 - H_0) B_0 - \nu_0 \otimes \nabla_\tau H_0
\end{equation}
on \(\partial E_0\). Hence \eqref{dN1}, \eqref{dJ1} and \eqref{dH1} can be rewritten as
\begin{align}
\label{dN2}
\frac{\partial}{\partial t}  \nu_t \circ \Phi_t \Big|_{t=0} &= \nabla_\tau H_0,\\
\label{dJ2}
\frac{\partial}{\partial t} J_\tau\Phi_t\Big|_{t=0} &= (\bar H_0-H_0)H_0 \ \ \ \text{and}\\
\label{dH2}
\frac{\partial}{\partial t}  H_t \circ \Phi_t \Big|_{t=0} &= \Delta_\tau H_0 - (\bar H_0 - H_0)|B_0|^2.
\end{align}
The identity \eqref{dH2} can be also obtained in a more elegant way using the results from \cite{CMM}.
By using the change of variables formula and integration by parts we compute first
\begin{align*}
\frac{\d}{\d t} \|\bar H_t - H_t\|_{L^2(\partial E_t)}^2 \Big |_{t=0} 
= \frac{\d}{\d t} &\int_{\partial E_0} \left(\bar H_t \circ \Phi_t - H_t \circ \Phi_t \right)^2 J_\tau \Phi_t \ \d \H^{n-1} \Big |_{t=0} \\
= &\int_{\partial E_0} 2\left(\bar H_0 - H_0\right)\left(\frac{\partial}{\partial t}  \bar H_t \circ \Phi_t \Big |_{t=0}  - \frac{\partial}{\partial t}  H_t \circ \Phi_t \Big |_{t=0}  \right)  \ \d \H^{n-1}\\
+&\int_{\partial E_0} \left(\bar H_0 - H_0\right)^2  \frac{\partial}{\partial t}  J_\tau \Phi_t \Big |_{t=0} \ \d \H^{n-1} \\
= -2&\int_{\partial E_0}  \frac{\partial}{\partial t}  H_t \circ \Phi_t \Big |_{t=0}\left(\bar H_0 - H_0\right)  \ \d \H^{n-1}\\
+&\int_{\partial E_0} \left(\bar H_0 - H_0\right)^2  \frac{\partial}{\partial t}  J_\tau \Phi_t \Big |_{t=0} \ \d \H^{n-1} \\
\overset{\eqref{dJ2}+\eqref{dH2}}{=} -2&\int_{\partial E_0}  \left(\Delta_\tau H_0 - (\bar H_0 - H_0)|B_0|^2\right)\left(\bar H_0 - H_0\right)  \ \d \H^{n-1}\\
+&\int_{\partial E_0} H_0\left(\bar H_0 - H_0\right)^3 \ \d \H^{n-1} \\
=-2&\int_{\partial E_0} |\nabla_\tau H_0|^2 - |B_0|^2\left(\bar H_0 - H_0\right)^2 \ \d \H^{n-1}\\
+&\int_{\partial E_0} H_0\left(\bar H_0 - H_0\right)^3 \ \d \H^{n-1} \\
=-2&\partial^2P(\partial E_0)[\bar H_0 - H_0] +\int_{\partial E_0} H_0\left(\bar H_0 - H_0\right)^3 \ \d \H^{n-1}.
\end{align*}
To compute  \eqref{ltimederivatives2} at \(t=0\), we evaluate the term \(\frac{\partial}{\partial t}\left(\nabla_\tau H_t \circ \Phi_t \right)\Big|_{t=0}\) on \(\partial E_0\). We use the notation \(\nabla\) for the spatial gradient. Now
\begin{align*}
\frac{\partial}{\partial t}\left(\nabla_\tau H_t \circ \Phi_t \right)\Big|_{t=0} 
&= \frac{\partial}{\partial t} \left(I - \nu_t \circ \Phi_t \otimes\nu_t \circ \Phi_t \right) \nabla H (\ \cdot \ , t) \circ \Phi_t \Big |_{t=0} \\
&=-\left( \frac{\partial}{\partial t} \nu_t \circ \Phi_t \Big |_{t=0} \otimes\nu_0 + \nu_0 \otimes  \frac{\partial}{\partial t} \nu_t \circ \Phi_t \Big |_{t=0} \right) \nabla H (\ \cdot \ , 0) \\
&+\left(I - \nu_0 \otimes\nu_0 \right) \frac{\partial}{\partial t}  \nabla H (\ \cdot \ , t) \circ \Phi_t \Big |_{t=0} \\
&\overset{\eqref{dN2}}{=} - \langle \nu_0,\nabla H (\ \cdot \ , 0)\rangle \nabla_\tau H_0 - |\nabla_\tau H_0|^2 \nu_0 \\
&+\left(I - \nu_0 \otimes\nu_0 \right) \frac{\partial}{\partial t}  \nabla H (\ \cdot \ , t) \circ \Phi_t \Big |_{t=0} 
\end{align*}
and 
\begin{align*}
 \frac{\partial}{\partial t}  \nabla H (\ \cdot \ , t) \circ \Phi_t \Big |_{t=0} 
&= \frac{\partial}{\partial t} (\D \Phi_t)^{-\T} \nabla \left(H (\ \cdot \ , t) \circ \Phi_t\right) \Big |_{t=0}\\
&=\frac{\partial}{\partial t} (\D \Phi_t)^{-\T}\Big |_{t=0} \nabla H (\ \cdot \ , 0) +  \frac{\partial}{\partial t}\nabla \left(H (\ \cdot \ , t) \circ \Phi_t\right)\Big |_{t=0}\\
&=-\left(\D\frac{\partial}{\partial t} \Phi_t \Big |_{t=0} \right)^\T\nabla H (\ \cdot \ , 0) +   \nabla  \frac{\partial}{\partial t} \left(H (\ \cdot \ , t) \circ \Phi_t\right)\Big |_{t=0}\\
&=-\left(\D X_0 \right)^\T\nabla H (\ \cdot \ , 0) + \nabla  \frac{\partial}{\partial t} \left(H (\ \cdot \ , t) \circ \Phi_t\right)\Big |_{t=0}.
\end{align*}
By combining the two previous expressions
\begin{align}
\notag
\frac{\partial}{\partial t}\left(\nabla_\tau H_t \circ \Phi_t \right)\Big|_{t=0} 
&= - \langle \nu_0,\nabla H (\ \cdot \ , 0)\rangle \nabla_\tau H_0 - |\nabla_\tau H_0|^2 \nu_0 \\
\notag
&-\left(\D_\tau X_0 \right)^\T\nabla H (\ \cdot \ , 0) + \nabla_\tau  \frac{\partial}{\partial t} \left(H (\ \cdot \ , t) \circ \Phi_t\right)\Big |_{t=0} \\
\notag
&\overset{\eqref{DtauX}}{=} - \langle \nu_0,\nabla H (\ \cdot \ , 0)\rangle \nabla_\tau H_0 - |\nabla_\tau H_0|^2 \nu_0 \\
\notag
&-\left( (\bar H_0 - H_0) B_0 - \nu_0 \otimes \nabla_\tau H_0\right)^\T\nabla H (\ \cdot \ , 0) + \nabla_\tau  \frac{\partial}{\partial t} \left(H (\ \cdot \ , t) \circ \Phi_t\right)\Big |_{t=0}\\
\label{dgradH}
&= - |\nabla_\tau H_0|^2 \nu_0 -(\bar H_0 - H_0) B_0\nabla_\tau H_0 
+ \nabla_\tau  \frac{\partial}{\partial t} \left(H (\ \cdot \ , t) \circ \Phi_t\right)\Big |_{t=0}.
\end{align}

Thus by using the change of variables formula and integrating by parts we finally compute

\begin{align*}
\frac{\d}{\d t} \|\nabla_\tau H_t\|_{L^2(\partial E_t)}^2 \Big |_{t=0} 
= \frac{\d}{\d t} &\int_{\partial E_0} \langle \nabla_\tau H_t \circ \Phi_t,  \nabla_\tau H_t \circ \Phi_t  \rangle J_\tau \Phi_t \ \d \H^{n-1} \Big |_{t=0} \\
=2 &\int_{\partial E_0} \langle \frac{\partial}{\partial t} \nabla_\tau H_t \circ \Phi_t \Big |_{t=0},  \nabla_\tau H_0  \rangle \ \d \H^{n-1}  \\
+ &\int_{\partial E_0} |\nabla_\tau H_t|^2 \frac{\partial}{\partial t} J_\tau \Phi_t \Big |_{t=0}  \ \d \H^{n-1}\\
\overset{\eqref{dJ2}}{=}2 &\int_{\partial E_0} \langle \frac{\partial}{\partial t} \nabla_\tau H_t \circ \Phi_t \Big |_{t=0},  \nabla_\tau H_0  \rangle \ \d \H^{n-1}  \\
+ &\int_{\partial E_0} |\nabla_\tau H_t|^2 (\bar H_0-H_0)H_0  \ \d \H^{n-1}\\
\end{align*}
\begin{align*}
\overset{\eqref{dgradH}}{=}2 &\int_{\partial E_0} \langle  - (\bar H_0 - H_0)B_0 \nabla_\tau H_0 + \nabla_\tau  \frac{\partial}{\partial t}\left(H (\ \cdot \ , t) \circ \Phi_t\right)\Big |_{t=0},
\nabla_\tau H_0 \rangle \ \d \H^{n-1}  \\
+ &\int_{\partial E_0} |\nabla_\tau H_t|^2 (\bar H_0-H_0)H_0  \ \d \H^{n-1}\\
=-2 &\int_{\partial E_0} \Delta_\tau H_0 \frac{\partial}{\partial t}H_t \circ \Phi_t\Big |_{t=0} \ \d \H^{n-1}  \\
-2 &\int_{\partial E_0}(\bar H_0 - H_0) \langle \nabla_\tau H_0, B_0 \nabla_\tau H_0 \rangle \ \d \H^{n-1}  \\
+ &\int_{\partial E_0} |\nabla_\tau H_t|^2 (\bar H_0-H_0)H_0  \ \d \H^{n-1}\\
\overset{\eqref{dH2}}{=}-2 &\int_{\partial E_0} (\Delta_\tau H_0)^2 - (\bar H_0 - H_0)|B_0|^2\Delta_\tau H_0  \ \d \H^{n-1}  \\
-2 &\int_{\partial E_0}(\bar H_0 - H_0) \langle \nabla_\tau H_0, B_0 \nabla_\tau H_0 \rangle \ \d \H^{n-1}  \\
+ &\int_{\partial E_0} |\nabla_\tau H_t|^2 (\bar H_0-H_0)H_0  \ \d \H^{n-1}.
\end{align*}


\section*{Acknowledgments}

The author is very thankful to Vesa Julin for many helpful discussions and advice. 
The work was supported by the Academy of Finland grant 314227.



\begin{bibdiv}
\begin{biblist}

\bib{AFJM}{article}{
      author={Acerbi, Emilio},
      author={Fusco, Nicola},
      author={Julin, Vesa},
      author={Morini, Massimilliano},
       title={Nonlinear stability results for the modified Mullins-Sekerka and
  the surface diffusion flow},
     date={2019},
      journal={J. Differential Geom.},
       volume={113},
       pages={1\ndash 53},
}

\bib{AFM}{article}{
      author={Acerbi, Emilio},
      author={Fusco, Nicola},
      author={Morini, Massimilliano},
       title={Minimality via second variation for a nonlocal isoperimetric
  problem},
        date={2013},
     journal={Comm. Math. Phys.},
      volume={322},
       pages={515\ndash 557},
}

\bib{AD}{book}{
      author={Ambrosio, Luigi},
      author={Dancer, Norman},
       title={Calculus of variations and partial differential equations :
  topics on geometrical evolution problems and degree theory},
   publisher={Springer-Verlag},
     address={Berlin},
        date={2000},
}

\bib{AFP}{book}{
      author={Ambrosio, Luigi},
      author={Fusco, Nicola},
      author={Pallara, Diego},
       title={Functions of bounded variation and free discontinuity problems},
      series={Oxford Mathematical Monographs},
   publisher={Oxford University Press},
     address={New York},
        date={2000},
}

\bib{At}{article}{
      author={Athanassenas, Maria},
       title={Behaviour of singularities of the rotationally symmetric,
              volume-preserving mean curvature flow},
       date={2003},
       volume={17},
       pages={1\ndash 16},
}

\bib{Au}{book}{
      author={Aubin, Thierry},
       title={Some nonlinear problems in riemannian geometry},
      series={Springer Monographs in Mathematics},
   publisher={Springer-Verlag},
     address={Berlin},
        date={1998},
}

\bib{BdCE}{article}{
      author={Barbosa, J.~Lucas},
      author={do~Carmo, Manfredo},
      author={Eschenburg, Jost},
       title={Stability of hypersurfaces of constant mean curvature in
  {R}iemannian manifolds},
        date={1988},
     journal={Math. Z.},
      volume={197},
       pages={123\ndash 138},
}

\bib{CMM}{article}{
      author={Cagnetti, Filippo},
      author={Mora, Maria~G.},
      author={Morini, Massimiliano},
       title={A second order minimality condition for the {M}umford-{S}hah
  functional},
        date={2008},
     journal={Calc. Var. Partial Differential Equations},
      volume={33},
       pages={37\ndash 74},
}

\bib{ES}{article}{
      author={Escher, Joachim},
      author={Simonett, Gieri},
       title={The volume preserving mean curvature flow near spheres},
        date={1998},
     journal={Proc. Amer. Math. Soc.},
      volume={126},
       pages={2789\ndash 2796},
}

\bib{FJM}{article}{
      author={Fusco, Nicola},
      author={Julin, Vesa},
      author={Morini, Massimiliano},
       title={The surface diffusion flow with elasticity in three dimensions},
       date={2020},
      journal={Arch. Ration. Mech. Anal.},
      volume={237},
       pages={1325 \ndash 1382},
}

\bib{Ga}{article}{
      author={Gage, Michael},
      title={On an area-preserving evolution equation for plane curves},
      date={1986},
      journal={Contemp. Math.},
      volume={51},
       pages={51\ndash 62},
}

\bib{GT}{book}{
      author={Gilbarg, David},
      author={Trudinger, Neil~S.},
       title={Elliptic partial differential equations of second order},
     edition={Reprint of the 1998 Edition},
      series={Classics in Mathematics},
   publisher={Springer-Verlag},
     address={Berlin},
        date={2001},
}

\bib{Ha}{article}{
      author={Hamilton, Richard~S.},
       title={Three-manifolds with positive {R}icci curvature},
        date={1982},
     journal={J. Differential Geom.},
      volume={17},
       pages={255\ndash 306},
}

\bib{He}{book}{
      author={Hebey, Emmanuel},
       title={Nonlinear analysis on manifolds: Sobolev spaces and
  inequalities},
      series={Courant Lecture Notes in Mathematics},
   publisher={American Mathematical Society},
     address={Providence},
        date={2000},
}

\bib{Hu}{article}{
      author={Huisken, Gerhard},
       title={The volume preserving mean curvature flow},
        date={1987},
     journal={J. Reine Angew. Math.},
      volume={382},
       pages={35\ndash 48},
}

\bib{Li}{article}{
      author={Li, Haozhao},
       title={The volume-preserving mean curvature flow in {E}uclidean space},
        date={2009},
     journal={Pacific J. Math.},
      volume={243},
       pages={331\ndash 355},
}

\bib{MS}{article}{
      author={Mayer, Uwe~F.},
      author={Simonett, Gieri},
      title= {Self-intersections for the surface diffusion and the
              volume-preserving mean curvature flow},
      date={2000}  
      journal={Differential Integral Equations},
     volume = {13},
      pages = {1189\ndash1199},
}

\bib{Ro}{article}{
      author={Ross, Marty},
       title={Schwarz' {$P$} and {$D$} surfaces are stable},
        date={1992},
     journal={Differential Geom. Appl.},
      volume={2},
       pages={179\ndash 195},
}

\bib{We}{article}{
      author={Wente, Henry~C.},
       title={A note on the stability theorem of {J}. {L}. {B}arbosa and {M}.
  {D}o {C}armo for closed surfaces of constant mean curvature},
        date={1991},
     journal={Pacific J. Math.},
      volume={147},
       pages={375\ndash 379},
}

\end{biblist}
\end{bibdiv}

\end{document}